\documentclass{amsart}
\usepackage[margin=1.2in]{geometry}
\usepackage{amssymb}
\usepackage{amscd}
\usepackage[all]{xy}
\usepackage{bbm}
\usepackage{mathrsfs}
\usepackage{enumerate}
\usepackage{tikz}
\usepackage{stmaryrd}
\usepackage{etoolbox}
\usepackage{array}
\usepackage{verbatim}
\usepackage{hyperref}
\usepackage{stmaryrd}
\usepackage{bbding}

\newcommand{\N}{\mathbb{N}}
\newcommand{\Z}{\mathbb{Z}}

\newcommand{\R}{\mathbb{R}}

\newcommand{\eps}{\varepsilon}
\newcommand{\inter}{\operatorname{int}}

\newcommand{\del}{\nabla}

\newcommand{\lap}{\Delta}

\newcommand{\bd}{\partial}

\newcommand{\diam}{\operatorname{diam}}

\newcommand{\dist}{\operatorname{dist}}

\newcommand{\grad}{\del}
\newcommand{\vol}{\operatorname{Vol}}

\newcommand{\area}{\operatorname{Area}}

\newcommand{\ric}{\operatorname{Ric}}

\theoremstyle{plain}
\newtheorem{theorem}{Theorem}
 
\newtheorem{corollary}[theorem]{Corollary}
\newtheorem{prop}[theorem]{Proposition}
\newtheorem{lemma}[theorem]{Lemma}
\newtheorem{conj}[theorem]{Conjecture}

\newtheorem{question}[theorem]{Question}

\theoremstyle{definition}
\newtheorem{defn}[theorem]{Definition}
\newtheorem{rem}[theorem]{Remark}

\author{Liam Mazurowski}
\address{Department of Mathematics, Lehigh University, Bethlehem, Pennsylvania, 18015, United States}
\email{lim624@lehigh.edu}

\author{Xuan Yao}
\address{Department of Mathematics, Princeton University, Princeton, NJ 08540}
\email{xy1216@princeton.edu}

\title{Scalar curvature under weak limits of manifolds}

\begin{document}

\begin{abstract}
    We show that scalar curvature lower bounds are preserved under certain weak convergence of smooth three manifolds to a smooth limit. More precisely, suppose that $M_k$ and $M$ are smooth, closed, Riemannian three manifolds. Assume that there are smooth, surjective, $\lambda_k$-Lipschitz maps $f_k\colon M_k \to M$ and that $\vol(M_k)\to \vol(M)$ and $\lambda_k\to 1$.  Then if each $M_k$ has scalar curvature bounded below by $\kappa$ so does $M$. This result answers questions of Gromov, Sormani, Allen, and others. The proof relies on a delicate comparison between $\mu$-bubbles in $M_k$ and $\mu$-bubbles in $M$. 
\end{abstract}

\maketitle

\section{Introduction}

In Riemannian geometry, one often studies a sequence of manifolds $M_k$ converging to a limiting space $M$, and would like to  understand which geometric properties of $M_k$ are inheritied by $M$. In particular, it is an important problem to understand whether  curvature lower bounds persist under such limits. For sectional curvature and Ricci curvature, satisfying answers to this problem can be given using the theory of Alexandrov spaces \cite{burago1992ad} and CD spaces \cite{lott2009ricci,sturm2006geometry1,sturm2006geometry2} respectively. However, the situation for scalar curvature remains elusive.

In \cite{gromov2014dirac,gromov2019four}, Gromov formulated and proved that scalar curvature lower bounds are preserved under $C^0$-convergence of metric tensors.  Bamler \cite{bamler2016ricci} gave an alternative proof of this result based on Ricci flow;  also see \cite{mazurowski2026quantification} for an approach based on harmonic functions. It is natural to ask whether this convergence condition can be further weakened. Several negative results in this direction are known.  For example, scalar curvature lower bounds are not preserved under Gromov-Hausdorff convergence or intrinsic flat convergence; see \cite[Section 6.8]{gromov2019four}. Likewise,  scalar curvature lower bounds are not preserved under uniform convergence of distance functions \cite{lee2024metric}. In this paper, we give a positive result showing that scalar curvature lower bounds are preserved under certain weak convergence of manifolds. Our main theorem is the following. 

\begin{theorem}
\label{theorem:Improved}
    Let $(M_k,g_k)_{k\in \N}$ and $(M,g)$ be smooth, closed three manifolds. Assume that there are smooth, surjective, $\lambda_k$-Lipschitz  maps $f_k\colon M_k \to M$ and suppose that $\vol(M_k)\to \vol(M)$ and $\lambda_k\to 1$. If all the manifolds $M_k$ have scalar curvature at least $\kappa$ then so does $M$. 
\end{theorem}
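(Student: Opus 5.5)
We argue by contradiction. Suppose $R_g(p) < \kappa - 4\delta$ at some $p\in M$ for some $\delta>0$. Then on a small geodesic ball $B = B_r(p)$ we have $R_g < \kappa - 3\delta$. Following the strategy indicated in the abstract, we produce a quantitative obstruction from $\mu$-bubbles. Set $h$ to be a function on an annular region $\Omega = B_{r}(p)\setminus \overline{B_{r/2}(p)}$ (or more conveniently a thin band around a hypersurface through $p$) that blows up to $\pm\infty$ at the two boundary components and satisfies
\[
  \tfrac{1}{2}(\kappa - 2\delta) + \tfrac{3}{4}h^2 - |\nabla h| \ \text{sign-adjusted so that } \ \tfrac12 R + \tfrac34 h^2 - |\nabla h| \le -c\delta \text{ on a region in } M,
\]
while in $M_k$ the pulled-back function $h\circ f_k$ satisfies the usual stability inequality $\tfrac12 R_{g_k} + \tfrac34 h^2 - |\nabla (h\circ f_k)| \ge \tfrac12\kappa - \tfrac12\delta$ approximately, since $|\nabla(h\circ f_k)|\le \lambda_k|\nabla h|$. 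In $M_k$ one minimizes the brane functional $\mathcal A_k(\Omega') = \area(\partial\Omega') - \int (\chi_{\Omega'} - \chi_{\Omega_0}) h\circ f_k$ over Caccioppoli sets in $f_k^{-1}(\Omega)$; existence and regularity hold in dimension three. The stability of the minimizer $\Sigma_k$ together with the Gauss–Bonnet argument of Gromov/Zhu gives control on each component of $\Sigma_k$: in particular an area bound of the form $\area(\Sigma_k) \le 8\pi/(\kappa-\delta)$ componentwise when $\kappa>0$, or more generally a lower bound on $\mathcal A_k(\Sigma_k)$ in terms of $\kappa$.

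The heart of the argument is comparing the minimal value of $\mathcal A_k$ with the minimal value of the corresponding functional $\mathcal A$ in $M$. Two inequalities are needed. First, pushing forward: since $f_k$ is $\lambda_k$-Lipschitz, $\area(f_k(\Sigma_k))\le \lambda_k^2\area(\Sigma_k)$, and the push-forward of the region $\Omega_k'$ gives a competitor in $M$; the bulk term is controlled because the volume condition $\vol(M_k)\to\vol(M)$ with $\lambda_k\to1$ forces $f_k$ to be almost volume preserving and almost injective in measure: the Jacobian $J f_k \le \lambda_k^3$ and $\int_{M_k} Jf_k \ge \vol(M)$ by surjectivity plus area formula, so $\int_{M_k} |Jf_k - 1| \to 0$ and the multiplicity function $N(y)=\#f_k^{-1}(y)$ satisfies $\int_M (N-1)\to 0$. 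Second, pulling back: a minimizer $\Omega'$ in $M$ pulls back to $f_k^{-1}(\Omega')$, and by the coarea formula and Lipschitz bounds one must show $\area(\partial f_k^{-1}(\Omega')) \lesssim \area(\partial\Omega')+o(1)$ for a suitable perturbation of $\Omega'$ (choose a generic level set among nearby parallel hypersurfaces and average). Combining, $\min \mathcal A_k \to \min \mathcal A$. Then the sign assumption $R_g<\kappa-3\delta$ lets us construct, via a second-variation / explicit deformation in $M$, a competitor in $M$ whose $\mathcal A$-value is strictly smaller by an amount $c(\delta,r)>0$ than the lower bound forced on $\min \mathcal A_k$ by the scalar curvature bound $R_{g_k}\ge\kappa$, giving the contradiction.

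The main obstacle is the pull-back comparison: bounding the area of $f_k^{-1}(\partial \Omega')$ by the area of $\partial\Omega'$. Lipschitz maps only give the wrong-direction inequality for preimages, so one must exploit that $f_k$ is almost an isometry in measure: $Jf_k\to1$ in $L^1$ and $|df_k|\le\lambda_k$ together force $df_k$ to be close to an orthogonal map on most of $M_k$ (all singular values are $\le\lambda_k$ and their product is near $1$), hence the normal Jacobian along a generic level set is near $1$. I would first try a coarea argument over a one-parameter family of level sets $\{\rho = t\}$ in $M$, averaging over $t$ so that the bad set (where $df_k$ is far from orthogonal or where multiplicity exceeds one) contributes $o(1)$ area for most $t$; this localizes the comparison to an integral estimate and is where the three-dimensionality (regularity of $\mu$-bubbles and the Gauss–Bonnet step) enters.
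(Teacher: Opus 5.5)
\textbf{There is a genuine gap: the step that is supposed to produce the contradiction has no mechanism.}

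Your comparison of minimal values matches the first half of the paper. You push forward a minimizer of $\mathcal A_k$ (Lipschitz area bound, $J_{f_k}\to 1$ and excess multiplicity $\to 0$ in $L^1$). You pull back a model minimizer of $\mathcal A$ along a generic level set chosen by coarea and averaging.

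The trouble is the \emph{lower bound on $\min\mathcal A_k$ forced by $R_{g_k}\ge\kappa$}: the stability inequality supplies no such bound. With test function $1$ and Gauss--Bonnet it gives only $\int_\Gamma \frac{R}{2}+\frac34 h^2+\nu(h)\,da\le 4\pi$ for each connected component $\Gamma$ of the bubble. That constrains second-order data of $\Gamma$, not the value of the functional, and for $\kappa\le 0$ it does not even bound area. Your own comparison shows $\min\mathcal A_k\to\min\mathcal A$ using only the Lipschitz and volume hypotheses, so the minimal values are blind to scalar curvature. A band around $p$ also carries no topology that positivity of $\frac12R+\frac34h^2-|\nabla h|$ could obstruct. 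Scalar curvature therefore has to enter through the second variation of the bubbles as they converge, not through functional values.

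\textbf{The missing idea: make the model bubble an equality case and compare second variations through a weak limit.}
\begin{enumerate}
\item \emph{Rescaling.} The paper rescales near $p$ so that on the distance spheres $\Sigma_t$, $1<t<3$, the quantities $|\mathring A|^2$ and $|\nabla H|+\nu(H)$ are $O(r^4)$. This is negligible against the scalar curvature gap $\sim cr^2$.
\item \emph{Choice of $h$.} It takes $h=H+f(d)$, where $H$ is the mean curvature of the $\Sigma_t$ and $f$ vanishes to second order at $\tau$ and blows up at the ends. Then $\Sigma=\Sigma_\tau$ is the unique, quantitatively strict minimizer (by the divergence theorem with $\nabla d$). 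It is also degenerate stable with Jacobi field $1$, so $\int_\Sigma \frac{R_g}{2}+\frac{|\mathring A|^2}{2}+\frac34h^2+\nu(h)\,da=4\pi$ exactly.
\item \emph{Varifold convergence.} Convergence of minimal values plus quantitative uniqueness gives $|\partial (f_k)_\sharp\Omega_k|\to|\Sigma|$ as varifolds and $\area(\partial\Omega_k)\to\area(\Sigma)$.
\item \emph{A single component.} One extracts a connected component $\Gamma_k$ whose pushforward converges to $\Sigma$ as a varifold. Connectedness is what makes the bound $4\pi$ available.
\item \emph{Lower semicontinuity.} Using $|\nabla(h\circ f_k)|\le|\nabla h|\circ f_k$ and the nonnegativity of $\frac34h^2-|\nabla h|$ near $\Sigma$ and near the ends, one gets $\liminf_k\int_{\Gamma_k}(\frac34h^2-|\nabla h|)\circ f_k\,da_k\ge\int_\Sigma\frac34h^2-|\nabla h|\,da$.
\item \emph{Contradiction.} Since $R_{g_k}\ge\kappa$ exceeds $R_g$ by a fixed gap near $p$, the stability inequality on $\Gamma_k$ then reads $4\pi\ge 4\pi+c$.
\end{enumerate}

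\textbf{Smaller points in your value comparison.}
\begin{itemize}
\item Because $h$ is unbounded at the ends, the bulk error terms are not small by $L^1$ estimates alone. The paper gets only a one-sided bound, from the favorable signs of the integrands outside a compact annulus; this includes a parity count of preimages inside $\Sigma_a$.
\item You need the mod $2$ pushforward, which is a legitimate competitor because $f_k$ has degree one mod $2$.
\item The pull-back step is easier than you fear. Surjectivity gives $\area(f_k^{-1}(\Sigma_t))\ge\area(\Sigma_t)$. Coarea for the $1$-Lipschitz function $d\circ f_k$ gives $\int_1^3\area(f_k^{-1}(\Sigma_t))\,dt\le\vol(f_k^{-1}\{1<d<3\})\to\int_1^3\area(\Sigma_t)\,dt$. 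So no near-orthogonality of $df_k$ is needed.
\end{itemize}
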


This theorem answers questions of Gromov \cite{gromov2019four}, Sormani et al.\ \cite{sormani2023conjectures}, and Allen \cite{allen2024oberwolfach}. See the next section for further discussion. 

\subsection{Discussion} There are many problems and conjectures regarding the preservation of scalar curvature lower bounds.   We refer to \cite{gromov2019four} and \cite{sormani2023conjectures} for nice surveys on this topic. 
As mentioned above, scalar curvature lower bounds are not necessarily preserved under intrinsic flat convergence.  In  four lectures on scalar curvature, Gromov \cite[Section 6.8]{gromov2019four} proposed the following stronger forms of intrinsic flat convergence.

\begin{defn}
    A Riemannian $(\alpha,\beta)$-cobordism between closed oriented Riemannian $n$-manifolds $(X_1,g_1)$ and $(X_2,g_2)$ is an oriented Riemannian $(n+1)$-manifold $(W,g)$, 
with oriented boundary $\partial W=X_1-X_2$, such that
\begin{gather*}
d_{X_i}(p,q) = d_W(p,q), \quad \text{for all } p,q\in X_i
\end{gather*}
and
\[
\dist_{\text{Hau}}(X_1,X_2)\leq \alpha,\quad \vol(W)\leq \beta.
\]
We write $W_{\alpha,\beta}\colon X_1\to X_2$ to indicate such a cobordism. 
\end{defn}

\begin{defn}
A Riemannian $(\alpha,\beta,\lambda)$-cobordism between $X_1$ and $X_2$ is an $(\alpha,\beta)$-cobordism with a $\lambda$-Lipschitz retraction $W\to X_2\subset W$. Again we write $W_{\alpha,\beta,\lambda}\colon X_1\to X_2$ to indicate such a cobordism. 
\end{defn}

Gromov \cite[Section 6.8]{gromov2019four}  conjectured that convergence via $(\alpha,\beta)$- or $(\alpha,\beta,\lambda)$-cobordisms  should preserve scalar curvature lower bounds. 

\begin{conj}[Gromov, $(\alpha,\beta)$-convergence]\label{conjecture: Gromov} 
   Assume there is a sequence of $(\alpha_i,\beta_i)$-cobordisms $W_{\alpha_i,\beta_i}\colon X_i \to X$ with $\alpha_i,\beta_i\to 0$.  If each $X_i$ has non-negative scalar curvature then so does $X$.  

\end{conj}

\begin{conj}[Gromov, $(\alpha,\beta,\lambda)$-convergence]\label{conj: gromov alph beta lambda}
 Assume there is a sequence of $(\alpha_i,\beta_i,\lambda_i)$-cobordisms $W_{\alpha_i,\beta_i,\lambda_i}\colon X_i \to X$ with $\beta_i \to 0$ and $\lambda_i \to 1$. If each $X_i$ has non-negative scalar curvature then so does $X$.  
\end{conj}

\begin{rem}
    Gromov also notes that ``possibly the validity of these conjectures needs additional conditions on $X_i$, e.g. the convergence of the volumes $\vol(X_i)\to \vol(X)$.''
\end{rem}

Assume that $W_{\alpha_i,\beta_i,\lambda_i}\colon X_i\to X$ is a sequence of $(\alpha_i,\beta_i,\lambda_i)$ cobordisms with $\beta_i\to 0$ and $\lambda_i\to 1$. We note that in this setting the retraction maps $X_i\to X$ must have degree one. Therefore, if we assume in addition that $\vol(X_i)\to \vol(X)$ then all the assumptions of Theorem \ref{theorem:Improved} are satisfied.  Hence Theorem \ref{theorem:Improved} implies that Conjecture \ref{conj: gromov alph beta lambda} is true under the extra volume convergence assumption suggested by Gromov.

In \cite{allen2024volume}, Allen-Perales-Sormani proposed a  another strengthening of intrinsic flat convergence, namely the volume above distance below convergence.  This volume above distance below convergence has proven very useful for studying geometric stability problems; see for example \cite{allen2021almost},\cite{allen2025stability},\cite{cabrera2020stability}. 

\begin{defn} Let $M$ be a closed manifold and let $(g_k)_{k\in \N}$ and $g$ be metrics on $M$. Assume that \begin{itemize}
    \item[(i)] $g_k \ge g$,
    \item[(ii)] there is a constant $D>0$ such that $\diam(M,g_k)\le D$ for all $k$, and 
\item[(iii)] $\vol(M,g_k)\to \vol(M,g)$. 
\end{itemize} 
Then $(M,g_k)$ converges to $(M,g)$ in the volume above distance below sense. 
\end{defn}

Sormani et al.\ \cite[Question 4.2 and Remark 4.5]{sormani2023conjectures} asked whether scalar curvature lower bounds are preserved under the volume above distance below convergence.  This question was later reiterated by Allen \cite[Question 6.3]{allen2024oberwolfach}.  

\begin{question}
[Sormani et al., Allen] \label{question:sormani} Let $M$ be a closed three manifold. Assume that $(M,g_k)$ converges to $(M,g)$ in the volume above distance below sense. If each manifold $(M,g_k)$ has non-negative scalar curvature, does $(M,g)$ also have non-negative scalar curvature?  
\end{question}

Theorem \ref{theorem:Improved} shows that the answer to Question \ref{question:sormani} is yes. In fact, the diameter bound (ii) plays no role in this result. More importantly, we can also allow for convergence $M_k \to M$ where the manifolds $M_k$ are not diffeomorphic to $M$.

\subsection{Sketch of Proof}  The proof of Theorem \ref{theorem:Improved} relies on a delicate comparison between $\mu$-bubbles in $M$ and $\mu$-bubbles in $M_k$. One proceeds by contradiction. Suppose that each $M_k$ has scalar curvature at least $\kappa$, but $M$ has  scalar curvature less than $\kappa$ at some point $p$.  We  select a very small geodesic ball $\Omega_0 =  B(p,r)$ in $M$. We assume $r$ is small enough that $\Sigma := \bd \Omega_0$ is nearly round and every point in $\Omega_0$ has scalar curvature less than $\kappa$ in the $g$ metric.  

Next, we define a function $h$ localized near $p$ so that $\Omega_0$ is the unique minimizing $\mu$-bubble for the $\mathcal A^h$ functional in $M$. Then we lift $h$ to a function $h_k$ on $M_k$ using $f_k$, and we show that there are minimizing $\mu$-bubbles $\Omega_k$ in $M_k$ for the $\mathcal A^{h_k}$ functional.  We can arrange that 
\begin{equation}
\label{intro-eq4}
\mathcal A^{h_k}(\Omega_k)\to \mathcal A^h(\Omega_0), \quad \text{as } k\to \infty
\end{equation}
by choosing $r$ and $h$ carefully.
This step relies on both the convergence of the volumes and the fact that $f_k$ is $\lambda_k$-Lipschitz. 

Since $\Omega_0$ is the unique minimizer in $M$, one can exploit \eqref{intro-eq4} to show that there is a connected component $\Gamma_k$ of $\bd \Omega_k$ such that the projection of $\Gamma_k$ to $M$  converges to $\Sigma$.  We note that this convergence takes place in the weak varifold topology, and in principle the projection of $\Gamma_k$ to $M$ may have long thin fingers that extend far from $\Sigma$. This type of behavior cannot happen under the stronger assumption of $C^0$-convergence of metric tensors.  

Nevertheless, the weak varifold convergence is enough to compare the stability inequality on $\Gamma_k$ with the stability inequality on $\Sigma$. More precisely, since $\Sigma$ is nearly round, the stability inequality implies that 
\begin{equation}\label{intro-eq1}
\int_{\Sigma} \frac{R_g}{2} + \frac 3 4 h^2 - \vert \grad h\vert \, da \approx 4\pi.  
\end{equation}
On the other hand, the stability inequality on $\Gamma_k$ gives 
\begin{equation}
    \label{intro-eq2}
\int_{\Gamma_k} \frac{R_{g_k}}{2} + \frac 3 4 h_k^2 + \nu_k(h_k)\, da_k \le 4\pi.  
\end{equation}
Again, assuming $h$ is well-chosen, we can use the varifold convergence to show that 
\begin{equation}
\label{intro-eq3}
\liminf \int_{\Gamma_k} \frac 3 4 h_k^2 + \nu_k(h_k)\, da_k \ge \int_{\Sigma} \frac 3 4 h^2 - \vert \grad h\vert\, da. 
\end{equation}
Finally, we reach a contradiction since equations \eqref{intro-eq1}, \eqref{intro-eq2}, \eqref{intro-eq3} taken together are incompatible with the assumption $R_{g_k}\ge \kappa$ and $R_g < \kappa$. 

\begin{question}
    The proof of Theorem \ref{theorem:Improved} can be specialized to give a $\mu$-bubble proof of Gromov's $C^0$-convergence theorem in dimension three. Is there an inductive $\mu$-bubble proof of the $C^0$-convergence theorem in higher dimensions?
\end{question} 

\begin{question} Along these lines, it would be interesting to know if there is a spectral version of the $C^0$-convergence theorem. For example, if $g_k$ converges to $g$ in $C^0$, does this imply that $\lambda_1(-\lap_g + \frac{R_g}{2}) \ge \limsup \lambda_1(-\lap_{g_k} + \frac{R_{g_k}}{2})$?
\end{question}

\subsection{Organization} The remainder of the paper is organized as follows. First, in Section \ref{section:prelim} we discuss some background material on geometric measure theory, Caccioppoli sets, and stable $\mu$-bubbles. Then in Section \ref{section:main} we give the proof of Theorem \ref{theorem:Improved}.

\subsection{Acknowledgments}
We would like to thank Fernando Cod\'{a} Marques for his suggestion to consider a $\mu$-bubble proof of the $C^0$-convergence theorem. X.Y.\ would also like to thank Brian Allen for an interesting conversation on this topic.

\section{Preliminaries}
\label{section:prelim}

In this section, we discuss some preliminaries on geometric measure theory, Caccioppoli sets, and stable $\mu$-bubbles.  

\subsection{Geometric Measure Theory}

We start by recalling the area and co-area formulas. 

\begin{prop}[Area Formula]
Suppose $f: M\to N$ is a Lipschitz map and $\dim M\leq \dim N$. Then for any measurable subset $A\subset M$ and a bounded, measurable function $h$ on $N$ we have
\[
\int_{A}(h\circ f)J_f\, dv_{M}=\int_{N}h(x)\theta_A(x)\, dv_{N},
\]
where 
\[
J_f(y)=\sqrt{\det(df^{*}(y)\circ df(y))}
\]
is the Jacobian of $f$, $df^*$ is the adjoint transformation of $df$,
and
\[
\theta_A(x):=\#\{y\in A: f(y)=x\}
\]
is the multiplicity function.
\end{prop}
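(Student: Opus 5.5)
This is the classical area formula, and I would prove it by the standard route of Federer and Evans--Gariepy: first linearize the map locally, then reduce to indicator functions.

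\emph{Reduction to indicators.} Both sides are linear and monotone in $h$. It therefore suffices to treat $h=\chi_E$ for a measurable set $E\subset N$, and then pass to bounded measurable $h$ by simple-function approximation and monotone convergence. Replacing $A$ by $A\cap f^{-1}(E)$, the claim becomes
\[
\int_A J_f\,dv_M=\int_N \theta_A\,dv_N \;=\;\int_N \#(A\cap f^{-1}(x))\,d\mathcal H^m(x), \qquad m=\dim M.
\]
Here $dv_N$ should be read as $m$-dimensional Hausdorff measure on $N$; this is the natural reading when $\dim M<\dim N$. Using a countable cover by coordinate charts, we may work in Euclidean charts with bi-Lipschitz constants arbitrarily close to $1$. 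The Riemannian distortions then contribute only factors $(1\pm\epsilon)^m$, which vanish in the limit.

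\emph{Linearization.} By Rademacher's theorem, $f$ is differentiable almost everywhere. Split $A$ into three pieces:
\begin{itemize}
\item a null set where $f$ is not differentiable;
\item the set $Z=\{J_f=0\}$;
\item the set $\{J_f>0\}$.
\end{itemize}
For the null set, Lipschitz maps send $\mathcal H^m$-null sets to null sets, so this piece contributes $0$ to both sides. For $Z$, I would show $\mathcal H^m(f(Z))=0$. The argument is a covering one: near a point of $Z$, the map $f(x)+\epsilon x$ behaves like a linear map whose image of a ball has volume $\lesssim \epsilon r^m$. Hence both sides vanish on $Z$.

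On $\{J_f>0\}$ comes the key step. By a Lusin/Egorov-type argument on the derivative (after approximating $f$ by $C^1$ maps), decompose this set into countably many Borel pieces $A_j$. On each $A_j$, $f$ is injective, and there is a fixed linear map $T_j$ of full rank $m$ such that $f|_{A_j}\circ T_j^{-1}$ is $(1+\epsilon)$-bi-Lipschitz and $|J_f-|\det T_j||<\epsilon$ on $A_j$. For a linear map one has $\mathcal H^m(T(B))=|\det T|\,\mathcal H^m(B)$, with $|\det T|=\sqrt{\det(T^*T)}$. Together with the bi-Lipschitz control, this gives
\[
\mathcal H^m(f(A_j))=(1+O(\epsilon))\int_{A_j}J_f.
\]

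\emph{Conclusion.} Summing over $j$ and using injectivity on each piece gives
\[
\sum_j \mathcal H^m(f(A_j))=\int_N\sum_j\chi_{f(A_j)}=\int_N\theta_{A\setminus Z}.
\]
Letting $\epsilon\to0$ then yields the formula. The main obstacle is the measurable decomposition into pieces on which $f$ is nearly linear and injective. I would take this from the standard proof (Evans--Gariepy, Lemma 3.6) rather than redo it. Measurability of $\theta_A$ also needs a word; it follows from the same decomposition.
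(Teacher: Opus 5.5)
The paper gives no proof here; it recalls the classical area formula, and your outline is the standard Federer/Evans--Gariepy proof, correct as a sketch. Your reading of $dv_N$ as $\mathcal H^m$ when $\dim M<\dim N$ is in fact necessary: with the $n$-dimensional volume of $N$ the right-hand side would vanish, and this reading is also what the paper needs later for $f_k|_{\Gamma_k}\colon \Gamma_k\to M$.

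The one slip is in the step for $Z=\{J_f=0\}$. Covering estimates for the perturbed map $f(x)+\epsilon x$ say nothing about $f(Z)$. There are two standard fixes. You can linearize $f$ itself: at $x\in Z$ the differential has rank $<m$, so $f(B(x,r))$ lies in an $\eta r$-neighborhood of an $(m-1)$-dimensional disc of radius $\operatorname{Lip}(f)\,r$, whose $\mathcal H^m$-content is $\lesssim \eta r^m$, and then you apply Vitali. Alternatively, use the Evans--Gariepy device $g(x)=(f(x),\epsilon x)$, which is injective with $0<J_g\le C\epsilon$ on $Z$; the $J>0$ case together with the $1$-Lipschitz projection $p(y,z)=y$ (note $f=p\circ g$) then gives $\mathcal H^m(f(Z\cap B))\le C\epsilon\,\mathcal L^m(B)$.
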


\begin{prop}[Co-area Formula]
Suppose $f: M\to\mathbb R$ is a real valued Lipschitz function. Then for any $L^1$-integrable function $g$ on $M$, we have
\[
\int_{M}g(x)|\nabla f(x)|\, dv_g=\int_{\mathbb R}\int_{f^{-1}(t)}g(x)\, da_g\, dt.
\]
\end{prop}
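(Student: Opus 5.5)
The plan is to prove the formula first for smooth $f$ and then pass to Lipschitz $f$ by a $C^1$-approximation argument. By partitions of unity and the monotone convergence theorem, it suffices to treat $g\ge 0$ supported in a coordinate chart. Once the formula holds for indicator functions of Borel sets, it extends to nonnegative simple functions and then to $L^1$ functions by splitting $g=g^+-g^-$.

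\textbf{Smooth case.} Let $f$ be $C^1$ and set $U=\{\nabla f\neq 0\}$. On $U$, near any point, the implicit function theorem gives local coordinates $(t,y)$, where $t=f$ and $y$ are coordinates on the level set $f^{-1}(t)$; for instance, flow along $\nabla f/|\nabla f|^2$, whose flow maps $f^{-1}(t_0)$ to $f^{-1}(t_0+s)$. In these coordinates the Riemannian volume decomposes as
\[
dv_g=|\nabla f|^{-1}\,da_g\,dt .
\]
This is because the normal speed of the level sets is $1/|\nabla f|$. Integrating $g|\nabla f|$ then gives the formula for $g$ supported in $U$. On the critical set $C=\{\nabla f=0\}$ the left-hand side vanishes. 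The right-hand side vanishes too: by Sard's theorem (or, for $C^1$ maps to $\R$, the elementary estimate $\mathcal L^1(f(C\cap K))=0$ for compact $K$), almost every $t$ has $f^{-1}(t)\cap C=\emptyset$. Hence
\[
\int_{\R}\mathcal H^{n-1}\bigl(f^{-1}(t)\cap C\bigr)\,dt=0 .
\]
Measurability of $t\mapsto \int_{f^{-1}(t)} g\,da_g$ follows from the local product structure together with this null-set statement.

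\textbf{Lipschitz case.} By Rademacher's theorem $\nabla f$ exists almost everywhere. By the Whitney--Federer $C^1$ Lusin approximation, for each $\eps>0$ there is a $C^1$ function $\tilde f$ and a closed set $F$ with $\vol(M\setminus F)<\eps$ such that $f=\tilde f$ and $\nabla f=\nabla \tilde f$ on $F$. Applying the smooth case to $\tilde f$ and $g\chi_F$ gives the formula on $F$; here I use that $f^{-1}(t)\cap F=\tilde f^{-1}(t)\cap F$. Exhausting $M$ up to a null set by countably many such $F$ reduces everything to one estimate on the leftover null set $N$, namely
\[
\int_{\R}\mathcal H^{n-1}\bigl(f^{-1}(t)\cap N\bigr)\,dt=0 .
\]
The left-hand side of the formula vanishes on $N$ trivially. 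For the right-hand side I would invoke the Eilenberg inequality
\[
\int^*_{\R}\mathcal H^{n-1}\bigl(A\cap f^{-1}(t)\bigr)\,dt\le C_n\,\mathrm{Lip}(f)\,\mathcal H^n(A),
\]
valid for every $A\subset M$. To prove it, cover $A$ by small balls $B_i$ with $\sum \omega_n r_i^n\lesssim \mathcal H^n(A)+\delta$. Each $B_i$ contributes at most $C r_i^{n-1}$ to the $\delta$-approximate measure of $f^{-1}(t)\cap A$, and only for $t$ in an interval of length at most $2\,\mathrm{Lip}(f)\,r_i$. Integrating in $t$ and applying Fatou's lemma as $\delta\to 0$ gives the inequality.

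\textbf{Main obstacle.} I expect the hardest step to be this Eilenberg estimate, together with the accompanying measurability of $t\mapsto\int_{f^{-1}(t)}g\,da_g$ for merely Lipschitz $f$. Everything else is local linear algebra plus approximation. Since the paper only needs the formula as a standard tool, one could alternatively cite Federer 3.2.12 at that step.
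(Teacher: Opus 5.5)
The paper gives no proof of this proposition. It only recalls the co-area formula as a standard tool. Your outline is the usual textbook route: Rademacher, $C^1$ Lusin approximation, change of variables on $\{\grad f\neq 0\}$, and Eilenberg's inequality for the leftover null set. Most of it is sound, but the critical-set step fails as you justify it.

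You state the smooth case for $f\in C^1$, and you have to, because the Whitney--Federer approximation only produces $C^1$ functions $\tilde f$. For $C^1$ functions, however, Sard's theorem is not available: for $f\colon M^n\to\R$ it needs $f\in C^n$. Your ``elementary estimate'' $\mathcal L^1(f(C\cap K))=0$ is also false once $n\ge 2$. Whitney's classical example is a $C^1$ function $w$ on $\R^2$ that is non-constant along an arc of critical points, so its critical values fill an interval; $w(x_1,x_2)$ does the same in dimension three. Nor can you absorb this into the leftover null set $N$, because $F\cap\{\grad\tilde f=0\}$ may have positive volume.

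The statement you actually need, $\int_\R\mathcal H^{n-1}(\tilde f^{-1}(t)\cap C)\,dt=0$, is true, and your own Eilenberg estimate supplies it. Its covering proof really gives the bound with $\mathrm{Lip}(f|_A)$ in place of $\mathrm{Lip}(f)$, since the covering sets may be taken inside $A$. Given $\eps>0$ and a compact $K$, continuity of $\grad\tilde f$ gives $\rho>0$ with $|\grad\tilde f|<\eps$ on the $\rho$-neighborhood of $C\cap K$. Cut $C\cap K$ into finitely many disjoint Borel pieces $A_i$ of diameter less than $\rho$. A minimizing geodesic between two points of $A_i$ stays in that neighborhood, so $\mathrm{Lip}(\tilde f|_{A_i})\le\eps$, and hence
\[
\int^*_{\R}\mathcal H^{n-1}\bigl(C\cap K\cap\tilde f^{-1}(t)\bigr)\,dt\le C_n\,\eps\,\vol(C\cap K).
\]
Letting $\eps\to 0$ finishes the step. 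Alternatively, Evans--Gariepy apply the non-degenerate case to $(x,s)\mapsto f(x)+\eps s$ on $M\times\R$.

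A smaller point in the same step: for $C^1$ $f$ the field $\grad f/|\grad f|^2$ is only continuous, so its flow need not be well defined. Use the implicit-function coordinates $(t,y)$ with $t=f$ directly instead. There $\det G=\det G_{yy}/g^{tt}$ and $g^{tt}=|\grad f|^2$, which give $dv_g=|\grad f|^{-1}\,da_g\,dt$.
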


Next, we discuss currents and varifolds.  Although we will primarily work with currents and varifolds associated to smooth submanifolds, the language of geometric measure theory is quite useful for discussing convergence and limiting behavior.  It is also the natural setting for defining the pushforward associated to a Lipschitz map, which we will need repeatedly later. 

\newcommand{\M}{\mathbf M}

\subsection{Currents} Let $M^3$ be a closed Riemannian manifold.  For $k=2,3$, we denote by $I_k(M,\Z_2)$ the space of flat chains in $M$ with coefficients in $\Z_2$ for which $\mathbf M(T) + \mathbf M(\bd T) < \infty$.  Here $\mathbf M$ denotes the mass functional, and we use $\|T\|$ to denote the mass measure associated to $T$.   Let $\mathcal Z_2(M,\Z_2)$ denote the set of $T\in I_2(M,\Z_2)$ for which $T = \bd S$ for some $S\in I_3(M,\Z_2)$.   For $k=2$, we will only work with elements in $\mathcal Z_2(M,\Z_2)$.  

We equip the spaces $I_3(M,\Z_2)$ and $\mathcal Z_2(M,\Z_2)$ with the flat topology.  In the top dimension, this is equivalent to the mass topology, and we have 
\[
S_k \to S \text{ in } I_3(M,\Z_2) \iff \M(S_k-S)\to 0. 
\]
For two dimensional cycles, the flat topology satisfies 
\[
T_k\to T \text{ in } \mathcal Z_2(M,\Z_2) \iff \text{ there are $S_k \in I_3(M,\Z_2)$ with $T_k - T = \bd S_k$ and $\M(S_k) \to 0$}. 
\]
The mass is lower semi-continuous in the flat topology. 

\begin{prop}
If $T_k \to T$ in $\mathcal Z_2(M,\Z_2)$ then $\M(T) \le \liminf \M(T_k)$. 
\end{prop}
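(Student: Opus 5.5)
To prove that $\M(T)\le \liminf \M(T_k)$, I would use the dual characterization of mass for flat chains with $\Z_2$ coefficients: mass is the supremum of a family of functionals, each continuous in the flat topology. Lower semicontinuity then follows because a supremum of continuous functions is lower semicontinuous. However, for $\Z_2$ coefficients the usual duality with differential forms is not directly available. So I plan to argue through the definition of the flat norm and the fact that, in the flat chain literature, mass is defined as the lower semicontinuous relaxation. Concretely, after passing to a subsequence I may assume $\liminf \M(T_k)=\lim \M(T_k)<\infty$. By the characterization stated above, I also have $T_k-T=\bd S_k$ with $\M(S_k)\to 0$.

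The main route I would take is the following. Following Fleming's theory of flat chains mod $2$, the mass of a flat chain $T$ is defined as
\[
\M(T)=\inf\Bigl\{\liminf_{j\to\infty} \M(P_j) : P_j \text{ polyhedral}, \ P_j\to T \text{ in flat norm}\Bigr\}.
\]
This quantity is lower semicontinuous by a diagonal argument. For each $k$, choose polyhedral chains $P_{k}$ with flat distance $\mathcal F(P_{k}-T_k)<1/k$ and $\M(P_{k})\le \M(T_k)+1/k$. Then $\mathcal F(P_k - T)\le \mathcal F(P_k-T_k)+\M(S_k)\to 0$, because the flat norm of $\bd S_k$ is at most $\M(S_k)$. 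Therefore $\M(T)\le \liminf \M(P_k)=\liminf \M(T_k)$.

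Alternatively, if I want to work with the rectifiable representation, I would view $T_k$ as a mod $2$ rectifiable cycle and use the slicing/density characterization. In the cases relevant to this paper, I could also reduce to the top-dimensional statement, which is more elementary: if $T_k=\bd \Omega_k$ for Caccioppoli sets $\Omega_k$, then $\M(T_k)$ equals the perimeter of $\Omega_k$. Flat convergence means $\mathbf 1_{\Omega_k}\to \mathbf 1_{\Omega}$ in $L^1$, and perimeter is lower semicontinuous under $L^1$ convergence, being a supremum over smooth vector fields $X$ with $|X|\le 1$ of $\int_{\Omega}\operatorname{div}X$, each of which is $L^1$-continuous.

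I expect the main obstacle to be the justification that the polyhedral approximation (or relaxation) definition of mass agrees with the Hausdorff-measure mass $\|T\|(M)$ for rectifiable mod $2$ chains. This is a deep theorem of Fleming and Federer, and I would quote it rather than reprove it.
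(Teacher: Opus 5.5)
The paper gives no proof of this proposition; it is stated as a standard fact from the theory of flat chains mod $2$. Your main argument is correct and is the standard proof. If $\mathbf{M}$ is defined as the flat-lower-semicontinuous relaxation of polyhedral mass (for a Riemannian $M$, one works in $\mathbb{R}^N$ via an isometric embedding, as in Almgren--Pitts), then two ingredients give the inequality at once:
\begin{itemize}
\item the diagonal choice of the polyhedral chains $P_k$;
\item the bound $\mathcal F(T_k-T)\le\mathbf{M}(S_k)$.
\end{itemize}
The Fleming--Federer identification of this relaxed mass with $\mathcal H^2$ is what lets you read the result as a statement about area, and quoting it is appropriate.

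Your second route is less of a special case than you suggest. The paper \emph{defines} $\mathcal Z_2(M,\Z_2)$ as the boundaries $\bd S$ with $S\in I_3(M,\Z_2)$, that is, boundaries of Caccioppoli sets. So the perimeter argument proves the full statement as posed. It needs only $\mathbf{M}(\bd\llbracket\Omega\rrbracket)=\mathcal H^2(\bd^*\Omega)=P(\Omega)$, which the paper already records, and it avoids the Fleming--Federer identification entirely.

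One step there needs correcting. Flat convergence does not give $\chi_{\Omega_k}\to\chi_{\Omega}$ in $L^1$ for an arbitrary $\Omega_k$ with $\bd\llbracket\Omega_k\rrbracket=T_k$. The cycle $T_k$ determines $\Omega_k$ only up to complementation on each component of $M$, so the representatives could, for instance, alternate between a set and its complement. Instead, fix $\Omega$ with $\bd\llbracket\Omega\rrbracket=T$ and let $\Omega_k$ be the Caccioppoli set corresponding to $\llbracket\Omega\rrbracket+S_k$. Then $\bd\llbracket\Omega_k\rrbracket=T+\bd S_k=T_k$ and $\vol(\Omega_k\operatorname{\Delta}\Omega)=\mathbf{M}(S_k)\to 0$. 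Lower semicontinuity of perimeter under $L^1$ convergence then finishes the proof.
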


The space $\mathcal Z_2(M,\Z_2)$ has nice compactness properties. 

\begin{theorem}
Assume that $T_k \in \mathcal Z_2(M,\Z_2)$ and that $\M(T_k)$ is uniformly bounded. Then some subsequence of $T_k$ converges to a limit in $\mathcal Z_2(M,\Z_2)$. 
\end{theorem}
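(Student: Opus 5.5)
The statement is the compactness theorem for $\mathcal Z_2(M,\Z_2)$ with uniformly bounded mass. The plan is to reduce it to the Federer--Fleming compactness theorem for integral (here mod $2$) flat chains, and then check that the limit still lies in $\mathcal Z_2(M,\Z_2)$, i.e.\ that it bounds a $3$-chain of finite mass.

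\emph{Step 1: the boundary condition is free.} Each $T_k\in\mathcal Z_2(M,\Z_2)$ satisfies $T_k=\bd S_k$, so $\bd T_k=0$. Hence $\M(T_k)+\M(\bd T_k)=\M(T_k)$ is uniformly bounded.

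\emph{Step 2: extract a limit.} Since $M$ is closed, we may embed it isometrically in some $\R^N$. The supports of the $T_k$ then lie in the fixed compact set $M$. By Federer--Fleming compactness for flat chains mod $2$, with the Lipschitz retraction of a tubular neighborhood onto $M$ used to keep everything in $M$, a subsequence converges in the flat topology to some $T\in I_2(M,\Z_2)$. Lower semicontinuity gives $\M(T)\le\liminf\M(T_k)$, and $\bd T=0$ because $\bd$ is continuous in the flat topology.

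\emph{Step 3: the limit is a boundary.} This is the step that needs care. Choose $S_k$ with $\bd S_k=T_k$. Two consecutive cycles in the subsequence are flat close, so by the isoperimetric/flat-norm characterization $T_{k+1}-T_k=\bd R_k+Q_k$ with $\M(R_k)+\M(Q_k)$ small. Here $Q_k$ is a $2$-chain that is itself a small-mass cycle.

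On the closed $3$-manifold $M$, a $2$-cycle of small mass bounds a $3$-chain of small mass (isoperimetric inequality in $M$). So we can absorb $Q_k$ and get $T_{k+1}-T_k=\bd R_k'$ with $\M(R_k')$ small. Passing to a further subsequence, we may assume $\sum_k\M(R_k')<\infty$. Then
\[
S:=S_1+\sum_k R_k'
\]
converges in mass in $I_3(M,\Z_2)$ (the mass and flat topologies agree in top dimension). Continuity of $\bd$ gives $\bd S=\lim \bd\bigl(S_1+\sum_{j<k}R_j'\bigr)=\lim T_k=T$. Hence $T\in\mathcal Z_2(M,\Z_2)$. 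This also recovers the characterization of flat convergence quoted before the statement: the $3$-chains $\sum_{j\ge k}R_j'$ have mass tending to zero and bound $T_k-T$.

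\emph{Main obstacle.} The delicate point is the isoperimetric fact used in Step 3: small-mass $2$-cycles in $M$ bound small-mass $3$-chains, with no topological obstruction from $H_2(M,\Z_2)$. This holds because a cycle of sufficiently small mass is null-homologous, by the deformation theorem.

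Alternatively, one can avoid this. In top dimension, $I_3(M,\Z_2)$ is just Caccioppoli sets, and $\M(S_k)\le\vol(M)$ automatically. Compactness of BV functions with bounded perimeter $\M(\bd S_k)=\M(T_k)$ then yields $S_k\to S$ in $L^1$, i.e.\ in mass, along a subsequence. Hence $T_k=\bd S_k\to\bd S$ in flat norm. I would present this route, since it avoids the deformation theorem entirely.
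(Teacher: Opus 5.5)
The paper gives no proof of this statement. It is quoted as standard background (Federer--Fleming/Fleming compactness for mod $2$ flat chains, specialized to boundaries in a closed $3$-manifold). So the only thing in the paper to measure against is its convention that convergence in $\mathcal Z_2(M,\Z_2)$ means $T_k-T=\bd S_k'$ with $\mathbf M(S_k')\to 0$.

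Your argument is correct, and both of your routes reach that conclusion. In the first route you correctly identify the real issue: a flat limit of boundaries must itself bound a finite-mass $3$-chain. Your telescoping construction handles this. Two small points:
\begin{itemize}
\item Choose the subsequence with summable flat distances (say $\mathcal F(T_{k+1}-T_k)\le 2^{-k}$) before building the $R_k'$, not after.
\item The isoperimetric fillings of the $Q_k$ have supports near $M$, so they can be retracted into $M$.
\end{itemize}
This route is general (any dimension and codimension), but it needs the deformation theorem and the ambient embedding.

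The second route is the better fit here. Write $T_k=\bd\llbracket\Omega_k\rrbracket$ with $\Omega_k$ Caccioppoli. BV compactness gives $\chi_{\Omega_k}\to\chi_\Omega$ in $L^1$ along a subsequence. Then $T_k-\bd\llbracket\Omega\rrbracket=\bd\llbracket\Omega_k\operatorname{\Delta}\Omega\rrbracket$ with $\vol(\Omega_k\operatorname{\Delta}\Omega)\to 0$. This is exactly the paper's notion of convergence, and no isoperimetric inequality is needed.

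The one point you should state explicitly is that the limit really lies in $\mathcal Z_2(M,\Z_2)$. The $L^1$ limit is a characteristic function (pass to an a.e.\ convergent subsequence). By lower semicontinuity of perimeter, $\mathcal H^2(\bd^*\Omega)\le\liminf\mathbf M(T_k)<\infty$, so $\llbracket\Omega\rrbracket\in I_3(M,\Z_2)$. This route uses only the codimension-one, top-dimensional structure that the paper relies on throughout.
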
 

A Lipschitz map $f\colon M\to N$ naturally induces a pushforward map $f_\sharp\colon  I_k(M,\Z_2) \to I_k(N,\Z_2)$ and this map satisfies the estimate 
\begin{equation}
\label{eq:lip}
\M(f_\sharp T) \le \operatorname{Lip}(f)^k \M(T). 
\end{equation} 
We also note that pushforward commutes with boundary: $\bd (f_\sharp T) = f_\sharp(\bd T)$.  

\newcommand{\lb}{\llbracket}
\newcommand{\rb}{\rrbracket}

Every smooth submanifold $\Sigma^k$ of $M$ determines an associated element $\lb \Sigma\rb \in I_k(M,\Z_2)$.   This element satisfies $\M(\lb\Sigma\rb) = \mathcal H^k(\Sigma)$ where $\mathcal H^k$ is the $k$-dimensional Hausdorff measure on $M$.  We also have 
\[
\| \, \lb\Sigma\rb\, \|(U) = \mathcal H^k(\Sigma \cap U)
\]
for every measurable $U\subset M$. 
Finally, we recall the following constancy theorem. 

\begin{theorem}
Assume that $\Sigma^2$ is a smooth, closed submanifold of $M$. Assume that $T\in \mathcal Z_2(M,\Z_2)$ has support contained in $\Sigma$. Then either $T = 0$ or $T = \lb\Sigma\rb$. 
\end{theorem}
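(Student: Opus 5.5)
The constancy theorem will be proved by a local argument followed by a connectedness argument, using the local structure of flat $\Z_2$ chains near a smooth surface. Since $\Sigma$ is compact, we may cover it by finitely many small open sets $U$ in which there are coordinates $\phi\colon U\to (-1,1)^3$ with $\phi(\Sigma\cap U) = (-1,1)^2\times\{0\}$. The key local claim is the following. If $T\in \mathcal Z_2(M,\Z_2)$ has $\operatorname{spt} T\subset \Sigma$, then in each such chart the restriction $T\llcorner U$ is either $0$ or $\lb \Sigma\cap U\rb$. Once the local claim is established, the set of points of $\Sigma$ near which $T$ agrees with $\lb\Sigma\rb$ is open in $\Sigma$, and so is the set near which $T$ vanishes. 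These two sets are disjoint and together cover $\Sigma$. So on each connected component of $\Sigma$, $T$ is either $0$ or $\lb\Sigma\rb$. The statement is meant with $\Sigma$ connected; otherwise one gets the analogous statement componentwise, which is the form actually used later.

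For the local claim, I will push $T\llcorner U$ forward by $\phi$ and work in $\R^3$ with $\Z_2$ coefficients. Since $T=\bd S$ for some $S\in I_3$, we have $\bd T = 0$, and hence $\bd(T\llcorner U)$ has support in $\bd U$. I will take the current to be supported in the plane $P=\R^2\times\{0\}$. By the standard structure result for flat chains with support in an affine plane (Federer 4.1.15, or the $\Z_2$ version due to Fleming), a flat $\Z_2$ chain of dimension $2$ supported in a $2$-plane with finite mass is of the form $\lb P\rb\llcorner \theta$, where $\theta\colon P\to\Z_2$ is measurable. Concretely, one can obtain this by applying the projection onto $P$. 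The orthogonal projection $\pi$ fixes $P$ pointwise, so $\pi_\sharp T = T$, and chains in the top dimension of $P$ are given by $L^1$ functions with values in $\Z_2$.

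It remains to show that $\theta$ is constant, and this uses $\bd T=0$ inside the cube. For every test $1$-form $\omega$ compactly supported in $(-1,1)^2$, we get $\int\theta\, d\omega = 0$. In other words, the distributional gradient of $\theta$ vanishes, read mod $2$. The cleanest way to handle this is to lift $\theta$ to an integer-valued function with values in $\{0,1\}$ and use the mod-$2$ boundary-rectifiability characterization. Then $\bd\lb P\rb\llcorner\theta$ is, mod $2$, the reduced boundary of the set $\{\theta=1\}$. If this boundary vanishes, then the perimeter of $\{\theta=1\}$ in the open square is zero. By the standard fact that a Caccioppoli set with zero perimeter in a connected open set has measure $0$ or full measure, $\theta$ is a.e. constant. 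This gives $T\llcorner U\in\{0,\lb\Sigma\cap U\rb\}$.

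The main obstacle is this local structure step. It requires justifying that a $\Z_2$ flat chain supported in a smooth surface is given by integration of a $\Z_2$-valued density, and that $\bd T=0$ translates into zero perimeter. Diffeomorphism invariance under $\phi$ lets us reduce to a plane. There I will rely on Federer's and Fleming's results on flat chains supported in submanifolds rather than reproving them. Combining the two open sets via the connectedness of $\Sigma$ then finishes the proof.
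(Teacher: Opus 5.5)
The paper does not prove this statement. It quotes it as the standard constancy theorem for mod $2$ flat chains and uses it once, for the connected geodesic sphere $\Sigma=\Sigma_\tau$. Your sketch is the standard proof and is correct, modulo the structure results you cite:
\begin{enumerate}[(i)]
\item flatten $\Sigma$ in charts;
\item identify $T$ locally with a $\Z_2$-valued density $\theta$ on a planar square;
\item convert $\partial T=0$ into zero relative perimeter of $E=\{\theta=1\}$;
\item propagate by connectedness.
\end{enumerate}

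Your remark about connectedness is a genuine correction to the statement as printed. Suppose $\Sigma=\Sigma_1\sqcup\Sigma_2$ with each component bounding, for instance two disjoint small spheres. Then $T=\llbracket\Sigma_1\rrbracket\in\mathcal Z_2(M,\Z_2)$ is supported in $\Sigma$ but is neither $0$ nor $\llbracket\Sigma\rrbracket$. Since the paper only applies the theorem to a sphere, nothing downstream is affected.

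Two points in your write-up should be tightened.
\begin{itemize}
\item The sentence ``$\int\theta\,d\omega=0$'' has no meaning for $\Z_2$ chains. For the $\{0,1\}$-valued integer lift, you know a priori only that $\partial\llbracket E\rrbracket$ vanishes mod $2$, not as a real current. The step that actually carries the argument is the one you give next: the mod $2$ mass of $\partial\llbracket E\rrbracket$ in the open square equals the perimeter of $E$ there, including when either is infinite. This is exactly the correspondence between Caccioppoli sets and $I_n(\cdot,\Z_2)$ that the paper recalls, now applied in the plane. Zero perimeter on the connected square then forces $\theta$ to be a.e.\ constant.
\item $T\llcorner U$ need not be compactly supported in $U$. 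So take each chart $\phi$ defined on a neighborhood of $\overline U$, or shrink the cubes, before pushing forward. Localizing $\partial T=0$ to the interior of the square uses $\mathbf M(T)<\infty$, so that restriction to Borel sets is defined and $\partial(T\llcorner U)$ is supported in $\partial U$.
\end{itemize}
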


\subsection{Varifolds} Let $M^3$ be a closed Riemannian manifold.  We let $\mathcal V(M)$ denote the space of two-dimensional varifolds in $M$, i.e., Radon measures on the Grassmannian $\operatorname{Gr}_2(M)$.  We write $\|V\|$ for the mass measure associated to a varifold $V$.   Convergence $V_k\to V$ is determined by the natural topology on the space of Radon measures. In other words, we have  
\[
V_k \to V \iff V_k(f) \to V(f)
\]
for all continuous, compactly supported functions $f$ on $\operatorname{Gr}_2(M)$. 
We note that convergence of varifolds preserves the mass: if $V_k \to V$ then $\|V_k\|(M) \to \|V\|(M)$. Again one has a compactness theorem. 

\begin{theorem}
Assume that $V_k \in \mathcal V(M)$ and $\|V_k\|(M)$ is uniformly bounded. Then some subsequence converges to a limit in $\mathcal V(M)$. 
\end{theorem}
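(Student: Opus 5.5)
The plan is to deduce the statement from weak-$*$ compactness of bounded sets of Radon measures, using that the Grassmannian is compact and its space of continuous functions is separable. First I note that $M$ is closed, so the Grassmann bundle $\operatorname{Gr}_2(M)$ is a compact metrizable space: it is a fiber bundle over the compact base $M$ with compact fibers $\operatorname{Gr}_2(\R^3)$. Hence every continuous function on $\operatorname{Gr}_2(M)$ is compactly supported. Moreover, the space $C(\operatorname{Gr}_2(M))$ with the sup norm is separable. I would fix a countable dense subset $\{\phi_j\}_{j\in\N}$.

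Next I would run a diagonal argument. For each $j$ we have
\[
|V_k(\phi_j)|\le \|V_k\|(M)\,\sup|\phi_j|\le C\sup|\phi_j|,
\]
since the total mass of $V_k$ as a measure on $\operatorname{Gr}_2(M)$ equals $\|V_k\|(M)$. So each sequence of numbers $V_k(\phi_j)$ is bounded and has a convergent subsequence. Extracting successively for $j=1,2,\dots$ and taking the diagonal subsequence, I would obtain a subsequence, still written $V_k$, such that $V_k(\phi_j)$ converges for every $j$.

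Then I extend the limit to all of $C(\operatorname{Gr}_2(M))$. For arbitrary continuous $\phi$ and any $\eps>0$, choose $\phi_j$ with $\sup|\phi-\phi_j|<\eps$. The uniform mass bound gives
\[
|V_k(\phi)-V_l(\phi)|\le 2C\eps+|V_k(\phi_j)-V_l(\phi_j)|,
\]
so $V_k(\phi)$ is Cauchy. Define $L(\phi)=\lim V_k(\phi)$. Then $L$ is linear, positive (each $V_k$ is a positive measure), and bounded with $|L(\phi)|\le C\sup|\phi|$. By the Riesz representation theorem on the compact space $\operatorname{Gr}_2(M)$, there is a Radon measure $V$ with $V(\phi)=L(\phi)$, that is, a varifold $V\in\mathcal V(M)$ with $V_k\to V$.

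There is no serious obstacle here. The only points needing care are checking that $\operatorname{Gr}_2(M)$ is compact, which uses that $M$ is closed, and checking that the mass bound $\|V_k\|(M)\le C$ is exactly the bound on the total variation of $V_k$ as a measure on $\operatorname{Gr}_2(M)$. Alternatively, I could cite Banach--Alaoglu in the dual of $C(\operatorname{Gr}_2(M))$, whose closed ball is weak-$*$ sequentially compact by separability.
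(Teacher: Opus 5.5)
Your argument is correct. It is the standard weak-$*$ compactness proof for Radon measures on the compact metrizable space $\operatorname{Gr}_2(M)$: separability of $C(\operatorname{Gr}_2(M))$, a diagonal subsequence, and the Riesz representation theorem, together with the correct identification of $\|V_k\|(M)$ with the total mass $V_k(\operatorname{Gr}_2(M))$. The paper states this theorem without proof as a standard background fact, so your route is the one it implicitly relies on.
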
 

We note that any $T \in \mathcal Z_2(M,\Z_2)$ determines a varifold which we denote $\vert T\vert$. If $T_k \to T$ and $\vert T_k\vert \to V$ then one has an inequality $\|T\| \le \|V\|$. A smooth submanifold $\Sigma^2$ in $M$ also determines a varifold which we denote by $\vert \Sigma\vert$. We note that $\vert \Sigma\vert = \vert \, \lb\Sigma\rb \, \vert$. In this case, the mass measure $\|\, \vert \Sigma\vert\, \|$ is just the Hausdorff measure $\mathcal H^2$ restricted to $\Sigma$. 

Finally, we note the following criterion for varifold convergence; see for example \cite[Section 2.1 (18)(f)]{pitts2014existence}. 

\begin{prop}
\label{prop:area-converges}
    Assume that $T_k\to T$ in $\mathcal Z_2(M,\Z_2)$. Then $\vert T_k\vert \to \vert T\vert$ as varifolds if and only if $\M(T_k)\to \M(T)$. 
\end{prop}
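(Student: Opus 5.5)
The forward implication is immediate, and for the converse I plan a subsequence argument: identify every subsequential varifold limit with $\vert T\vert$, first at the level of mass measures and then at the level of tangent planes.

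\emph{Forward direction.} Since $M$ is closed, $\operatorname{Gr}_2(M)$ is compact and the constant function $1$ is an admissible test function. Hence $\vert T_k\vert\to\vert T\vert$ gives
\[
\M(T_k)=\|\,\vert T_k\vert\,\|(M)\to\|\,\vert T\vert\,\|(M)=\M(T).
\]

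\emph{Converse: masses.} Assume $\M(T_k)\to\M(T)$. Then the masses are uniformly bounded, so by the varifold compactness theorem every subsequence has a further subsequence with $\vert T_k\vert\to V$ for some $V\in\mathcal V(M)$. It suffices to show $V=\vert T\vert$ for every such limit; uniqueness of the limit then yields convergence of the full sequence.

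By the inequality recalled above, $\|T\|\le\|V\|$. Moreover $\|V\|(M)=\lim\M(T_k)=\M(T)=\|T\|(M)$. A nonnegative measure $\|V\|-\|T\|$ with total mass zero vanishes, so $\|V\|=\|T\|$ as Radon measures on $M$. Disintegrating, we may write $V=\|T\|\otimes\nu_x$, where $\nu_x$ is a probability measure on $\operatorname{Gr}_2(T_xM)$. It remains to show that $\nu_x=\delta_{P_x}$ for $\|T\|$-a.e.\ $x$, where $P_x$ is the approximate tangent plane of the rectifiable current $T$.

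\emph{Converse: tangent planes.} This is the main obstacle: mass convergence by itself does not control tangent planes, so the flat convergence must be used again, locally. Fix a point $x$ at which $T$ has density one and approximate tangent plane $P=P_x$. Work in normal coordinates on a small ball $B_r(x)$, and let $p$ be the orthogonal projection onto $P$. For a.e.\ $r$, slicing theory together with the flat convergence $T_k-T=\bd S_k$, $\M(S_k)\to0$, lets us choose radii with $T_k\llcorner B_r\to T\llcorner B_r$ in the flat topology, up to boundary pieces of small mass. Lower semicontinuity of mass under flat convergence, applied to the pushforwards $p_\sharp(T_k\llcorner B_r)$, then gives
\[
\liminf_k \int_{B_r} J_P(S)\,dV_k(y,S)\;\ge\;\liminf_k \M\bigl(p_\sharp(T_k\llcorner B_r)\bigr)\;\ge\;\M\bigl(p_\sharp(T\llcorner B_r)\bigr)\;\ge\;(1-\varepsilon)\,\|T\|(B_r).
\]
Here $V_k=\vert T_k\vert$ and $J_P(S)\le1$ is the Jacobian of $p$ restricted to the plane $S$, with equality iff $S=P$. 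The last inequality is the density and tangent-plane property of $T$ at $x$, valid for $r$ small.

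Passing to the limit (again with generic radii, so that $\|V\|(\bd B_r)=0$) yields
\[
\int_{B_r}J_P\,dV\ge(1-\varepsilon)\|T\|(B_r)=(1-\varepsilon)\|V\|(B_r).
\]
Thus most of the $V$-mass in $B_r$ sits on planes close to $P$. Letting $\varepsilon\to0$ and $r\to0$, the Radon–Nikodym and Lebesgue differentiation theorems give $\int J_{P_x}\,d\nu_x=1$. Since $J_{P_x}(S)<1$ for $S\ne P_x$, this forces $\nu_x=\delta_{P_x}$ for $\|T\|$-a.e.\ $x$, hence $V=\vert T\vert$.
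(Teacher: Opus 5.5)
The paper gives no proof of this proposition; it quotes it from Pitts [Section 2.1 (18)(f)]. Your argument is therefore an independent, self-contained route, and its overall structure is sound.

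\emph{Comparison of routes.} The first half is clean: $\|T\|\le\|V\|$ together with $\|V\|(M)=\M(T)$ forces $\|V\|=\|T\|$. For the tangent planes, projecting onto $P_x$ is a sensible substitute for the usual oriented argument (Reshetnyak continuity applied to $\vec T\,\|T\|$), which has no $\Z_2$ analogue. This part is actually easier than you make it. Since $p_\sharp(S_k\llcorner B_r)$ is a $3$-chain in the plane $P$, it is zero. Hence $p_\sharp(T_k\llcorner B_r)$ and $p_\sharp(T\llcorner B_r)$ differ only by $p_\sharp\langle S_k,d,r\rangle$, and they converge in mass, not merely in flat norm.

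\emph{The gap.} The final inequality, $\M(p_\sharp(T\llcorner B_r))\ge(1-\eps)\|T\|(B_r)$, does not follow from $\|T\|$ having density one and approximate tangent plane $P$ at $x$. Mod $2$, the projection can cancel. Here is a counterexample at a single point.
\begin{itemize}
\item In a coordinate ball, tile each dyadic annulus $\{2^{-j-1}\le|z|<2^{-j}\}\subset P$ by a checkerboard of mesh $4^{-j}$.
\item Over each black square, place the boundary of a solid box of height $8^{-j}$.
\item The resulting $T$ lies in $\mathcal Z_2(M,\Z_2)$. At the origin, $\|T\|$ has density one, and the rescalings satisfy $\vert(\eta_{0,r})_\sharp T\vert\to\vert P\vert$, where $\eta_{0,r}(z)=z/r$.
\item Yet every box projects with multiplicity two, so $\M(p_\sharp(T\llcorner B_r))=o(r^2)$.
\end{itemize}

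\emph{The repair.} You only need $\nu_x=\delta_{P_x}$ for $\|T\|$-a.e.\ $x$, so choose $x$ in the full-measure set where the current itself, not just its mass measure, is asymptotically a multiplicity-one plane.
\begin{itemize}
\item Write $T=\lb R\rb$ with $R$ countably $2$-rectifiable, and cover $R$, up to an $\mathcal H^2$-null set, by countably many $C^1$ surfaces $G_j$.
\item Take $x\in R\cap G_j$ at which both $R\setminus G_j$ and $G_j\setminus R$ have $\mathcal H^2$-density zero. This holds at a.e.\ point by the standard density theorems.
\item Then $T\llcorner B_r=\lb G_j\cap B_r\rb+E_r$ with $\M(E_r)=o(r^2)$, and $p$ is injective on $G_j\cap B_r$ with Jacobian near $1$. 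This gives the inequality.
\end{itemize}
This is exactly where the $\Z_2$-current structure of the limit, rather than just $\vert T\vert$, is used, so it has to be argued explicitly.

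\emph{Two smaller points.} First, in normal coordinates $J_P(y,S)\le 1$ holds only up to a factor $1+O(r^2)$ coming from the metric. This is harmless as $r\to0$. Second, in the differentiation step the integrand $y\mapsto\int J_{P_x}(y,S)\,d\nu_y(S)$ depends on the base point $x$. You should apply the Lebesgue differentiation theorem to a countable dense family of planes simultaneously, and use that $J_P$ is Lipschitz in $P$.

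Passing to further subsequences to get good slicing radii is harmless, since the limit $V$ is already fixed. With these adjustments your proof is complete.
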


\subsection{Caccioppoli Sets}

Let $M$ be a smooth, closed Riemannian manifold.  We let $\mathcal C(M)$ be the collection of all Caccioppoli sets in $M$, i.e., subsets $\Omega$ of $M$ such that the characteristic function $\chi_\Omega$ has bounded variation.  There is a natural correspondence between Caccioppoli sets and elements of $I_3(M,\Z_2)$. We write $\lb \Omega\rb $ for the element in $I_3(M,\Z_2)$ determined by this correspondence, and note that $\mathcal H^2(\bd^* \Omega) = \M(\bd \lb \Omega\rb )$ where $\bd^*\Omega$ denotes the reduced boundary of $\Omega$.  We will also write $\vert \bd \Omega\vert$ for the varifold induced by $\bd \lb \Omega \rb$. 

\begin{prop}
    Assume that $f\colon M\to N$ is a smooth map between closed Riemannian manifolds of the same dimension.  Let $\Omega$ be a Caccioppoli set in $M$. Then  
    \[
    \theta_\Omega(x) := \#\{y\in \Omega: f(y)=x\}
    \]
    is a measurable function on $N$ taking integer values almost-everywhere, and the set 
    \[
    f_\sharp \Omega := \{x\in N: \theta_\Omega(x) \text{ is odd}\} 
    \]
    is a Caccioppoli set in $N$. We call $f_\sharp \Omega$ the pushforward of $\Omega$ by the map $f$. We further have that $f_\sharp \lb \Omega\rb  = \lb f_\sharp \Omega\rb $ in $I_3(N,\Z_2)$. 
\end{prop}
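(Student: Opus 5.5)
The plan is to reduce everything to the area formula for the equidimensional smooth map $f$. First, measurability and a.e. integrality of $\theta_\Omega$. Apply the Area Formula with $A=\Omega$ and $h\equiv 1$ (or $h=\chi_E$ for measurable $E\subset N$). This gives
\[
\int_N \theta_\Omega\, dv_N = \int_\Omega J_f\, dv_M \le \operatorname{Lip}(f)^n \vol(M)<\infty ,
\]
so $\theta_\Omega$ is measurable (it is part of the conclusion of the area formula) and integrable, hence finite almost everywhere. An integrable $[0,\infty]$-valued counting function is an integer almost everywhere. We can make this concrete using Sard's theorem. Off the image of the critical set $C=\{J_f=0\}$, which has measure zero, each preimage $f^{-1}(x)$ is a discrete subset of the compact manifold $M$, hence finite. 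Therefore $f_\sharp\Omega=\{\theta_\Omega \text{ odd}\}$ is a measurable set, well defined up to a null set.

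Next, the identification with the current pushforward. Here I would use the local structure of $f$ on the regular set. Cover $M\setminus C$ by countably many open sets $U_i$ on which $f$ is a diffeomorphism onto its image, and partition $M\setminus C$ into disjoint Borel pieces $B_i\subset U_i$. The current $\lb\Omega\rb$ is the $\Z_2$ current of the set $\Omega$. The set $\Omega\cap C$ has $\llbracket\Omega\cap C\rrbracket$ pushed forward to a current supported on $f(C)$, which is null, hence zero. Here I use that for top-dimensional $\Z_2$ currents given by integration over a set, $f_\sharp$ is computed by the area formula with multiplicity. Each remaining piece $\Omega\cap B_i$ is mapped diffeomorphically, so $f_\sharp\lb\Omega\cap B_i\rb=\lb f(\Omega\cap B_i)\rb$. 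Summing mod 2, the multiplicity at $x$ equals $\sum_i \chi_{f(\Omega\cap B_i)}(x)=\theta_\Omega(x)$ mod $2$. This gives
\[
f_\sharp\lb\Omega\rb=\lb f_\sharp\Omega\rb .
\]
The convergence of the sum in mass is justified by the integrability bound above.

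Finally, the Caccioppoli property. Since $\lb\Omega\rb\in I_3(M,\Z_2)$ means $\M(\bd\lb\Omega\rb)=\mathcal H^2(\bd^*\Omega)<\infty$, and pushforward commutes with boundary, we get $\bd\lb f_\sharp\Omega\rb=f_\sharp\bd\lb\Omega\rb$. By \eqref{eq:lip},
\[
\M\bigl(\bd\lb f_\sharp\Omega\rb\bigr)\le \operatorname{Lip}(f)^{n-1}\,\mathcal H^2(\bd^*\Omega)<\infty .
\]
So $\lb f_\sharp\Omega\rb\in I_3(N,\Z_2)$, and by the correspondence between Caccioppoli sets and $I_3$, $f_\sharp\Omega$ is Caccioppoli.

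The main obstacle is the middle step, namely checking rigorously that the abstract flat-chain pushforward agrees with the mod 2 multiplicity set. The critical set is the delicate point there, and the argument handles it by Sard's theorem together with the area formula.
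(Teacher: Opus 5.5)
Your argument is correct. It differs from the paper's mainly in how much is proved by hand.

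\textbf{The paper's route.} The paper disposes of the proposition in one line. It cites the general pushforward formula for rectifiable mod 2 chains under Lipschitz maps \cite[Section 2.3]{white2009currents}, which says directly that $f_\sharp\llbracket\Omega\rrbracket$ is the top-dimensional chain with multiplicity $\theta_\Omega(x)$ mod 2 at $x$. It then invokes the correspondence between Caccioppoli sets and $I_3(N,\Z_2)$. Finiteness of boundary mass comes from \eqref{eq:lip} and $\bd f_\sharp = f_\sharp \bd$, exactly as in your last step.

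\textbf{Your route.} You derive that formula by hand in the equidimensional smooth case:
\begin{enumerate}
\item the area formula gives integrability of $\theta_\Omega$;
\item the critical values form a null set;
\item off that set you split $\Omega$ into countably many pieces on which $f$ is a diffeomorphism and push each forward explicitly;
\item you sum mod 2, with mass convergence controlled by $\int_N\theta_\Omega\,dv_N<\infty$.
\end{enumerate}
This makes the argument self-contained apart from change of variables and basic facts about top-dimensional chains. The cost is that it uses smoothness through the inverse function theorem, whereas the cited formula holds for any Lipschitz map.

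\textbf{Small remarks.}
\begin{itemize}
\item Your parenthetical appeal to ``$f_\sharp$ is computed by the area formula with multiplicity'' is precisely the formula the paper cites. If you allow it, the whole middle step is immediate. Without it, your support argument is enough: $f_\sharp\llbracket\Omega\cap C\rrbracket$ has finite mass and support in the compact null set $f(C)$, so it vanishes.
\item To justify $f_\sharp\llbracket\Omega\cap B_i\rrbracket=\llbracket f(\Omega\cap B_i)\rrbracket$, choose each $U_i$ so that $f$ is a diffeomorphism on a neighborhood of $\overline{U_i}$. Otherwise the support of $\llbracket\Omega\cap B_i\rrbracket$ may reach $\bd U_i$, where $f$ need not be injective.
\item Sard's theorem is not needed. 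The identity $\int_N\theta_C\,dv_N=\int_C J_f\,dv_M=0$ already shows that $f(C)$ is null.
\end{itemize}
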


\begin{proof}
This follows from the pushforward formula for $f_\sharp\lb \Omega\rb $ (see for example \cite[Section 2.3]{white2009currents}) and the correspondence between Caccioppoli sets and flat chains mod 2.  
\end{proof} 

\begin{prop}
\label{prop:area-ineq}
Assume that $f\colon M \to N$ is 1-Lipschitz. Then we have $\mathcal H^2_N( \bd^* (f_\sharp \Omega)) \le \mathcal H^2_M(\bd^* \Omega)$. 
\end{prop}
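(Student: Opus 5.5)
This is a direct consequence of the current-theoretic facts recalled above; we translate the statement into flat chains and apply the mass estimate \eqref{eq:lip}. The preceding proposition gives $\lb f_\sharp \Omega\rb = f_\sharp \lb \Omega\rb$ in $I_3(N,\Z_2)$. Taking boundaries, and using that pushforward commutes with boundary, we get
\[
\bd \lb f_\sharp\Omega\rb = \bd\bigl(f_\sharp\lb\Omega\rb\bigr) = f_\sharp\bigl(\bd\lb\Omega\rb\bigr).
\]
By the identification between Caccioppoli sets and $I_3$ chains recorded above, $\mathcal H^2_N(\bd^*(f_\sharp\Omega)) = \M(\bd\lb f_\sharp\Omega\rb)$ and $\mathcal H^2_M(\bd^*\Omega) = \M(\bd\lb\Omega\rb)$.

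We then apply \eqref{eq:lip} with $k=2$ to the $2$-chain $T=\bd\lb\Omega\rb$, which lies in $I_2(M,\Z_2)$ because $\Omega$ has finite perimeter. This gives
\[
\M\bigl(f_\sharp \bd\lb\Omega\rb\bigr) \le \operatorname{Lip}(f)^2\,\M(\bd\lb\Omega\rb) \le \M(\bd\lb\Omega\rb),
\]
since $\operatorname{Lip}(f)\le 1$. Chaining these equalities and the inequality proves the claim.

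The main point to check is that the mod $2$ chain $f_\sharp\lb\Omega\rb$ really corresponds to the parity set $\{\theta_\Omega \text{ odd}\}$. This is exactly the content of the preceding proposition, which we take as given. The only other subtlety is that \eqref{eq:lip} holds for flat chains mod $2$ with finite mass; this is standard, since the pushforward is defined on rectifiable representatives and the area formula bounds the $2$-dimensional Jacobian of $f$ by $\operatorname{Lip}(f)^2$. If one wanted to avoid currents, an alternative is to approximate $\Omega$ by smooth domains, push forward the smooth boundary surface via the area formula with $J_f\le 1$, and pass to the limit using lower semicontinuity of perimeter. We expect the current-theoretic route to be the cleanest.
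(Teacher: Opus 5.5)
Your proposal is correct and follows the paper's proof essentially verbatim. Both arguments identify the two perimeters with masses of boundaries of the associated mod $2$ chains. Both then use that pushforward of the chain of $\Omega$ is the chain of $f_\sharp\Omega$ and that pushforward commutes with boundary. The conclusion comes from the Lipschitz mass estimate with $k=2$ and $\operatorname{Lip}(f)\le 1$.
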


\begin{proof}
By the correspondence between Caccioppoli sets and flat chains mod 2, it is equivalent to show that 
$
\M(\bd \lb f_\sharp \Omega\rb ) \le \M(\bd \lb \Omega\rb). 
$
Since 
\[
\M(\bd \lb f_\sharp \Omega\rb) = \M(\bd f_\sharp\lb \Omega\rb) = \M(f_\sharp \bd \lb \Omega\rb), 
\]
it is equivalent to show that 
\[
\M(f_\sharp \bd \lb\Omega\rb) \le \M(\bd \lb\Omega\rb). 
\]
But this follows from \eqref{eq:lip} since $f$ is 1-Lipschitz. 
\end{proof}

\begin{prop}
Assume that $f \colon M\to N$ is a Lipschitz map between manifolds of the same dimension. Let $h$ be a bounded function on $N$. Then for any $\Omega \in \mathcal C(M)$ we have
\begin{align*}
&\int_{M} \chi_{\Omega} (h\circ f) \, dv_{M} - \int_{N} \chi_{f_\sharp \Omega} h\, dv_{N} \\
&\qquad = \int_\Omega  (1-J_f) (h\circ f) \, dv_{M} + \int_{N}  (\theta_\Omega - \overline \theta_\Omega )h \, dv_{N}
\end{align*} 
where $\theta_\Omega(x)$ is as above and $\overline \theta_\Omega(x)$ is the mod 2 reduction of $\theta_\Omega$. 
\end{prop}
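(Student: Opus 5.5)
The identity is obtained by adding and subtracting $\int_\Omega J_f\,(h\circ f)\,dv_M$ on the left-hand side and then rewriting that middle term with the Area Formula. Splitting
\[
\int_M \chi_\Omega (h\circ f)\, dv_M = \int_\Omega (1-J_f)(h\circ f)\, dv_M + \int_\Omega J_f\,(h\circ f)\, dv_M ,
\]
we see that the first summand is already the first term on the right-hand side. It therefore remains to show
\[
\int_\Omega J_f\,(h\circ f)\, dv_M - \int_N \chi_{f_\sharp\Omega}\, h\, dv_N = \int_N (\theta_\Omega - \overline\theta_\Omega)\, h\, dv_N .
\]

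For the middle term, the Area Formula with $A=\Omega$ (which applies since $\dim M = \dim N$, $f$ is Lipschitz and $h$ is bounded and measurable) gives
\[
\int_\Omega J_f\,(h\circ f)\, dv_M = \int_N h\,\theta_\Omega\, dv_N .
\]
Here $\theta_\Omega$ is the multiplicity function, so this is exactly the quantity that appears in the definition of $f_\sharp\Omega$.

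By the proposition on pushforwards of Caccioppoli sets, $\theta_\Omega$ is integer valued almost everywhere. Its mod $2$ reduction $\overline\theta_\Omega$, taking values in $\{0,1\}$, therefore agrees almost everywhere with $\chi_{f_\sharp\Omega}$, because $f_\sharp\Omega=\{\theta_\Omega \text{ odd}\}$. Hence $\int_N \chi_{f_\sharp\Omega} h\,dv_N = \int_N \overline\theta_\Omega h\,dv_N$, and subtracting this from the Area Formula identity gives the displayed claim.

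The only point needing care is integrability, so that the manipulations are not of the form $\infty-\infty$. Since $M$ is closed and $f$ is Lipschitz, $J_f$ is bounded and $\Omega$ has finite volume. The Area Formula applied to the constant function $1$ then gives $\int_N\theta_\Omega\,dv_N = \int_\Omega J_f\,dv_M<\infty$, so $\theta_\Omega\in L^1(N)$. Because $h$ is bounded, every integral above is finite and the splitting is legitimate. I expect no real obstacle; the proof is bookkeeping combining the Area Formula with the definition of the pushforward.
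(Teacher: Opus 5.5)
Your proposal is correct and is essentially the paper's proof: the Area Formula rewrites $\int_\Omega J_f\,(h\circ f)\,dv_M$ as $\int_N \theta_\Omega h\,dv_N$, and then you use $\chi_{f_\sharp\Omega}=\overline\theta_\Omega$ almost everywhere. Your integrability check is a harmless addition that the paper leaves implicit; note that your bound $\theta_\Omega\in L^1(N)$ already gives finiteness of $\theta_\Omega$ a.e., so you do not need the pushforward proposition, which the paper states only for smooth $f$.
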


\begin{proof}
The area formula implies that 
\[
\int_\Omega (h\circ f)\, J_f \, dv_M = \int_{f(\Omega)} \theta_\Omega h \, dv_N. 
\]
Also we have $\chi_{f_\sharp \Omega} = \overline \theta$.  The result follows by combining these observations. 
\end{proof}

\begin{prop}
\label{prop:integral-formula}
Let $f\colon M \to N$ be a Lipschitz map between two closed manifolds of the same dimension.  Let $h$ be a smooth function defined on an open subset $U$ of $N$. Let $\Omega$ and $\Omega_0$ be two Caccioppoli sets in $M$. Assume that there is a compact subset $K\subset U$ so that the support of $f_\sharp \Omega \operatorname{\Delta} f_\sharp \Omega_0$ is contained in $K$ and the support of $\Omega \operatorname{\Delta} \Omega_0$ is contained in $f^{-1}(K)$. 
Then we have 
\begin{align*}
&\int_{M} (\chi_\Omega - \chi_{\Omega_0})(h\circ f)\, dv_{M} - \int_{N} (\chi_{f_\sharp \Omega} - \chi_{f_\sharp \Omega_0})h\, dv_{N} \\
&\quad = \int_{M} (\chi_\Omega - \chi_{\Omega_0})(h\circ f) (1-J_f)\, dv_{M} + \int_{N} ((\theta_{\Omega} -  \theta_{\Omega_0}) - (\overline \theta_{\Omega}-\overline \theta_{\Omega_0}))h\, dv_{N}. 
\end{align*}
\end{prop}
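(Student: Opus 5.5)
The plan is to reduce the identity to the area formula applied separately to $\Omega$ and $\Omega_0$, and then subtract. The only subtlety is that $h$ lives on $U$ rather than on all of $N$. Every integrand should vanish outside $f^{-1}(K)$ on $M$, or outside $K$ on $N$, so the identity should make sense after extending $h$.

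\textbf{Step 1: extension of $h$.} Choose a smooth cutoff $\phi$ with compact support in $U$ and $\phi\equiv 1$ on $K$. Set $\tilde h=\phi h$, extended by zero, which is a bounded measurable function on $N$.

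\textbf{Step 2: replacing $h$ by $\tilde h$.} On $M$, the factor $\chi_\Omega-\chi_{\Omega_0}$ is supported in $f^{-1}(K)$, where $h\circ f=\tilde h\circ f$. So both $M$-integrals are unchanged by this replacement. On $N$, the factor $\chi_{f_\sharp\Omega}-\chi_{f_\sharp\Omega_0}$ is supported in $K$, so the same holds there.

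\textbf{Step 3: localizing the multiplicity term.} The last integrand needs more care. Split the integral over $M$ into $\Omega\setminus\Omega_0$ and $\Omega_0\setminus\Omega$, plus the common part $\Omega\cap\Omega_0$. Then
\[
\theta_\Omega-\theta_{\Omega_0}=\theta_{\Omega\setminus\Omega_0}-\theta_{\Omega_0\setminus\Omega}.
\]
Both sets on the right lie in $f^{-1}(K)$, so this function vanishes off $f(f^{-1}(K))\subset K$. Also $\overline\theta_\Omega-\overline\theta_{\Omega_0}=\chi_{f_\sharp\Omega}-\chi_{f_\sharp\Omega_0}$, which is supported in $K$. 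Hence the whole last integrand is supported in $K$, where $h=\tilde h$. This is the step I expect to be the main point to check: the hypotheses should guarantee that every term only sees $h$ on $K$.

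\textbf{Step 4: the identity for $\tilde h$.} Apply the area formula to $\Omega\setminus\Omega_0$ and to $\Omega_0\setminus\Omega$ with $\tilde h$. This gives
\[
\int_M(\chi_\Omega-\chi_{\Omega_0})(\tilde h\circ f)J_f\,dv_M=\int_N(\theta_\Omega-\theta_{\Omega_0})\tilde h\,dv_N.
\]
Working with the differences avoids any integrability issue on the common part. Adding and subtracting $\int_M(\chi_\Omega-\chi_{\Omega_0})(\tilde h\circ f)\,dv_M$ turns this into the $(1-J_f)$ term. Using $\chi_{f_\sharp\Omega}=\overline\theta_\Omega$ and $\chi_{f_\sharp\Omega_0}=\overline\theta_{\Omega_0}$ produces the $\overline\theta$ terms. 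Rearranging yields exactly the stated identity for $\tilde h$, and Steps 2 and 3 then give it for $h$.
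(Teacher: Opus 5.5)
Your proof is correct and follows essentially the same route as the paper: localize so that $h$ is only evaluated on $K$, apply the area formula to the localized pieces, subtract, and use $\chi_{f_\sharp\Omega}=\overline\theta_\Omega$. The differences are cosmetic. The paper localizes via $\Omega\cap f^{-1}(K)$ and $\Omega_0\cap f^{-1}(K)$, whereas you use $\Omega\setminus\Omega_0$, $\Omega_0\setminus\Omega$ and a cutoff extension $\tilde h$. In both versions the identity $\theta_\Omega-\theta_{\Omega_0}=\theta_{\Omega\setminus\Omega_0}-\theta_{\Omega_0\setminus\Omega}$ (resp.\ its analogue) holds only almost everywhere, namely where the multiplicities are finite.
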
 

\begin{proof}
This does not immediately follow from the previous proposition since $h$ may not be defined on all of $N$. However, our assumptions ensure that 
\[
\chi_{\Omega} - \chi_{\Omega_0} = \chi_{\Omega\cap f^{-1}(K)} - \chi_{\Omega_0\cap f^{-1}(K)}.
\]
Applying the area formula, we have 
\begin{gather*}
\int_{\Omega \cap f^{-1}(K)} (h\circ f)\, J_f\, dv_M = \int_{N} \theta_{\Omega\cap f^{-1}(K)} h\, dv_N,\\
\int_{\Omega_0 \cap f^{-1}(K)} (h\circ f)\, J_f\, dv_M = \int_{N} \theta_{\Omega_0\cap f^{-1}(K)} h\, dv_N.
\end{gather*}
Thus we have 
\begin{align*}
&\int_M (\chi_\Omega - \chi_{\Omega_0}) (h\circ f)(1-J_f)\, dv_M \\
&\qquad = \int_M (\chi_\Omega - \chi_{\Omega_0}) (h\circ f)\, dv_M + \int_N (\theta_{\Omega\cap f^{-1}(K)} - \theta_{\Omega_0\cap f^{-1}(K)}) h \, dv_N\\
&\qquad = \int_M (\chi_\Omega - \chi_{\Omega_0}) (h\circ f)\, dv_M + \int_N (\theta_{\Omega} - \theta_{\Omega_0}) h \, dv_N
\end{align*} 
since the fact that $\Omega \operatorname{\Delta} \Omega_0$ is supported in $f^{-1}(K)$ also implies that 
\[
\theta_{\Omega\cap f^{-1}(K)} - \theta_{\Omega_0\cap f^{-1}(K)} = \theta_{\Omega} - \theta_{\Omega_0}
\]
almost everywhere.  Finally, we have $\chi_{f_\sharp \Omega} - \chi_{f_\sharp \Omega_0} = \overline \theta_\Omega - \overline \theta_{\Omega_0}$ so that 
\[
\int_N (\chi_{f_\sharp \Omega} - \chi_{f_\sharp \Omega_0})h\, dv_N = \int_N (\overline \theta_\Omega - \overline \theta_{\Omega_0})h\, dv_N. 
\]
The result now follows by combining the previous equations. 
\end{proof} 

\begin{prop}
\label{prop:multiplicity-bound}
Let $f\colon M\to N$ be a smooth map. Let $\Omega$ be a Caccioppoli set in $M$. Then 
\[
\theta_\Omega(x) - \overline \theta_\Omega(x) \le \theta_M(x) - \overline \theta_M(x)
\]
for almost every $x\in N$.   
\end{prop}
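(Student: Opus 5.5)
The inequality is pointwise in $x$ and purely combinatorial. The plan is to write both sides in terms of integer cardinalities of fibers. Then it reduces to the elementary fact that the function $n\mapsto n-\overline n$, where $\overline n\in\{0,1\}$ is the parity of $n$, is nondecreasing on the nonnegative integers.

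The first step is to fix a full-measure set of good points $x\in N$. Since $f$ is smooth and $M$ is closed, Sard's theorem shows that almost every $x\in N$ is a regular value. By compactness of $M$ and the inverse function theorem, the fiber $f^{-1}(x)$ over such a point is finite. Hence $\theta_M(x)=\#f^{-1}(x)<\infty$ for almost every $x$. (The fibers need not be finite if $\dim M\neq\dim N$, but the statement is only used in the equidimensional setting, and I would restrict to that case.) For such $x$, $\theta_\Omega(x)=\#(f^{-1}(x)\cap\Omega)$ is a nonnegative integer satisfying $\theta_\Omega(x)\le\theta_M(x)$, since $\Omega\subset M$. A Caccioppoli set is only defined up to measure zero, so I would fix a representative. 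Changing the representative on a null set $Z$ only affects $\theta_\Omega$ on $f(Z)$, and $f(Z)$ is null because $f$ is Lipschitz between equidimensional manifolds. The conclusion therefore only needs to hold almost everywhere, which is how it is stated.

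The key step is the monotonicity claim: if $0\le a\le b$ are integers, then $a-\overline a\le b-\overline b$. Note that $n-\overline n=2\lfloor n/2\rfloor$, and $\lfloor\cdot/2\rfloor$ is nondecreasing. Applying this with $a=\theta_\Omega(x)$ and $b=\theta_M(x)$ gives the inequality at every good $x$.

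There is no real obstacle. The only point requiring care is the measure-theoretic bookkeeping in the first step: ensuring the multiplicities are finite almost everywhere and that the choice of representative of $\Omega$ is harmless. Both follow from Sard's theorem and the fact that Lipschitz maps send null sets to null sets.
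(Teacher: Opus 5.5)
Your proof is correct and is the paper's argument: the paper simply notes that $j\mapsto j-(j \bmod 2)$ is nondecreasing on $\N\cup\{0\}$, and applies this to $\theta_\Omega(x)\le\theta_M(x)$. Your additional bookkeeping (finite fibers almost everywhere via Sard's theorem, and the choice of representative) is sound and fills in details the paper leaves implicit; the representative point is not actually needed, since the inequality holds for any representative at every $x$ with finite fiber.
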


\begin{proof}
It suffices to note that $j \mapsto j - (j\text{ mod } 2)$ is a non-decreasing function of $j\in \N \cup\{0\}$. 
\end{proof}

\subsection{Stable \texorpdfstring{$\mu$}{mu}-Bubbles}

Consider a Riemannian manifold $(M^3,g)$ with boundary $\partial M=\partial_-M\sqcup \partial_+ M$, where neither of $\partial_{\pm}M$ is empty. 
Suppose $h$ is a smooth function defined on the interior of $M$ so that $h\to \pm\infty$ as $x\to \partial_{\pm}M$ respectively. 
Given a Caccioppoli set $\Omega_0\subset M$ with smooth boundary and containing  $\partial_{+}M$, consider the $\mu$-bubble functional:
\begin{align*}
    \mathcal A_g^h(\Omega):=\mathcal H^2_g(\partial^*\Omega)-\int_{M}(\chi_{\Omega}-\chi_{\Omega_0})h\, dv_g,
\end{align*}
for all Caccioppoli sets $\Omega\subset M$ with  $\Omega\operatorname{\Delta}\Omega_0\subset\subset \inter(M)$. Here $\bd^*\Omega\subset \inter (M)$ is the reduced boundary of $\Omega$ in $\inter (M)$. We call an $\mathcal A^h_g$-minimizer $\Omega$ in this class a {\em $\mu$-bubble}.

The existence  of such a minimizer among all Caccioppoli sets follows from work of Gromov \cite{gromov2019four} and Zhu \cite{zhu2021width}. Note that  $\mu$-bubbles are also known as prescribed mean curvature surfaces, and one can refer to \cite{zhou2019cmc,zhou2020pmc} for a min-max construction.

\begin{prop}[Gromov \cite{gromov2019four}, Zhu \cite{zhu2021width}]\label{prop: mu-bubble regularity}
There exists a smooth minimizer $\Omega$ for $\mathcal A$ such that $\Omega\operatorname{\Delta}\Omega_0\subset\subset\inter(M). $
\end{prop}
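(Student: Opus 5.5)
The plan follows the standard direct-method-plus-barrier argument for $\mu$-bubbles, with the regularity coming from prescribed mean curvature theory in dimension three.

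\emph{Step 1: reduce to a compact subregion.} Since $h\to\pm\infty$ at $\partial_\pm M$, we can choose a constant $\Lambda$ larger than the supremum of the mean curvatures of the level sets of a distance function near $\partial M$. We then pick small collars $U_\pm$ of $\partial_\pm M$ on which $\pm h > \Lambda$ and which avoid $\partial\Omega_0$. In these collars we compare any competitor $\Omega$ with a modified set: $\Omega\cup U_+$ on the $+$ side and $\Omega\setminus U_-$ on the $-$ side, cut along an equidistant surface $\{d(\cdot,\partial_\pm M)=t\}$. A calibration/divergence-theorem argument uses the unit normal field $\nabla d$, whose divergence is controlled by the mean curvature bound. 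It shows that this cut decreases $\mathcal A^h$: the area added along the level set is at most the area removed plus $\Lambda$ times the volume change, and the bulk term gains more than $\Lambda$ times that volume because $|h|>\Lambda$ there. Hence we may restrict to competitors with $\Omega\operatorname{\Delta}\Omega_0$ contained in a fixed compact $K\subset\inter(M)$.

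\emph{Step 2: existence by the direct method.} On this class $h$ is bounded on $K$, so $\mathcal A^h\ge \mathcal H^2(\partial^*\Omega)-\|h\|_{L^\infty(K)}\vol(K)$ is bounded below. A minimizing sequence therefore has bounded perimeter. By BV compactness (equivalently, compactness in $I_3(M,\mathbb Z_2)$ together with the compactness theorem for $\mathcal Z_2$) a subsequence converges in $L^1$. Lower semicontinuity of mass in the flat topology handles the area term, and $L^1$ convergence together with the boundedness of $h$ on $K$ gives continuity of the bulk term. The limit stays in the admissible class since $K$ is fixed, so it is a minimizer.

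\emph{Step 3: regularity.} A minimizer $\Omega$ is an almost-minimizer of perimeter in the sense $\mathcal H^2(\partial^*\Omega\cap B_r)\le \mathcal H^2(\partial^*\Omega'\cap B_r)+Cr^3$ for any $\Omega'$ agreeing with $\Omega$ outside $B_r$. Standard regularity theory for almost-minimizing boundaries (Tamanini, De Giorgi) then gives $C^{1,\alpha}$ regularity of $\partial^*\Omega$ away from a singular set of Hausdorff dimension at most $n-8<0$, so the set is empty in dimension three. The Euler--Lagrange equation $H=h$ and elliptic bootstrapping upgrade this to smoothness. Finally, replacing $\Omega$ by the open representative gives a smooth set with $\Omega\operatorname{\Delta}\Omega_0\subset\subset\inter(M)$.

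\emph{Main obstacle.} The delicate point is Step 1, the barrier argument. It requires the collars to be chosen so that $\Omega_0$ is unaffected, and it requires verifying the divergence inequality for merely Caccioppoli competitors. For this I would use the slicing/co-area formula to choose a good level $t$ at which the slice of $\partial^*\Omega$ behaves well.
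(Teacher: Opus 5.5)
Your proposal is correct. It is the standard argument behind this result in the cited works of Gromov and Zhu: collar barriers where $|h|$ dominates the mean curvature of the equidistant surfaces, the direct method in a fixed compact region, and Tamanini-type regularity for almost-minimizing boundaries, whose singular set is empty in dimension three. The paper does not reprove the proposition and simply cites those references; the one hypothesis you should state explicitly is that $\Omega_0$ contains a collar of $\partial_+M$ and is disjoint from a collar of $\partial_-M$, since this is what makes your modified sets $(\Omega\cup U_+)\setminus U_-$ admissible competitors.
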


\begin{prop}[Second variation]
    Suppose $\Omega\in\mathcal C(M)$ is a minimizer of the $\mu$-bubble functional $\mathcal A_g^h$. Then $\Sigma=\partial\Omega$ satisfies
    \[
    H_{\Sigma}=h,
    \]
    and for any $\psi\in C^{\infty}(\Sigma)$ we have
    \[
    \int_{\Sigma}-\psi\Delta_{\Sigma}\psi-(\ric(\nu,\nu)+\|A_{\Sigma}\|^2)\psi^2-\langle \nabla h,\nu\rangle\psi^2\, da \geq 0,
    \]
    where $\nu$ is the unit normal of $\Sigma$ pointing out of $\Omega$.
\end{prop}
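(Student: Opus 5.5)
The plan is the usual first- and second-variation computation for the functional $\mathcal A^h_g$ at a smooth minimizer $\Omega$ with boundary $\Sigma = \bd\Omega$. By Proposition \ref{prop: mu-bubble regularity} we may take $\Omega$ smooth. Choose $\psi \in C^\infty(\Sigma)$ and let $X$ be a smooth vector field, compactly supported in $\inter(M)$, that equals $\psi\nu$ along $\Sigma$; here $\nu$ is the outward unit normal. Let $\Phi_t$ be its flow and set $\Omega_t = \Phi_t(\Omega)$. Since $X$ vanishes near $\bd M$, each $\Omega_t \operatorname{\Delta} \Omega_0$ is still compactly contained in $\inter(M)$. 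So $\Omega_t$ is an admissible competitor, and $t\mapsto \mathcal A^h_g(\Omega_t)$ has a minimum at $t=0$. It is convenient to first arrange that $X$ is the normal extension $\psi\nu$ on a tubular neighborhood of $\Sigma$, so that the curves $t\mapsto\Phi_t(x)$ are normal geodesics near $\Sigma$ and no acceleration term appears.

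For the first variation, use the standard formulas
\[
\frac{d}{dt}\Big|_{t=0}\mathcal H^2(\bd\Omega_t) = \int_\Sigma H\psi\, da
\quad\text{and}\quad
\frac{d}{dt}\Big|_{t=0}\int_{\Omega_t} h\, dv = \int_\Sigma h\psi\, da .
\]
The term involving $\Omega_0$ is independent of $t$. This gives
\[
0 = \int_\Sigma (H - h)\psi\, da
\]
for every $\psi$, and hence $H_\Sigma = h$, with the sign convention that $H$ is computed with respect to $\nu$.

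For the second variation, differentiate once more along the normal flow. Write $\Sigma_t = \bd\Omega_t$. The area term is
\[
\frac{d}{dt}\int_{\Sigma_t} H_t\,\psi_t\, da_t ,
\]
and the volume term is
\[
\frac{d}{dt}\int_{\Sigma_t} h\,\psi_t\, da_t .
\]
Use three ingredients:
\begin{enumerate}[(i)]
\item the Jacobi evolution $\frac{d}{dt}H = -\Delta_\Sigma\psi - (\ric(\nu,\nu)+\|A_\Sigma\|^2)\psi$;
\item $\frac{d}{dt}\,da = H\psi\, da$;
\item $\frac{d}{dt}(h\circ\Phi_t) = \psi\langle\nabla h,\nu\rangle$.
\end{enumerate}
The terms coming from $\frac{d}{dt}da$ appear as $H\psi^2$ in the area part and $h\psi^2$ in the volume part. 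Because $H=h$, these cancel exactly. What remains is
\[
0 \le \frac{d^2}{dt^2}\Big|_{t=0}\mathcal A^h_g(\Omega_t) = \int_\Sigma -\psi\Delta_\Sigma\psi - (\ric(\nu,\nu)+\|A_\Sigma\|^2)\psi^2 - \langle\nabla h,\nu\rangle\psi^2\, da ,
\]
which is the claimed inequality.

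The main obstacle is bookkeeping rather than substance. One has to fix the signs consistently, namely the orientation of $\nu$ and the convention for $H$, so that moving outward increases the enclosed $h$-integral. One also has to justify that the $\psi_t$-derivative terms (acceleration terms) drop out. They do so either because the flow is normal-geodesic near $\Sigma$, or because any such term is multiplied by $H-h=0$. If $\Sigma$ is only known to be smooth away from a small singular set, one should first invoke Proposition \ref{prop: mu-bubble regularity}, which gives full smoothness in dimension three.
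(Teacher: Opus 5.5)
Your proposal is correct. The paper gives no proof of this proposition; it quotes it as standard background from Gromov and Zhu. Your first/second variation computation is exactly the standard argument behind it, with the tangential terms and the $\frac{d}{dt}\psi_t$ terms all killed by $H=h$.

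There are two small slips to fix. First, the terms removed via $\frac{d}{dt}da = H\psi\,da$ are $H^2\psi^2$ and $hH\psi^2$, not $H\psi^2$ and $h\psi^2$. Second, the cited Gromov--Zhu result only asserts that a smooth minimizer exists, so smoothness of an arbitrary minimizer should be credited to the standard regularity theory for $\mu$-bubbles in dimension three.
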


We note that applying the stability inequality with the test function $\psi \equiv 1$ and using the Schoen-Yau rearrangement together with the Gauss-Bonnet theorem gives 
\[
\int_{\Sigma} \frac{R_M}{2} + \frac{\| \mathring A_\Sigma\|^2}{2} + \frac 3 4 h^2 + \nu(h)\, da \le 4\pi
\]
provided $\Sigma$ is connected.

\section{Proof of Main Theorem}
\label{section:main}

In this section we prove our main theorem. 

\begin{theorem}
    Let $(M_k,g_k)_{k\in \N}$ and $(M,g)$ be smooth, closed three manifolds. Assume that there are smooth, surjective, $\lambda_k$-Lipschitz maps $f_k\colon M_k \to M$ and suppose that $\vol(M_k)\to \vol(M)$ and $\lambda_k\to 1$. If all the manifolds $M_k$ have scalar curvature at least $\kappa$ then so does $M$.  
\end{theorem}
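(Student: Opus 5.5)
We argue by contradiction, following the sketch in the introduction. Suppose $R_{g_k}\ge\kappa$ for all $k$ but $R_g(p)<\kappa$ for some $p\in M$. Fix $\delta>0$ with $R_g<\kappa-2\delta$ on a small ball around $p$, and choose $r$ small so that $\Sigma=\bd B(p,r)$ is nearly round and $\Omega_0=B(p,r)$ lies in this ball. On an annular region $U=B(p,2r)\setminus \overline{B(p,r/2)}$ we construct $h$, depending only on $\dist(p,\cdot)$, with the following properties:
\begin{itemize}
\item $h$ equals the mean curvature of $\Sigma$ on $\Sigma$;
\item $h\to+\infty$ at the inner boundary and $h\to-\infty$ at the outer boundary;
\item $h$ is chosen steep enough, using strict monotonicity of the spheres' mean curvature in this sign convention, that $\Omega_0$ is the unique $\mathcal A^h_g$-minimizer in $U$.
\end{itemize}
Because $r$ is small, $\frac34 h^2\approx 3/r^2$ dominates, and the second variation with $\psi\equiv1$ combined with Gauss--Bonnet shows that
\[
\int_\Sigma \tfrac{R_g}2+\tfrac34h^2-|\grad h|\,da
\]
is within a small error of $4\pi$. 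This is \eqref{intro-eq1}; the error is controlled relative to $\delta\,\area(\Sigma)$ by choosing $|\grad h|$ on $\Sigma$ carefully.

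Next we lift to $M_k$. Put $U_k=f_k^{-1}(U)$ and $h_k=h\circ f_k$. After perturbing the level sets if needed, $\partial_\pm$ of $U_k$ correspond to the preimages of the two boundary spheres, and Proposition~\ref{prop: mu-bubble regularity} produces a smooth minimizer $\Omega_k$ of $\mathcal A^{h_k}_{g_k}$ relative to $f_k^{-1}(\Omega_0)$. We then want to show that $\mathcal A^{h_k}(\Omega_k)\to\mathcal A^h(\Omega_0)$.

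For the upper bound we test with $f_k^{-1}(\Omega_0)$. By coarea, an average over the level sets $f_k^{-1}(\bd B(p,s))$ has area at most about $\lambda_k^2$ times, up to an error from $1-J_{f_k}$, the area of the corresponding sphere. The volume hypothesis gives this: since $J_{f_k}\le\lambda_k^3$ and $\int J_{f_k}\ge\vol(M)$ by surjectivity (the degree-type bound $\theta\ge1$), we get $\int_{M_k}|1-J_{f_k}|\to0$ and $\int(\theta_{M_k}-1)\to0$. Hence some nearby level set works.

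For the lower bound we push forward: $f_{k\sharp}\Omega_k$ is admissible in $M$. Proposition~\ref{prop:area-ineq}, rescaled by $\lambda_k^2$, controls the area term. Propositions~\ref{prop:integral-formula} and \ref{prop:multiplicity-bound} bound the discrepancy in the volume term by
\[
\int |h|\,|1-J_{f_k}| + \int |h|(\theta_{M_k}-\overline\theta_{M_k}).
\]
Both terms tend to zero provided $h$ is handled near $\pm\infty$, for instance via truncation, using that the minimizers stay in a fixed compact set by the barrier property of $h$.

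Consequently $f_{k\sharp}\Omega_k$ is a minimizing sequence. By uniqueness and compactness, $f_{k\sharp}\bd\lb\Omega_k\rb\to\lb\Sigma\rb$ in flat norm with masses converging, so Proposition~\ref{prop:area-converges} gives varifold convergence. Moreover the area defect $\area(\bd\Omega_k)-\area(f_{k\sharp}\bd\Omega_k)\to0$, which forces the varifolds $f_{k\sharp}|\bd\Omega_k|$ (with multiplicity) to converge to $|\Sigma|$ as well.

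Next we select a component. The constancy theorem shows that exactly one component $\Gamma_k$ of $\bd\Omega_k$ carries the limit $\Sigma$ with multiplicity one, and the other components carry asymptotically zero area. We apply the stability inequality with $\psi\equiv1$ on $\Gamma_k$ to obtain \eqref{intro-eq2}, and use $R_{g_k}\ge\kappa$:
\[
\tfrac\kappa2\area(\Gamma_k)+\int_{\Gamma_k}\tfrac34h_k^2+\nu_k(h_k)\le4\pi.
\]
Along a subsequence $\area(\Gamma_k)\to\area(\Sigma)$.

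The key step is \eqref{intro-eq3}. Here $\nu_k(h_k)=dh(df_k\nu_k)\ge-\lambda_k|\grad h|\circ f_k$. Since $|\grad h|\circ f_k$ and $h_k^2$ are continuous functions pulled back from $M$ and integrated against $|\Gamma_k|$, the pushforward varifold convergence, together with the Jacobian of $f_k|_{\Gamma_k}$ being close to $1$ in $L^1$ because of the area-defect convergence, gives the $\liminf$ inequality. Combining this with \eqref{intro-eq1} yields
\[
\tfrac\kappa2\area(\Sigma)\le\int_\Sigma\tfrac{R_g}2+\text{(small)}<\tfrac{\kappa-2\delta}2\area(\Sigma)+\text{(small)},
\]
which is a contradiction.

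The main obstacle is the energy convergence $\mathcal A^{h_k}(\Omega_k)\to\mathcal A^h(\Omega_0)$. Specifically, one must control the multiplicity excess $\theta_{M_k}-\overline\theta_{M_k}$ integrated against an $h$ that blows up at the boundary of $U$, and also ensure that the lifted test region has nearly the right area. I would first try to confine the minimizers by barriers to $f_k^{-1}(K)$ for a fixed compact $K$, so that only $\sup_K|h|$ enters. Making the error in \eqref{intro-eq1} small relative to $\delta r^2$ also requires choosing $|\grad h|$ on $\Sigma$ small while keeping $\Omega_0$ a strict, unique minimizer.
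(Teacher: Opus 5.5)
Your outline follows the paper's architecture. However, the step you flag as the main obstacle is left open, and the fix you propose does not work.

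\textbf{The energy convergence.} Your two-sided bound by $\int|h|\,|1-J_{f_k}|+\int|h|(\theta_{M_k}-\overline\theta_{M_k})$ requires $\Omega_k\operatorname{\Delta}\Omega^k_0\subset f_k^{-1}(K)$ for a compact $K$ independent of $k$. No such barrier is available in $M_k$. A barrier argument would have to use the leaves $f_k^{-1}(\Sigma_t)$, and their mean curvature in $(M_k,g_k)$ is completely uncontrolled, since $f_k$ is only smooth and $1$-Lipschitz. Proposition~\ref{prop: mu-bubble regularity} gives only a $k$-dependent compact set, and truncating $h$ would remove the barrier that produces $\Omega_k$ in the first place.

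The missing idea is that you need neither confinement nor a two-sided bound. You have $\mathcal A^h_g(\Omega_0)\le\mathcal A^h_g((f_k)_\sharp\Omega_k)\le\mathcal A^{h_k}_{g_k}(\Omega_k)+I_k$, together with your upper bound on $\mathcal A^{h_k}_{g_k}(\Omega_k)$. So it suffices to show $\limsup I_k\le0$ (normalize $\lambda_k=1$ by rescaling $g_k$, so that $1-J_{f_k}\ge0$).
\begin{itemize}
\item On a compact annulus $A=\{a<d<b\}$ where $h$ is bounded, your estimates apply.
\item Inside $\Sigma_a$ one has $\chi_{\Omega_k}-\chi_{\Omega^k_0}\le0$ and $h>0$; outside $\Sigma_b$ one has $\chi_{\Omega_k}-\chi_{\Omega^k_0}\ge0$ and $h<0$. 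So the Jacobian term is $\le0$ outside $A$.
\item For the multiplicity term, almost every point inside $\Sigma_a$ has an odd number $\ell$ of preimages, all lying in $\Omega^k_0$. If $j$ of them lie in $\Omega_k$, then $(j-(j\text{ mod }2))-(\ell-1)\le0$. Outside $\Sigma_b$ the sign is immediate.
\end{itemize}
The same sign bookkeeping is what gives $\int(\chi_{\Omega_k}-\chi_{\Omega^k_0})h_k\,dv_k\to0$, hence $\area(\bd\Omega_k)\to\area(\Sigma)$. Your area-defect and component-selection steps rest on this.

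\textbf{The choice of $h$.} This is also inconsistent as stated.
\begin{itemize}
\item A radial $h$ cannot equal the mean curvature of $\Sigma$, because $H=\frac2r-\frac r3\ric(\theta,\theta)+O(r^2)$ is not constant.
\item $|\grad h|$ on $\Sigma$ cannot be small. Stability with $\psi\equiv1$ forces $\int_\Sigma-\nu(h)\ge\int_\Sigma\ric(\nu,\nu)+|A|^2\approx8\pi$.
\item What must be small (given $h=H$ on $\Sigma$) is $\int_\Sigma|\grad h|+\nu(H)$. Any extra steepness $\nu(H)-\nu(h)>0$ enters this error at full size, so steepness is not what gives uniqueness.
\end{itemize}
The paper takes $h=H+f(d)$ with $f(\tau)=f'(\tau)=0$, $f>0$ inside $\Sigma$ and $f<0$ outside, for instance $f(r)=(\tau-r)^3(r-1)^{-2}(3-r)^{-2}$ after rescaling. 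Uniqueness then follows by calibrating with the distance foliation, which uses only the sign of $h-H$. Since $h=H$ and $\grad h=\grad H$ on $\Sigma$, the Jacobi identity for that foliation gives $\int_\Sigma\frac{R_g}2+\frac{|\mathring A|^2}2+\frac34h^2+\nu(h)=4\pi$ exactly. Hence the error in \eqref{intro-eq1} is $\int_\Sigma\frac{|\mathring A|^2}2+|\grad H|+\nu(H)=O(r^4)$, which is negligible against $\delta r^2$.

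\textbf{The liminf in \eqref{intro-eq3}.} The integrand $\frac34h^2-|\grad h|$ is unbounded, and $\Gamma_k$ may have thin pieces reaching the ends of the annulus: their area tends to zero, but they exist. So varifold convergence alone does not give the $\liminf$. You also need $\frac34h^2-|\grad h|\ge0$ near the ends. This holds for blow-up like $(d-1)^{-2}$, but not for an arbitrary profile.
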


\begin{rem}
    It is easy to see that it suffices to prove the theorem assuming that $\lambda_k = 1$. From here on we will assume that $\lambda_k = 1$ for all $k$. 
\end{rem}

We now make some  preliminary observations.  First we show that the measure of the set of points in $M$ with more than one preimage under $f_k$ goes to 0 as $k\to \infty$. 
Define multiplicity functions $\theta_{M_k}\colon M\to \N$ by $\theta_{M_k}(x) = \# f_k^{-1}(x)$.  In other words, $\theta_{M_k}(x)$ is the number of preimages of $x$ under the map $f_k$. Note that $\theta_{M_k}(x) \ge 1$ for all $k$ and $x$ since we have assumed the maps $f_k$ are surjective. 
Define the sets $P_k = \{x\in M: \theta_{M_k}(x) = 1\}$. 

\begin{prop}
\label{prop:multiplicity}
    We have $\vol_g(P_k)\to \vol_g(M)$ as $k\to \infty$. 
\end{prop}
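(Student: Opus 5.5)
We will apply the Area Formula to $f_k\colon M_k\to M$ with $A=M_k$ and $h\equiv 1$, which gives
\[
\int_{M_k} J_{f_k}\, dv_{g_k} = \int_M \theta_{M_k}\, dv_g .
\]
Since $f_k$ is $1$-Lipschitz (we are assuming $\lambda_k=1$), every singular value of $df_k$ is at most $1$. Hence $J_{f_k}\le 1$ pointwise, and the left side is at most $\vol(M_k)$. On the other side, surjectivity gives $\theta_{M_k}\ge 1$ everywhere, and $\theta_{M_k}\ge 2$ on $M\setminus P_k$. Therefore
\[
\int_M \theta_{M_k}\, dv_g \ge \vol_g(P_k) + 2\bigl(\vol_g(M)-\vol_g(P_k)\bigr) = 2\vol_g(M) - \vol_g(P_k).
\]

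Combining the two bounds gives $2\vol_g(M)-\vol_g(P_k)\le \vol(M_k)$, that is,
\[
\vol_g(M)-\vol_g(P_k)\le \vol(M_k)-\vol_g(M).
\]
The right side tends to $0$ by the volume convergence hypothesis. Since also $\vol_g(P_k)\le\vol_g(M)$, we conclude $\vol_g(P_k)\to\vol_g(M)$.

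If one prefers not to invoke the reduction to $\lambda_k=1$, the same argument with $J_{f_k}\le\lambda_k^3$ gives $\vol_g(M)-\vol_g(P_k)\le \lambda_k^3\vol(M_k)-\vol_g(M)\to 0$.

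The only points requiring care are measurability of $\theta_{M_k}$ and of $P_k$, and the fact that $\theta_{M_k}$ may be infinite on a set. The Area Formula already treats $\theta_{M_k}$ as a measurable function. The set where it is infinite has measure zero, since the left-hand integral is finite. So no real obstacle is expected; the key inputs are simply $J_{f_k}\le 1$ and $\theta_{M_k}\ge 1$ from surjectivity.
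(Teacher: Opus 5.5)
Your proof is correct and follows the paper's argument essentially verbatim: the area formula with $J_{f_k}\le 1$, $\theta_{M_k}\ge 1$ everywhere and $\theta_{M_k}\ge 2$ off $P_k$ gives $\vol_g(M\setminus P_k)\le \vol(M_k)-\vol_g(M)\to 0$. Your extra remarks — handling general $\lambda_k$ via $J_{f_k}\le\lambda_k^3$, and noting that $\{\theta_{M_k}=\infty\}$ is null — are valid additions that the paper leaves implicit.
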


\begin{proof}
    The area formula says that 
    \[
    \int_{M_k} J_k(y)\, dv_k(y) = \int_M \theta_{M_k}(x)\, dv(x),
    \]
    where $J_k$ is the Jacobian of $f_k$. Since $f_k$ is 1-Lipschitz, we have $J_k \le 1$ and it follows that 
    \[
    \int_M \theta_{M_k} \, dv \le \vol_{g_k}(M_k).
    \]
    Since $\theta_{M_k} \ge 2$ on $M-P_k$, this implies that 
    \[
    \vol_g(P_k) + 2\vol_g(M-P_k) \le \vol_{g_k}(M_k).
    \]
    Re-arranging, this says that 
    \[
    \vol_g(M-P_k) \le \vol_{g_k}(M_k)-\vol_g(M). 
    \]
    The right hand side goes to 0 by assumption and therefore we have 
    $
    \vol_g(P_k)\to \vol_g(M).
    $
\end{proof}

This has the following immediate corollary. 

\begin{corollary}
    The degree of $f_k$ is one modulo two for large $k$. In particular, almost every point $x\in M$ has an odd number of preimages under $f_k$. 
\end{corollary}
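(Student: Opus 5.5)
The plan is to combine Proposition \ref{prop:multiplicity} with the fact that, for a smooth map between closed manifolds of the same dimension, the mod two degree can be read off from the preimage count at regular values. First, I would recall that $f_k\colon M_k\to M$ is a smooth map between closed three manifolds. By Sard's theorem, almost every $x\in M$ is a regular value of $f_k$. At such a point the preimage $f_k^{-1}(x)$ is finite, and its cardinality modulo two equals $\deg_2(f_k)$, which is independent of the regular value. Consequently, for almost every $x\in M$, the number $\theta_{M_k}(x)$ is finite and
\[
\theta_{M_k}(x) \equiv \deg_2(f_k) \pmod 2 .
\]
If $M$ is disconnected, I would apply this on each component of $M$ separately; this gives a mod two degree for each component.

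Second, I would use Proposition \ref{prop:multiplicity}, which says $\vol_g(P_k)\to\vol_g(M)$. Fix a component $M'$ of $M$; it has positive volume. For large $k$ this gives $\vol_g(P_k\cap M')>0$. The regular values have full measure, so there is then a regular value $x\in P_k\cap M'$. At that point $\theta_{M_k}(x)=1$, and hence $\deg_2(f_k|_{f_k^{-1}(M')})=1$. Since $M$ is closed it has finitely many components, so a single threshold in $k$ works for all of them simultaneously.

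Finally, combining the two steps: for large $k$ the mod two degree is one on every component. Therefore almost every $x\in M$, namely every regular value, has an odd number of preimages under $f_k$, which is the claim.

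There is no real obstacle here. The only point needing care is to justify that the regular-value preimage count is constant mod $2$, which is the standard homotopy invariance of mod two degree for maps of closed manifolds. One could also argue purely measure-theoretically, using the constancy of the pushforward $f_{k\sharp}\lb M_k\rb$ as a $3$-cycle mod $2$. That route identifies $f_{k\sharp}\lb M_k\rb$ as either $0$ or $\lb M\rb$ on each component, and this chain equals $\lb\{\theta_{M_k}\text{ odd}\}\rb$. Positivity of $\vol(P_k)$ then rules out $0$.
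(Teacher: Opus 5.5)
Your argument is correct and matches the reasoning the paper leaves implicit when it calls this an immediate corollary of Proposition~\ref{prop:multiplicity}. For large $k$ the set $P_k$ has positive measure, so it contains a regular value with exactly one preimage; this forces the mod two degree to be one, and hence every regular value has an odd preimage count. Your componentwise treatment of a possibly disconnected $M$ is extra care that the paper does not spell out.
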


Finally, we note that the volume convergence also holds on any subset of $M$. 

\begin{prop}
\label{prop:subset-volume}
    Let $W$ be a measurable subset of $M$ and let $W_k = f_k^{-1}(W)\subset M_k$. Then $\vol(W_k)\to \vol(W)$ as $k\to \infty$. 
\end{prop}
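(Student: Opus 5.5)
We prove this by a two-sided squeeze built from the area formula, applied separately to $W$ and to its complement $M\setminus W$. As in the proof of Proposition \ref{prop:multiplicity}, we assume $\lambda_k=1$, so the Jacobian $J_k$ of $f_k$ satisfies $J_k\le 1$. We also use that $f_k$ is surjective, so $\theta_{M_k}\ge 1$ everywhere.

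\emph{Lower bound.} Apply the area formula to the measurable set $W_k=f_k^{-1}(W)$ with $h\equiv 1$. The multiplicity function of $W_k$ is $\theta_{W_k}(x)=\theta_{M_k}(x)\chi_W(x)$, which is at least $\chi_W(x)$. Hence
\[
\vol(W_k)\ \ge\ \int_{W_k} J_k\, dv_k\ =\ \int_M \theta_{M_k}\chi_W\, dv\ \ge\ \vol(W).
\]
The same argument applied to the complement gives $\vol(M_k\setminus W_k)\ge \vol(M\setminus W)$, since $M_k\setminus W_k=f_k^{-1}(M\setminus W)$.

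\emph{Upper bound.} Combining the two lower bounds yields
\[
\vol(W_k)\ =\ \vol(M_k)-\vol(M_k\setminus W_k)\ \le\ \vol(M_k)-\vol(M\setminus W).
\]
By hypothesis $\vol(M_k)\to\vol(M)$, so the right-hand side tends to $\vol(M)-\vol(M\setminus W)=\vol(W)$. Therefore
\[
\vol(W)\ \le\ \vol(W_k)\ \le\ \vol(W)+\bigl(\vol(M_k)-\vol(M)\bigr),
\]
and letting $k\to\infty$ gives $\vol(W_k)\to\vol(W)$. Note that this bound shows the error is controlled uniformly in $W$.

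The only step requiring care is the measurability of $W_k$ and of its multiplicity function, which is needed to invoke the area formula. This is immediate because $f_k$ is smooth and $W$ is measurable. There is no substantive obstacle; the key idea is simply to complement the trivial inequality $\vol(W_k)\ge\vol(W)$ with the same inequality for $M\setminus W$, and then use the convergence of the total volumes.
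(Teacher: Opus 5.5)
Your proof is correct and is essentially the paper's argument. The area formula with $J_k\le 1$ and surjectivity gives $\vol(W)\le\vol(W_k)$ and $\vol(M\setminus W)\le\vol(M_k\setminus W_k)$; the paper then notes that these two nonnegative defects sum to $\vol(M_k)-\vol(M)\to 0$. Your rearrangement into $0\le \vol(W_k)-\vol(W)\le \vol(M_k)-\vol(M)$ is the same bound, and it is indeed uniform in $W$.

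The one claim to soften is that measurability of $W_k$ is immediate. Preimages of Lebesgue measurable sets under smooth maps can fail to be measurable, since a smooth map can send a positive-measure critical set injectively onto a null set. So you should take $W$ Borel. This covers every use in the paper, where $W$ is always an open annulus.
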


\begin{proof}
    The area formula implies that 
    \[
\int_{W_k} J_k(y)\, dv_k(y) = \int_{W} \theta_{W_k}(x)\, dx
    \]
    where $J_k$ is the Jacobian of $f_k$ and $\theta_{W_k}$ is the multiplicity function. 
    Since $f_k$ is 1-Lipschitz and surjective, we have $J_k \le 1$ and $\theta_{W_k}\ge 1$ and thus 
    \[
    \vol_g(W) \le \vol_{g_k}(W_k). 
    \]
Note  that 
     $f_k \colon (M_k - W_k) \to (M-W)$ must be surjective. Thus we likewise obtain 
    \[
    \vol_{g}(M-W) \le \vol_{g_k}(M_k -W_k). 
    \]
    Combining these observations we see that \begin{align*}
    0 &\le \left[\vol_{g_k}(W_k) -\vol_g(W)\right] + \left[ \vol_{g_k}(M_k-W_k) - \vol_g(M-W)\right]
    \\
    &= \vol_{g_k}(M_k) - \vol_g(M) \to 0. 
    \end{align*}
    Since both terms in brackets are non-negative, they must both go to 0. 
\end{proof}

We now proceed with the proof of Theorem \ref{theorem:Improved} in earnest. The proof of Theorem \ref{theorem:Improved} is by contradiction.  Suppose that each metric $g_k$ has scalar curvature at least $\kappa$ but there is some point $p\in M$ where the scalar curvature of $g$ is less than $\kappa$.  We need to study the geometry of small geodesic spheres centered at $p$. 

\subsection{Geometry of Small Spheres} 
Let $x$ be a geodesic normal coordinate system for $g$ centered at $p$.  Let $S_r$ be a geodesic sphere of radius $r$ centered at $p$ in the $g$ metric.   We would like to understand the expansion of various geometric quantities on $S_{r} = \{\vert x\vert = r\}$ as $r\to 0$. 

\begin{prop}
The trace free second fundamental form of $S_r$ satisfies $\vert \mathring A\vert^2 = O(r^2)$. 
\end{prop}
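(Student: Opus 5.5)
We prove the estimate by expanding the metric in normal coordinates and computing the second fundamental form of $S_r$ directly. In the geodesic normal coordinates $x$ centered at $p$, the metric satisfies $g_{ij}(x) = \delta_{ij} + O(\vert x\vert^2)$ and $\partial_k g_{ij}(x) = O(\vert x\vert)$. By the Gauss lemma, the unit outward normal to $S_r$ is the radial field $\nu = \partial_r = \frac{x^i}{r}\partial_i$. The second fundamental form is therefore $A = \frac12 \mathcal L_{\partial_r} g\vert_{TS_r}$, and this is what we expand.

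The cleanest route uses polar coordinates $(r,\omega)$, $\omega\in S^2$, in which $g = dr^2 + r^2 \sigma_r(\omega)$. Here $\sigma_r$ is a family of metrics on $S^2$ satisfying $\sigma_r = \sigma_{\mathrm{round}} + O(r^2)$ in $C^1$. This follows from the standard Jacobi field expansion $g(\partial_\theta,\partial_\theta) = r^2\bigl(1 - \tfrac{r^2}{3}\operatorname{Rm}(\partial_r,\theta,\theta,\partial_r) + O(r^3)\bigr)$ for $\theta$ a unit vector on the round sphere, or equivalently from $g_{ij} = \delta_{ij} - \frac13 R_{ikjl}x^kx^l + O(\vert x\vert^3)$. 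Differentiating in $r$ gives
\[
A = \tfrac12 \partial_r (r^2\sigma_r) = r\sigma_r + \tfrac{r^2}{2}\partial_r\sigma_r = r\sigma_{\mathrm{round}} + O(r^3),
\]
where the error is measured in the coordinate frame on $S^2$. The induced metric is $\gamma = r^2\sigma_r = r^2\sigma_{\mathrm{round}} + O(r^4)$.

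Next we compute the trace-free part:
\[
\mathring A = A - \tfrac{H}{2}\gamma, \qquad H = \operatorname{tr}_\gamma A = \tfrac{2}{r} + O(r).
\]
Substituting gives $\mathring A = r\sigma_{\mathrm{round}} + O(r^3) - \bigl(\tfrac1r + O(r)\bigr)\bigl(r^2\sigma_{\mathrm{round}} + O(r^4)\bigr) = O(r^3)$, again measured as a tensor in the $\omega$ coordinates. The norm with respect to $\gamma$ uses two inverse metrics, each of size $r^{-2}$, so
\[
\vert \mathring A\vert^2_\gamma = \gamma^{ac}\gamma^{bd}\mathring A_{ab}\mathring A_{cd} = O(r^{-4})\cdot O(r^6) = O(r^2).
\]
This is the claimed bound.

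The main step to get right is the claim that $\partial_r \sigma_r = O(r)$, so that the $r^2\partial_r\sigma_r$ term in $A$ is $O(r^3)$. We obtain it from the Taylor expansion $g_{ij} = \delta_{ij} + O(\vert x\vert^2)$ with derivative bounds $\partial g = O(\vert x\vert)$. In the radial direction, $x\cdot\nabla g_{ij} = O(\vert x\vert^2)$. Since $r^2\sigma_r(\omega)(v,w) = g_{ij}(r\omega)\, r v^i\, r w^j$ for tangent vectors $v,w$ to $S^2$, we get $\sigma_r(\omega)(v,w) = g_{ij}(r\omega)v^iw^j$, and hence $\partial_r\sigma_r = \omega^k(\partial_k g_{ij})v^iw^j = O(r)$. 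All constants are uniform because $M$ is smooth and we work in a fixed compact normal neighborhood of $p$.
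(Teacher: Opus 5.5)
Your proof is correct and follows the same route as the paper. The paper quotes the expansion $\operatorname{Hess} r = \frac{1}{r}g_{S_r} + O(r)$, restricts it to $S_r$ to obtain $A$, and subtracts the trace part; your polar-coordinate computation $A = \frac{1}{r}\gamma + \frac{r^2}{2}\partial_r\sigma_r$ with $\partial_r\sigma_r = O(r)$ derives that same Hessian expansion explicitly, so you are supplying the justification the paper treats as standard.
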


\begin{proof}
Let $r$ be the distance function to $p$.  Then we have 
\[
\operatorname{Hess} r = \frac{g_{S_r}}{r} + O(r) 
\]
where $g_{S_r} = g - dr^2$ 
is the induced metric on $S_r$. Now $\operatorname{Hess} r$ restricted to $S_r$ is exactly the second fundamental form $A$.  Taking the trace with respect to $g_{S_r}$, we see that 
\[
H = \frac 2 r + O(r). 
\]
The trace free second fundamental form therefore satisfies 
\[
\mathring A = A - \frac{H}{2}g_{S_r} = O(r)
\]
and it follows that $\vert \mathring A\vert^2 = O(r^2)$. 
\end{proof} 

\begin{prop}
Let $H(r,\theta)$ be the mean curvature of $S_r$ at the point $q= (r,\theta)$. Then  
\begin{gather*}
H = \frac 2 r - \frac{r}{3}\ric(\theta,\theta) + O(r^2),\\
\bd_r H = -\frac 2{r^2} + O(1),\quad \vert \grad^\top H\vert = O(1),
\end{gather*}
where $\grad^\top H$ is the projection of $\grad H$ to $T_q S_r$. 
\end{prop}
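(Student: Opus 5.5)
In geodesic normal coordinates $x$ centered at $p$, I will expand the metric as
\[
g_{ij}(x) = \delta_{ij} - \tfrac13 R_{ikjl}x^kx^l + O(|x|^3).
\]
In polar form this reads $g = dr^2 + r^2\sigma_r$. Here the Jacobi field comparison gives
\[
\sigma_r(\theta) = \sigma_{S^2} - \tfrac{r^2}{3}\, R(\theta,\cdot,\theta,\cdot)|_{T S^2} + O(r^3).
\]
The mean curvature of $S_r$ at $(r,\theta)$ is the logarithmic derivative of the area density: $H = \bd_r \log \sqrt{\det g_{S_r}}$ in these coordinates. The standard volume-element expansion is
\[
\sqrt{\det g}(r\theta) = 1 - \tfrac{r^2}{6}\ric(\theta,\theta) + O(r^3).
\]
Using it, the area density of $S_r$ is $r^2\bigl(1 - \tfrac{r^2}{6}\ric(\theta,\theta) + O(r^3)\bigr)$.

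Differentiating in $r$ and dividing gives
\[
H = \frac{2}{r} - \frac{r}{3}\ric(\theta,\theta) + O(r^2),
\]
which is the first claim. To legitimize the termwise differentiation, I will note that the remainder $O(r^3)$ in the density comes from the Taylor expansion of a smooth function of $x = r\theta$. Its $r$-derivative is therefore $O(r^2)$, uniformly in $\theta$.

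For $\bd_r H$, I need one more order of control. The function $\phi(r,\theta) := \log\bigl(\sqrt{\det g}(r\theta)\bigr)$ is smooth in $x = r\theta$ with $\phi = O(r^2)$, and $H = \frac2r + \bd_r\phi$. Then $\bd_r H = -\frac{2}{r^2} + \bd_r^2\phi$. Since $\phi$ is smooth in $x$, we have $\bd_r^2 \phi = \theta^i\theta^j \bd_i\bd_j\phi(r\theta)$, which is bounded. This gives $\bd_r H = -\frac2{r^2}+O(1)$.

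For the tangential gradient, I use $\grad^\top H = r^{-1}\cdot(\text{$\sigma_r$-gradient in }\theta \text{ of } H)$ up to $1+O(r^2)$ factors. The angular derivative of $\frac 2r$ vanishes. The angular derivative of $\bd_r\phi$ is $r\,\bd_\theta(\ldots)$ of a smooth function of $x$, because a $\theta$-derivative of a smooth function of $r\theta$ produces a factor of $r$; hence it is $O(r)$. After dividing by $r$, this gives $|\grad^\top H| = O(1)$.

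The only delicate point is making sure the error terms are uniform and differentiable, i.e.\ not just $O(r^2)$ but derivatives of smooth functions of $x$. I will handle this by writing everything through the smooth function $\phi(x)$ rather than through asymptotic expansions.
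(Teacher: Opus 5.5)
Your proposal is correct and follows the same route as the paper, which cites the well-known expansion $H = \frac 2r - \frac r3 \ric(\theta,\theta) + O(r^2)$ and then differentiates; writing $H = \frac 2r + \bd_r\phi$ with $\phi = \log\sqrt{\det g}$ smooth in $x$ justifies differentiating the remainders, a step the paper leaves implicit. One small correction to the tangential estimate: in $\bd_r\phi = \theta^i(\bd_i\phi)(r\theta)$, the angular derivative that falls on the factor $\theta^i$ does not produce a factor of $r$, so that term is $O(r)$ because $\grad\phi(0)=0$, not for the reason you give (equivalently, write $\bd_r\phi = r^{-1}(x\cdot\grad\phi)(x)$ with $x\cdot\grad\phi = O(|x|^2)$).
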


\begin{proof}
The expansion of the mean curvature is well-known. The formulas for $\bd_r H$ and $\vert \grad^\top H\vert$ then follow by taking derivatives.
\end{proof}

It is convenient to zoom in near the point $p$.  Fix some small $r > 0$ and define the rescaled metric $g_r = r^{-2}g$.  Let $\tilde x$ be the geodesic normal coordinates centered at $p$ in the metric $g_r$.   Let $\Gamma = \{\vert \tilde x\vert = \tau\}$ for some $\tau \in (1,3)$.  Then from the above expansions, we see that in the $g_r$ metric $\Gamma$ satisfies 
\begin{gather*}
\vert \mathring A_\Gamma\vert_{g_r}^2 
= O(r^4),\\
H^{g_r}_\Gamma 
= \frac{2}{\tau} + O(r^2),\\
(\bd_\nu H)_{g_r} = - \frac 2 {\tau^2} + O(r^2),\quad (\vert \grad^\top H\vert)_{g_r} = O(r^2). 
\end{gather*} 
We note that this implies 
\begin{align*} 
\vert \grad H\vert + \bd_\nu H & = -\bd_\nu H\left( \sqrt{1 + \frac{\vert \grad^\top H\vert^2}{(\bd_\nu H)^2}} - 1\right) \\
&= O(1) \left(\sqrt{1 + O(r^4)} - 1\right) = O(r^4) 
\end{align*} 
in the $g_r$ metric. 

Likewise, for a fixed small $r > 0$, we define rescaled metrics $g_{k,r} = r^{-2} g_k$.  
Then the maps $f_k\colon(M_k,g_{k,r})\to (M,g_r)$ are still $1$-Lipschitz and we have $\vol(M_k,g_{k,r})\to \vol(M,g_r)$ as $k\to \infty$. 
By our contradiction assumption, we can find $c > 0$ and a small neighborhood $U$ of $p$ in the $g$-metric so that 
\[
R_g(q) \le \kappa-c
\]
for all $q\in U$. 
The scalar curvatures of the rescaled metrics satisfy 
\[
R_{g_r} = r^2 R_g, \quad R_{g_{k,r}} = r^2 R_{g_k} \ge r^2\kappa. 
\]
In particular, assuming $r$ is small enough, we have 
\[
R_{g_{k,r}}(q_k)-R_{g_r}(q) \ge cr^2 
\]
for all $q_k\in M_k$ and all $q \in \{\vert \tilde x\vert \le 5\}\subset M$. 
Then by the previous expansions, we can select $r$ small enough so that 
\[
\left\vert \frac{\vert \mathring A_\Gamma\vert_{g_r}^2}{2} + (\vert \grad H\vert + \bd_\nu H)_{g_r} \right\vert \le \frac{cr^2}{2}
\]
for all $\tau \in (1,3)$ and all points $q$ in $\Gamma = \{\vert \tilde x\vert = \tau\}$.  We fix such a small $r$ and relabel the metric $g_r$ as $g$ and the metrics $g_{k,r}$ as $g_k$.      
Then combining everything we see that there is an $\eps > 0$ so that 
\begin{equation}
\label{eq:zoom}
\frac{R_{g_k}(q_k)}{2} \ge \left(\frac{R_{g}}{2} +\frac{\vert \mathring A\vert^2}{2} + \vert \grad H\vert + \bd_\nu H\right)(q) + \eps
\end{equation}
for all points $q_k \in M_k$ and all points $q\in \{1< d_g(\cdot,p) < 3\} \subset M$. 
We assume that this rescaling has been carried out throughout the remainder of the paper.

\subsection{Transversality}
Let $d(x) = d_g(x,p)$ be the distance function to $p$ in $M$. Let $\Sigma_t = \{d = t\}$ be the geodesic sphere of radius $t$ centered at $p$ in $M$. We want to show that $f_k^{-1}(\Sigma_t)$ is a smooth, closed manifold for almost all choices of $t$. 

\begin{prop}
    The map $f_k$ is transverse to $\Sigma_t$ for almost every $t\in (0,5)$. 
\end{prop}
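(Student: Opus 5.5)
Since $d$ is the distance to $p$ in the rescaled metric, the plan is to reduce transversality to a regular value statement and then apply Sard's theorem. We need the rescaled injectivity radius at $p$ to exceed $5$; since $g$ is the rescaling $r^{-2}g$ with $r$ small, this holds. Then $d$ is smooth on $B(p,5)\setminus\{p\}$ with $|\grad d| = 1$, and $\Sigma_t = d^{-1}(t)$ is a smooth embedded sphere for every $t\in(0,5)$.

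Consider the composition $u_k = d\circ f_k$, which is smooth on the open set $V_k = f_k^{-1}(B(p,5)\setminus\{p\})\subset M_k$. The key observation is the following equivalence. Take $y\in f_k^{-1}(\Sigma_t)$ and set $x=f_k(y)$. Then $T_x\Sigma_t = \ker d d_x$, and since $dd_x\neq 0$ its image is all of $\R$. The condition $df_k(T_yM_k)+T_x\Sigma_t = T_xM$ is therefore equivalent to $dd_x\circ (df_k)_y$ being surjective onto $\R$, i.e.\ to $(du_k)_y\neq 0$. So $f_k$ is transverse to $\Sigma_t$ exactly when $t$ is a regular value of $u_k$ on $V_k$.

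By Sard's theorem applied to the smooth function $u_k\colon V_k\to(0,5)$, the critical values have Lebesgue measure zero. Hence for almost every $t\in(0,5)$, $f_k$ is transverse to $\Sigma_t$. Moreover, since $f_k^{-1}(\Sigma_t)$ is a closed subset of the compact manifold $M_k$ contained in $V_k$, for such $t$ it is a smooth closed surface in $M_k$ by the regular value theorem. Taking the countable union of the null sets over all $k$ gives a full measure set of $t$ that works for every $k$ simultaneously, which is presumably what will be used later.

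The only step needing care is the first: confirming that $d$ is smooth with nonvanishing gradient on the whole range $(0,5)$, which comes from the choice of $r$ in the rescaling. The remaining steps are the standard equivalence between transversality to a level set and regularity of the composed function, followed by Sard's theorem.
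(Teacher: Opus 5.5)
Your proposal is correct and follows essentially the same route as the paper: you reduce transversality of $f_k$ to $\Sigma_t$ to $t$ being a regular value of $d\circ f_k$, then invoke Sard's theorem. Your check that the rescaled injectivity radius at $p$ exceeds $5$, so that $d$ is smooth on $B(p,5)\setminus\{p\}$, fixes a point the paper glosses over when it asserts that $d\circ f_k$ is smooth on all of $f_k^{-1}(M\setminus\{p\})$.
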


\begin{proof}
    Consider the composition $d\circ f_k$. This is a smooth map on $f_k^{-1}(M\setminus\{p\})$ and so almost every $t\in (0,5)$ will be a regular value. Select a regular value $t$ of $d\circ f_k$. We claim that for this $t$, the map $f_k$ is transverse to $\Sigma_t$. Indeed, consider some point $x\in \Sigma_t$. Note that $f_k^{-1}(x) \subset (d\circ f_k)^{-1}(t)$ and therefore at each point $y\in f_k^{-1}(x)$ there is some tangent vector $v\in T_yM_k$ for which $D_y (d\circ f_k)(v) \neq 0$. But $D_y (d\circ f_k)(v) \neq 0$ if and only if $D_y f_k(v)$ is not tangential to $\Sigma_t$. Hence the existence of such a $v$ implies that $T_x\Sigma_t \oplus D_yf_k(T_yM_k)$ is the entire tangent space $T_xM$. This shows that $f_k$ is transverse to $\Sigma_t$. 
\end{proof}

After passing to a subsequence which we do not relabel, we can find $c_1\approx 1$ and $c_2\approx 3$ so that $ f_k^{-1}(\{c_1 < d < c_2\})$ is a compact submanifold of $M_k$ with smooth boundary $f_k^{-1}(\Sigma_{c_1})\cup f_k^{-1}(\Sigma_{c_2})$ for all $k$. Since the actual values of $c_1$ and $c_2$ do not materially affect the proof, we shall assume for notational simplicity that 
$c_1=1$ and $c_2=3$ in what follows.

\subsection{Constructing the Model \texorpdfstring{$\mu$}{mu}-Bubble}

We now want to construct a model $\mu$-bubble in the $g$ metric by minimizing the functional 
\[
\mathcal A^h_g(\Omega) = \mathcal H^2_g(\bd^* \Omega) - \int (\chi_\Omega - \chi_{\Omega_0})h\, dv
\]
for a suitable choice of $h$ and $\Omega_0$. 

The first step is to make a good choice of $\Omega_0$.  Define $\Gamma^k_t = f_k^{-1}(\Sigma_t)$ and note that $f_k\colon \Gamma^k_t \to \Sigma_t$ is surjective.  Also, observe the surfaces $\Gamma^k_t$ are the level sets of $d\circ f_k$ and so $\Gamma^k_t$ is a smooth, closed surface in $M_k$ for almost every $t$.

\begin{prop}
    After passing to a subsequence which we do not relabel, we have $\area_{g_k}(\Gamma^k_t) \to \area_g(\Sigma_t)$ for almost every $t\in [1,3]$. 
\end{prop}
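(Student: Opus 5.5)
Two facts combine here: a lower bound $\area_{g_k}(\Gamma^k_t)\ge\area_g(\Sigma_t)$ for each admissible $t$ coming from surjectivity and the Lipschitz property, and an integrated upper bound coming from the co-area formula and the volume convergence of Proposition \ref{prop:subset-volume}. Together they force $L^1$ convergence of the area defect in $t$, so a subsequence converges for almost every $t$.

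\emph{Step 1 (pointwise lower bound).} Fix $t$ with $f_k$ transverse to $\Sigma_t$, so that $\Gamma^k_t$ is a smooth closed surface and $f_k\colon\Gamma^k_t\to\Sigma_t$ is surjective. Since $f_k$ is $1$-Lipschitz, its restriction to $\Gamma^k_t$ has two-dimensional Jacobian at most $1$. The area formula for this restriction (a map between surfaces) then gives
\[
\area_{g_k}(\Gamma^k_t)\ \ge\ \int_{\Sigma_t}\#\bigl(f_k^{-1}(x)\cap\Gamma^k_t\bigr)\,da\ \ge\ \area_g(\Sigma_t).
\]
This holds for almost every $t$, since transversality holds for almost every $t$.

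\emph{Step 2 (integrated upper bound).} Let $\phi_k=d\circ f_k$. Because $f_k$ is $1$-Lipschitz and $|\nabla d|=1$ away from $p$, we have $|\nabla\phi_k|\le 1$. Apply the co-area formula to $\phi_k$ with $g=\chi_{W_k}$, where $W=\{1<d<3\}$ and $W_k=f_k^{-1}(W)$:
\[
\int_1^3\area_{g_k}(\Gamma^k_t)\,dt=\int_{W_k}|\nabla\phi_k|\,dv_k\le\vol_{g_k}(W_k).
\]
On $M$, since $|\nabla d|=1$, the co-area formula gives $\int_1^3\area_g(\Sigma_t)\,dt=\vol_g(W)$.

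\emph{Step 3 (conclusion).} Set $F_k(t)=\area_{g_k}(\Gamma^k_t)-\area_g(\Sigma_t)$. By Step 1, $F_k\ge0$ almost everywhere on $[1,3]$. By Step 2 and Proposition \ref{prop:subset-volume},
\[
\int_1^3 F_k\,dt\le\vol_{g_k}(W_k)-\vol_g(W)\to0 .
\]
Thus $F_k\to0$ in $L^1([1,3])$, and passing to a subsequence gives $F_k\to0$ almost everywhere.

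The diagonal issues need a little care, since the exceptional sets depend on $k$. The union over $k$ of the non-transverse parameter sets is still null, so on the complement of this null set Step 1 holds for every $k$ at once. The subsequence extraction is a standard $L^1$ fact; it is also compatible with the earlier subsequence chosen in the transversality discussion.

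The main point requiring care is Step 1, namely justifying that the tangential Jacobian of $f_k|_{\Gamma^k_t}$ is at most $1$ and that surjectivity transfers to the restriction. Both are immediate: the operator norm of $df_k$ is at most $1$, so its restriction to the tangent plane of $\Gamma^k_t$ has $\det\le1$. Surjectivity follows from $\Gamma^k_t=f_k^{-1}(\Sigma_t)$ together with surjectivity of $f_k$.
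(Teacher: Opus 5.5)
Your proposal is correct and follows essentially the same route as the paper. Both arguments use the co-area formula for $d\circ f_k$ (with $|\nabla(d\circ f_k)|\le 1$) and for $d$, the pointwise bound $\area_{g_k}(\Gamma^k_t)\ge\area_g(\Sigma_t)$ from surjectivity and the $1$-Lipschitz property, and Proposition \ref{prop:subset-volume} to get $L^1$ convergence of the nonnegative defect, then extract an a.e.\ convergent subsequence; your area-formula justification of the lower bound, via the tangential Jacobian of $f_k|_{\Gamma^k_t}$ being at most $1$, just fills in a step the paper states without detail.
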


\begin{proof} Let $W = \{1 < d < 3\}$ and let $W_k = f_k^{-1}(W)$. 
Applying the co-area formula to $d\circ f_k$, we see that 
\[
\int_{W_k} \vert \grad^{g_k} (d\circ f_k)\vert\, dv_k = \int_1^3 \area_{g_k}(\Gamma^k_t)\, dt. 
\]
Since the composition $d\circ f_k$ is 1-Lipschitz, we have a bound $\vert \grad^{g_k} (d\circ f_k)\vert \le 1$. Thus we deduce that 
\[
\vol_{g_k}(W_k) \ge \int_1^3 \area_{g_k}(\Gamma^k_t)\, dt. 
\]
On the other hand, applying the co-area formula to $d$, we have 
\[
\vol_g(W) = \int_1^3 \area_g(\Sigma_t)\, dt. 
\]
Now since $f_k\colon \Gamma^k_t\to \Sigma_t$ is 1-Lipschitz and surjective, we have $\area_{g_k}(\Gamma^k_t) \ge \area_g(\Sigma_t)$ for almost every $t$. Therefore we obtain 
\[
0 \le \int_1^3 \left\vert \area_{g_k}(\Gamma^k_t) - \area_g(\Sigma_t)\right\vert\, dt \le \vol_{g_k}(W_k) - \vol_g(W), 
\]
and the right hand side goes to 0 by Proposition \ref{prop:subset-volume}.
In particular, we see that $\area_{g_k}(\Gamma^k_t)\to \area_g(\Sigma_t)$ as functions in $L^1([1,3])$.  Hence, after passing to a subsequence, the function $\area_{g_k}(\Gamma^k_t)$ will converge to $\area_g(\Sigma_t)$ pointwise for almost every $t\in [1,3]$. 
\end{proof}

We now select some $\tau$ close to 2 so that $\Gamma^k_\tau$ is a smooth closed manifold for all $k$ and $\area_{g_k}(\Gamma^k_\tau)\to \area_g(\Sigma_\tau)$ as $k\to \infty$.

The next task is to make a good choice of $h$. We will choose $h$ so that $\Sigma_\tau$ is the unique minimizing $\mu$-bubble in the $g$ metric.  We also need to ensure that $h$ satisfies several other properties.  We begin with an elementary lemma. 

\begin{lemma}
\label{Lemma:f}
There is a smooth function $f\colon (1,3)\to \R$ with the following properties: 
\begin{itemize}
\item[(i)] $f(r) \to \infty$ as $r\to 1^+$,
\item[(ii)] $f(r) \to -\infty$ as $r\to 3^-$,
\item[(iii)] $f(r) > 0$ for $r < \tau$,
\item[(iv)] $f(r) < 0$ for $r > \tau$,
\item[(v)] $f(\tau) = 0$ and $f'(\tau) = 0$,
\item[(vi)] $m_1 (r-1)^{-2} \le \vert f(r)\vert \le m_2 (r-1)^{-2}$ as $r\to 1^+$,
\item[(vii)] $\vert f'(r)\vert \le m_3 (r-1)^{-3}$ as $r\to 1^+$,
\item[(viii)] $m_1 (3-r)^{-2} \le \vert f(r)\vert \le m_2 (3-r)^{-2}$ as $r\to 3^-$,
\item[(ix)] $\vert f'(r)\vert \le m_3 (3-r)^{-3}$ as $r\to 3^{-}$. 
\end{itemize}
Here $m_1,m_2,m_3 > 0$ are positive constants. 
\end{lemma}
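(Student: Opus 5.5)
An explicit formula will do. Take
\[
f(r) = (\tau - r)^2\left(\frac{1}{(r-1)^{4}} - \frac{1}{(3-r)^{4}}\right) \cdot \frac{1}{\tau - r}\quad\text{?}
\]
That particular product gives the wrong blow-up rates, so instead we build $f$ from two pieces. Set
\[
f(r) = (\tau-r)^3\, \phi(r), \qquad \phi(r) = \frac{1}{(r-1)^{5}} + \frac{1}{(3-r)^{5}}.
\]
Since $\phi>0$ on $(1,3)$, the sign of $f$ is the sign of $(\tau - r)^3$. This gives (iii) and (iv) at once, and (v) holds because $f$ vanishes to third order at $\tau$.

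The blow-up conditions come next. As $r\to 1^+$ we have $(\tau-r)^3\to(\tau-1)^3>0$, so $f\to+\infty$, which is (i). However, the rate is $(r-1)^{-5}$, not the $(r-1)^{-2}$ required by (vi). So the exponent must be adjusted: replace $\phi$ by $\phi(r)=(r-1)^{-2}+(3-r)^{-2}$. Then near $r=1$,
\[
f(r) = (\tau-r)^3\big((r-1)^{-2} + O(1)\big),
\]
and the prefactor $(\tau-r)^3$ lies between two positive constants there. This gives the two-sided bound (vi) with constants $m_1,m_2$ depending on $\tau\approx 2$.

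For the derivative,
\[
f'(r) = -3(\tau-r)^2\phi(r) + (\tau-r)^3\phi'(r),
\]
with $\phi'(r) = -2(r-1)^{-3} + 2(3-r)^{-3}$. Hence $|f'|\le C(r-1)^{-2} + C(r-1)^{-3} \le m_3 (r-1)^{-3}$ near $1$, which is (vii). The end $r\to 3^-$ is symmetric: there $(\tau - r)^3 \to (\tau-3)^3<0$, so $f\to-\infty$, giving (ii), and $|f|$ is comparable to $(3-r)^{-2}$ and $|f'|\lesssim (3-r)^{-3}$, giving (viii) and (ix). Smoothness on $(1,3)$ is clear since $f$ is a product of smooth functions there.

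No step here is a real obstacle. The only point needing care is choosing the exponents so that $f$ vanishes to order at least two at $\tau$ (giving $f'(\tau)=0$) while keeping the prescribed blow-up orders $2$ for $f$ and $3$ for $f'$ at both ends. The cube $(\tau-r)^3$ handles the former while preserving the sign change. Finally, the constants $m_i$ need only hold in a neighborhood of each endpoint, and there the bounded factor $(\tau-r)^3$ stays away from zero.
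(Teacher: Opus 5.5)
Your final choice $f(r) = (\tau-r)^3\big((r-1)^{-2}+(3-r)^{-2}\big)$ is correct and is essentially the paper's construction, which takes $f(r) = (\tau-r)^3(r-1)^{-2}(3-r)^{-2}$; using the sum rather than the product of the two inverse-square factors changes nothing in the verification. In a final write-up you should delete the two abandoned trial formulas.
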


\begin{proof}
Define the function 
\[
f(r) = \frac{(\tau - r)^3}{(r-1)^2(3-r)^2}. 
\]
Conditions (i), (ii), (iii), and (iv) are immediate.  For (v), we see that $f(\tau) = 0$ and we compute that 
\[
f'(r) = -\frac{(\tau - r)^2 (8 \tau -4\tau r  + r^2 + 4r - 9)}{(r-1)^3 (3-r)^3}
\]
so that $f'(\tau) = 0$ as well.  Finally, as $r\to 1^+$ we see that 
\[
\frac{m_1}{(r-1)^2} \le \vert f(r)\vert  \le \frac{m_2}{(r-1)^2}, \quad \vert f'(r)\vert \le \frac{m_3}{(r-1)^3} 
\]
for some constants $m_1,m_2,m_3 > 0$. Likewise, as $r \to 3^-$ we see that 
\[
\frac{m_1}{(3-r)^2} \le \vert f(r)\vert  \le  \frac{m_2}{(3-r)^2}, \quad \vert f'(r)\vert \le \frac{m_3}{(3-r)^3} 
\]
for some constants $m_1,m_2,m_3 > 0$.  Thus (vi), (vii), (viii), and (ix) also hold.   
\end{proof} 

We now define the function $h$.  Let $H$ be the mean curvature of the foliation $\Sigma_t$, i.e. $H(x)$ is the mean curvature of $\Sigma_{d(x)}$ at the point $x$. 

\begin{prop} There is a function $h$ defined on $\{1 < d < 3\}$ with the following properties: 
\begin{itemize}
    \item[(i)] $h\to \infty$ as $d\to 1^+$,
    \item[(ii)] $h\to -\infty$ as $d\to 3^-$,
    \item[(iii)] $h > H$ inside $\Sigma_\tau$,
    \item[(iv)] $h < H$ outside $\Sigma_\tau$
    \item[(v)] $h = H$ and $\nu(h) = \nu(H)$ on $\Sigma_\tau$
    \item[(vi)] $\frac 3 4 h^2 - \vert \grad h\vert \ge 0$ for $d$ close to 1, and for $d$ close to 3, and near the surface $\Sigma_\tau$. 
\end{itemize}
\end{prop}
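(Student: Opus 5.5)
We take $h = H + f\circ d$, where $f$ is the function of Lemma \ref{Lemma:f}, possibly multiplied by a large constant $A>0$ (so $h = H + A f(d)$). Recall that after rescaling, on $\{1<d<3\}$ the function $H$ is smooth and bounded with bounded gradient, since $H = 2/d + O(r^2)$ and $\grad H$ is $O(1)$ there. Properties (i)--(v) then follow directly from the lemma. For (i) and (ii), $f\to \pm\infty$ at the two ends while $H$ stays bounded. For (iii) and (iv), $h - H = Af(d)$ has the sign of $\tau - d$. For (v), $f(\tau)=0$ gives $h=H$ on $\Sigma_\tau$, and $f'(\tau)=0$ together with $\nu = \grad d$ gives $\nu(h) = \nu(H) + Af'(\tau) = \nu(H)$ there.

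The content lies in (vi), which we handle in three regions.

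Near $d=1$: by (vi) and (vii) of the lemma,
\[
\tfrac34 h^2 \ge \tfrac34\big(A m_1 (d-1)^{-2} - C\big)^2 \approx \tfrac34 A^2 m_1^2 (d-1)^{-4},
\]
while
\[
\vert \grad h\vert \le \vert \grad H\vert + A\vert f'(d)\vert \le C + A m_3 (d-1)^{-3}.
\]
The quartic blow-up dominates the cubic one, so $\frac34 h^2 - \vert\grad h\vert \ge 0$ for $d$ sufficiently close to $1$. The region near $d=3$ is identical, using (viii) and (ix).

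Near $\Sigma_\tau$ we argue differently. There $f$ vanishes to second order, so $h \approx H \approx 2/\tau$ and $\grad h \approx \grad H$. On $\Sigma_\tau$ itself,
\[
\tfrac34 h^2 - \vert \grad h\vert = \tfrac34 H^2 - \vert\grad H\vert \approx \tfrac{3}{\tau^2} - \tfrac{2}{\tau^2} = \tfrac1{\tau^2} > 0,
\]
using the rescaled expansions $H = 2/\tau + O(r^2)$, $\bd_\nu H = -2/\tau^2 + O(r^2)$ and $\vert\grad^\top H\vert = O(r^2)$. This quantity is continuous, so it remains positive on a neighborhood of $\Sigma_\tau$.

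The only potential obstacle is the role of the constant $A$, and it does not actually cause trouble. Any fixed $A$ works for all three regions: near the ends the relevant powers of $(d-1)^{-1}$ and $(3-r)^{-1}$ dominate regardless of $A$, and near $\Sigma_\tau$ positivity comes from continuity. So we can take $A=1$ and $h = H + f\circ d$. The statement only asks for (vi) "close to" these regions, so no global control is needed, and that is the point to keep in mind. The later uniqueness of $\Sigma_\tau$ as the minimizing $\mu$-bubble will come from (iii) and (iv) via a foliation and maximum-principle argument, but that is not part of this proposition.
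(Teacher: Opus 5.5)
Your proposal is correct and follows the paper's proof essentially verbatim. Both take $h = H + f\circ d$, read off (i)--(v) from Lemma~\ref{Lemma:f}, and get (vi) from the quartic-versus-cubic blow-up at the ends together with $\frac34 H^2 - \vert \grad H\vert \approx \frac{1}{\tau^2} > 0$ and continuity near $\Sigma_\tau$; your computation also shows that the paper's remark that $\tau$ is close to $2$ is not actually needed there.
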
 

\begin{proof} 
We define $h = H + f(t)$ on $\Sigma_t$. Since $H$ is a smooth bounded function on $\{1 \le d \le 3\}$,  properties (i), (ii), (iii), (iv) are immediate from the corresponding properties of $f$.  It is also immediate that $h = H$ on $\Sigma_\tau$ since $f(\tau) = 0$, and that $\nu(h) = \nu(H)$ since $f'(\tau) = 0$ and the foliation $\Sigma_t$ consists of level sets of the distance function. 

It remains to verify (vi).  Near the surface $\Sigma_\tau$ we know that $H$ is very nearly $\frac{2}{\tau}$ and $\vert \grad H\vert$ is very nearly $\frac{2}{\tau^2}$.  Since $f(\tau) = f'(\tau) = 0$ and $\tau$ is close to 2, it follows that we have 
\[
\frac 3 4 h^2 - \vert \grad h\vert \ge 0
\]
near $\Sigma_\tau$. Finally, note that 
\begin{align*}
\frac 3 4 h^2 - \vert \grad h\vert &= \frac{3}{4} (H + f)^2 - \vert \grad H + \grad f\vert \\
&\ge \frac 3 4 f^2 + \frac 3 2 f H + \frac 3 4 H^2 - \vert \grad H\vert - \vert \grad f\vert \\
&\ge \frac 3 4 f^2 - C(1+f) - \vert \grad f\vert.
\end{align*} 
Hence using properties (vi), (vii), (viii), and (ix) of $f$, we deduce that $\frac 3 4 h^2 - \vert \grad h\vert \ge 0$ when $d$ is close to $1$ and when $d$ is close to $3$. 
\end{proof}

For simplicity, let us introduce the notation $\Sigma = \Sigma_\tau$, and define $\Omega_0 = \{d < \tau\}$ to be the set enclosed by $\Sigma_\tau$.  Let $\mathcal C$ be the class of Caccioppoli sets in $M$ whose symmetric difference with $\Omega_0$ is compactly contained in $\{1 < d < 3\}$.  For the metric $g$, we define the following $\mu$-bubble functional:
\[
\mathcal A^h_{g}(\Omega) = \mathcal H^2_{g}(\bd^* \Omega) - \int (\chi_\Omega - \chi_{\Omega_0}) h\, dv, \quad \Omega\in \mathcal C. 
\]
We now confirm that $\Omega_0$ is the unique minimizer of $\mathcal A^h_{g}$.  

\begin{prop} \label{Proposition:QuantitativeMinimality}
The set $\Omega_0$ is the unique minimizer of $\mathcal A^h_g$. In fact, let $U$ be any small neighborhood of $\Sigma$. Then there is a constant $c = c(U) > 0$ such that 
\[
\mathcal A^h_g(\Omega) \ge \mathcal A^h_g(\Omega_0) + c \vol( (\Omega \operatorname{\Delta} \Omega_0) - U)
\]
for all $\Omega \in \mathcal C$. 
\end{prop}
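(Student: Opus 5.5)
Use a calibration argument with the unit vector field $X = \grad d$, the outward unit normal to the foliation $\Sigma_t$. Its divergence is $\operatorname{div} X = H$, the mean curvature of $\Sigma_{d(x)}$. For any $\Omega\in\mathcal C$, the divergence theorem on the region $\Omega\operatorname{\Delta}\Omega_0$ will be applied with signs. Since $\Omega\operatorname{\Delta}\Omega_0$ is compactly contained in $\{1<d<3\}$, where $X$ is smooth, we get
\[
\int_{\bd^*\Omega} \langle X,\nu_\Omega\rangle\, da - \int_{\Sigma}\langle X,\nu_{\Omega_0}\rangle\, da = \int (\chi_\Omega-\chi_{\Omega_0})\, H\, dv .
\]
Here $\nu$ denotes the outward measure-theoretic normal; this is the Gauss--Green formula for Caccioppoli sets. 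On $\Sigma$ we have $\nu_{\Omega_0}=X$, so the second boundary term equals $\mathcal H^2(\Sigma)$. Also $\langle X,\nu_\Omega\rangle\le 1$. This gives
\[
\mathcal H^2(\bd^*\Omega) - \mathcal H^2(\Sigma) \ge \int (\chi_\Omega-\chi_{\Omega_0})H\, dv .
\]

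Substituting into the functional gives
\[
\mathcal A^h_g(\Omega)-\mathcal A^h_g(\Omega_0) \ge \int (\chi_\Omega-\chi_{\Omega_0})(H-h)\, dv .
\]
Now use the sign properties (iii) and (iv) of $h$. On $\Omega\setminus\Omega_0$ (outside $\Sigma$) we have $\chi_\Omega-\chi_{\Omega_0}=1$ and $H-h=-f(d)>0$. On $\Omega_0\setminus\Omega$ (inside $\Sigma$) we have $\chi_\Omega-\chi_{\Omega_0}=-1$ and $H-h=-f(d)<0$. In both cases the integrand equals $|f(d)|$, so
\[
\mathcal A^h_g(\Omega)-\mathcal A^h_g(\Omega_0)\ge \int_{\Omega\operatorname{\Delta}\Omega_0}|f(d)|\, dv.
\]
Since $U$ is a neighborhood of $\Sigma=\{d=\tau\}$, it contains a shell $\{|d-\tau|<\delta\}$. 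Away from that shell, $|f|$ is bounded below by some $c>0$ on $(1,3)\setminus(\tau-\delta,\tau+\delta)$. This holds because $f$ is continuous, vanishes only at $\tau$ by (iii) and (iv) of Lemma \ref{Lemma:f}, and tends to $\pm\infty$ at the endpoints. It follows that the right-hand side is at least $c\,\vol((\Omega\operatorname{\Delta}\Omega_0)-U)$, which is the quantitative inequality.

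Uniqueness follows. If $\Omega$ is also a minimizer, the estimate forces $\int_{\Omega\operatorname{\Delta}\Omega_0}|f(d)|=0$. Since $f(d)\neq0$ off the null set $\Sigma$, this gives $\vol(\Omega\operatorname{\Delta}\Omega_0)=0$, so $\Omega=\Omega_0$ as Caccioppoli sets.

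The main point to check carefully is that the divergence theorem applies. $X$ is singular at $p$, but $\Omega\operatorname{\Delta}\Omega_0\subset\subset\{1<d<3\}$. So we can apply Gauss--Green to $\Omega\cap\{d>1+\eta\}$ and $\Omega_0\cap\{d>1+\eta\}$ with a smooth cutoff, and the inner boundary contributions cancel because the two sets agree near $d=1+\eta$. The radius $3<$ injectivity radius is guaranteed by the rescaling, so $d$ is smooth on $\{1\le d\le 3\}$.
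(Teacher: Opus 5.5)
Your proposal is correct and is the same calibration argument as the paper. The divergence theorem for $\grad d$ gives $\mathcal A^h_g(\Omega)-\mathcal A^h_g(\Omega_0)\ge\int(\chi_\Omega-\chi_{\Omega_0})(H-h)\,dv$, and properties (iii), (iv) make the integrand $|f(d)|\ge c(U)>0$ off $U$, which is exactly the paper's pointwise inequality $c+(\chi_\Omega-\chi_{\Omega_0})h\le(\chi_\Omega-\chi_{\Omega_0})H$; you also supply details the paper's one-line proof leaves implicit (Gauss--Green for Caccioppoli sets, avoiding the singularity at $p$, and the uniqueness deduction).
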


\begin{proof}
This follows from the divergence theorem applied to the unit normal $\nu$ to the spheres $\Sigma_t$, noting that 
\[
c + (\chi_\Omega - \chi_{\Omega_0}) h  \le (\chi_\Omega - \chi_{\Omega_0})H 
\]
for some $c = c(U) > 0$ at all points $x \in (\Omega \operatorname{\Delta} \Omega_0) - U$ by properties (iii) and (iv) of $h$. 
\end{proof} 

\begin{prop}
\label{Proposition:DegenerateStable}
The set $\Omega_0$ is a degenerate stable critical point for $\mathcal A^h_g$ with first eigenfunction equal to 1. 
\end{prop}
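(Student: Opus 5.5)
We verify the three ingredients of being a degenerate stable critical point directly from the construction of $h$. First, $\Omega_0$ is critical: the first variation of $\mathcal A^h_g$ at $\Omega_0$ in the direction $\psi\nu$ is $\int_\Sigma (H_\Sigma - h)\psi\, da$. This vanishes because $h = H$ on $\Sigma=\Sigma_\tau$ by property (v) of $h$. (Criticality also follows from the minimality in Proposition \ref{Proposition:QuantitativeMinimality}, but the direct computation is cleaner.)

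Second, we compute the Jacobi operator. By the second variation formula, the stability operator for $\mathcal A^h_g$ at $\Sigma$ is
\[
L\psi = -\lap_\Sigma \psi - (\ric(\nu,\nu)+\|A_\Sigma\|^2)\psi - \nu(h)\psi .
\]
Since $\nu(h)=\nu(H)$ on $\Sigma$ by property (v), the zeroth order coefficient equals $\ric(\nu,\nu)+\|A\|^2+\partial_\nu H$. Now $\Sigma_t$ is a foliation by level sets of the distance function $d$, so the normal speed is $1$. The evolution of mean curvature along this unit-speed normal flow gives
\[
\partial_\nu H = -\ric(\nu,\nu) - \|A\|^2 .
\]
Hence the potential term vanishes identically, and $L = -\lap_\Sigma$ on $\Sigma$.

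Third, we read off the spectrum. The operator $-\lap_\Sigma$ on the closed connected surface $\Sigma$ is nonnegative. Its lowest eigenvalue is $0$, with eigenfunctions the constants. So $\Omega_0$ is stable, degenerate (it has a nontrivial kernel), and its first eigenfunction is $1$.

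The only step needing care is the evolution identity. It must be applied on a region where the geodesic spheres are smooth, which holds since we are well inside the injectivity radius after the rescaling. It also needs the sign convention that $\nu$ points out of $\Omega_0$, in the direction of increasing $d$, which matches the convention in the second variation formula. Everything else is immediate from property (v) of $h$.
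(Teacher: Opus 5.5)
Your argument is correct, and it reaches the conclusion by a somewhat different mechanism than the paper.

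The paper works through the auxiliary functional $\mathcal A^H_g$. Every $\Omega_t$ is a critical point of this single functional, so the normal speed $1$ of the distance foliation is a Jacobi field, and $\Sigma$ is degenerate stable for $\mathcal A^H_g$. The paper then plugs $\psi\equiv 1$ into the stability inequality and uses the Schoen--Yau rearrangement and Gauss--Bonnet to get $\int_\Sigma \frac{R_g}{2}+\frac{\vert\mathring A\vert^2}{2}+\frac34 H^2+\nu(H)\,da=4\pi$. This identity is transferred to $h$ via property (v).

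You instead compute the potential of the Jacobi operator pointwise. Using the Riccati identity $\nu(H)=-\ric(\nu,\nu)-\vert A\vert^2$ for the level sets of $d$, you obtain $L=-\lap_\Sigma$ exactly. The underlying fact is the same: the ``family of critical points yields a Jacobi field'' argument is the Riccati identity in disguise.

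Your version is tighter at one point. The paper phrases the final transfer through the integrated identity with test function $1$, which by itself only gives $Q(1,1)=0$ for the second variation form $Q$ of $\mathcal A^h_g$. Stability genuinely requires that the Jacobi operators of $\mathcal A^h_g$ and $\mathcal A^H_g$ coincide on $\Sigma$, and your computation makes that explicit.

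The paper's route, on the other hand, directly produces the $4\pi$ identity used in the final section. From your $L=-\lap_\Sigma$ you recover it by integrating $L1=0$ over $\Sigma$ and applying the Schoen--Yau rearrangement and Gauss--Bonnet on the sphere $\Sigma$.
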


\begin{proof} First consider the functional 
\[
\mathcal A^H_g(\Omega) = \mathcal H^2_g(\bd^* \Omega) - \int (\chi_\Omega - \chi_{\Omega_0}) H\, dv. 
\]
Let $\Omega_t$ be the region enclosed by $\Sigma_t$.  Since by definition $H$ is the mean curvature of $\Sigma_t$, we see that each $\Omega_t$ is a critical point of $A^H_g$.  Since $\Sigma_t$ is a foliation by level sets of the distance function, it follows that each $\Omega_t$ is a degenerate stable critical point of $\mathcal A^H_g$ and that the function 1 achieves equality in the stability inequality for $\mathcal A^H_g$ on $\Sigma_t$.  In particular, taking $t = \tau$ and plugging 1 into the stability inequality we get 
\[
\int_\Sigma \frac{R_g}{2} + \frac{\vert \mathring A\vert^2}{2} + \frac 3 4 H^2 + \nu(H)\, da = 4\pi. 
\]
Now by construction we have $h = H$ and $\nu(h) = \nu(H)$ on $\Sigma$. Therefore we also have 
\[
\int_\Sigma \frac{R_g}{2} + \frac{\vert \mathring A\vert^2}{2} + \frac 3 4 h^2 + \nu(h)\, da = 4\pi. 
\]
This implies that $\Omega_0$ is a degenerate stable critical point for $\mathcal A^h_g$ with first eigenfunction equal to 1. 
\end{proof}

\subsection{The Approximating \texorpdfstring{$\mu$}{mu}-Bubbles} 

We now define analogous $\mu$-bubble functionals in $M_k$. 
We define functions $h_k$ on $f_k^{-1}(\{1<d<3\})$ by setting $h_k = h\circ f_k$ and consider the functionals $\mathcal A^{h_k}_{g_k}$ defined by 
\[
\mathcal A^{h_k}_{g_k}(\Omega) = \mathcal H^2_{g_k}(\bd^* \Omega) -\int (\chi_\Omega - \chi_{\Omega^k_0})h\, dv_k, \quad \Omega\in \mathcal C_k.
\]
Here $\Omega^k_0 = f_k^{-1}(\Omega_0)$ is a Caccioppoli set in $M_k$ by our choice of $\Omega_0$, and $\mathcal C_k$ is the set of all Caccioppoli sets in $M_k$ whose symmetric difference with $\Omega^k_0$ is compactly supported in $f_k^{-1}(\{1<d<3\})$. 
Since $f_k^{-1}(\{1<d<3\})$ is a manifold with boundary, and $\Omega^k_0$ is a Caccioppoli set containing a neighborhood of $f_k^{-1}(\Sigma_1)$ and disjoint from a neighborhood of $f_k^{-1}(\Sigma_3)$, and $h_k \to \infty$ near $f_k^{-1}(\Sigma_1)$ and $h_k\to -\infty$ near $f_k^{-1}(\Sigma_3)$ there will be a minimizer $\Omega_k$ for this functional by Proposition \ref{prop: mu-bubble regularity}.

\subsection{Convergence}

In this section, we show that the $\mu$-bubbles $\Omega_k$ converge to $\Omega$ in a suitable weak sense. We note that $\bd \Omega_k$ and $\Sigma$ are smooth surfaces, and so we shall write $\area_{g_k}(\bd \Omega_k)$ and $\area_g(\Sigma)$ accordingly. 

\begin{prop}
    We have $\mathcal A^{h_k}_{g_k}(\Omega_k) \to \mathcal A^h_g(\Omega_0)$ and $\mathcal A^h_g((f_k)_\sharp \Omega_k) \to \mathcal A^h_g(\Omega_0)$. 
\end{prop}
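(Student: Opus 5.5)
We prove this with a sandwich argument:
\[
\mathcal A^h_g(\Omega_0)\le \mathcal A^h_g((f_k)_\sharp\Omega_k)\le \mathcal A^{h_k}_{g_k}(\Omega_k)+o(1)\le \mathcal A^{h_k}_{g_k}(\Omega^k_0)+o(1)=\mathcal A^h_g(\Omega_0)+o(1).
\]

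The first inequality is Proposition \ref{Proposition:QuantitativeMinimality}, once we know $(f_k)_\sharp\Omega_k\in\mathcal C$. Since $\Omega_k\operatorname{\Delta}\Omega_0^k\subset\subset f_k^{-1}(\{1<d<3\})$ and $f_k$ maps compact sets to compact sets, the difference $\Omega_k\operatorname{\Delta}\Omega^k_0$ lies in $f_k^{-1}(K)$ for some compact $K\subset\{1<d<3\}$. Moreover $(f_k)_\sharp\Omega^k_0=\Omega_0$ up to a null set, because $\theta_{\Omega_0^k}=\theta_{M_k}$ on $\Omega_0$, vanishes off $\Omega_0$, and the degree is odd by the Corollary. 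Hence $(f_k)_\sharp\Omega_k\operatorname{\Delta}\Omega_0\subset K$.

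The last equality is also direct. Since $\Omega_0^k$ is the reference set, $\mathcal A^{h_k}_{g_k}(\Omega^k_0)=\area_{g_k}(\Gamma^k_\tau)$. By the choice of $\tau$, this converges to $\area_g(\Sigma)=\mathcal A^h_g(\Omega_0)$. The middle inequality $\mathcal A_{g_k}^{h_k}(\Omega_k)\le \mathcal A^{h_k}_{g_k}(\Omega^k_0)$ is minimality of $\Omega_k$.

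The main step is the second inequality. Proposition \ref{prop:area-ineq} gives $\mathcal H^2(\bd^*(f_k)_\sharp\Omega_k)\le \area_{g_k}(\bd\Omega_k)$. For the bulk terms we apply Proposition \ref{prop:integral-formula} with $\Omega=\Omega_k$ and $\Omega_0=\Omega_0^k$. This reduces the claim to showing that the error
\[
E_k=\int_{M_k}(\chi_{\Omega_k}-\chi_{\Omega_0^k})h_k(1-J_k)\,dv_k+\int_M\big((\theta_{\Omega_k}-\theta_{\Omega_0^k})-(\overline\theta_{\Omega_k}-\overline\theta_{\Omega^k_0})\big)h\,dv
\]
tends to zero, or at least satisfies $E_k\ge -o(1)$ with the correct sign.

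The obstacle is that $h$ is unbounded near $d=1$ and $d=3$, so we need an a priori localization. We would first show that $\Omega_k$ agrees with $\Omega_0^k$ on $f_k^{-1}(\{d<1+\delta\}\cup\{d>3-\delta\})$ for a fixed $\delta$ independent of $k$. The intended argument uses a barrier/calibration: by property (vi), $\tfrac34 h^2-|\nabla h|\ge 0$ near the ends, and $h_k=h\circ f_k$ with $|\nabla h_k|\le|\nabla h|\circ f_k$. Concretely, we would take a foliation by the level sets $\Gamma^k_t$ near $t=1$. The vector field $\nabla(d\circ f_k)$ has divergence bounded, in the appropriate sense, by $H\circ f_k$ plus a correction, and $h\to\infty$ dominates that correction. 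Cutting $\Omega_k$ by the set $\{d\circ f_k<t\}$ then decreases $\mathcal A^{h_k}$ for $t$ near $1$. If this barrier argument proves too delicate, a fallback is to modify $h$ to be bounded and simply cap it. Once $h$ is bounded by some $C_\delta$ on the relevant region, we bound the first term of $E_k$ by $C_\delta\int_{M_k}(1-J_k)\,dv_k=C_\delta(\vol(M_k)-\int_M\theta_{M_k})\le C_\delta(\vol M_k-\vol M)\to0$. We bound the second term by Proposition \ref{prop:multiplicity-bound} by $2C_\delta\int_M(\theta_{M_k}-\overline\theta_{M_k})\,dv$. This is at most $2C_\delta\int_{M\setminus P_k}\theta_{M_k}\,dv\le 2C_\delta\big(\vol M_k-\vol P_k\big)\to 0$ by Proposition \ref{prop:multiplicity} and the area formula.

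This closes the sandwich, and both convergence statements follow.
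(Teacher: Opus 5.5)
Your overall structure (the sandwich, the check that $(f_k)_\sharp\Omega_k\in\mathcal C$, the competitor $\Omega^k_0$, and the error estimates where $h$ is bounded) matches the paper. However, the step handling the unbounded part of $h$, which is the heart of the proposition, is not established. Your claim that $\Omega_k=\Omega^k_0$ on $f_k^{-1}(\{d<1+\delta\}\cup\{d>3-\delta\})$ with $\delta$ independent of $k$ is only sketched, and the proposed mechanism does not work. The divergence of $\nabla(d\circ f_k)$, or of its normalization (the mean curvature of the level sets $\Gamma^k_t=f_k^{-1}(\Sigma_t)$), involves second derivatives of $f_k$. A $1$-Lipschitz bound gives no control of these, let alone uniformly in $k$. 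Property (vi) of $h$ is a pointwise inequality used in the stability argument and supplies no barrier in $M_k$. For each fixed $k$ the minimizer does agree with $\Omega^k_0$ near the ends, but only on a $k$-dependent neighborhood. The fallback of capping $h$ does not prove the stated proposition, which concerns the fixed $h$. Capping also destroys the blow-up at $f_k^{-1}(\Sigma_1)$ and $f_k^{-1}(\Sigma_3)$ that guarantees a minimizer $\Omega_k$ with $\Omega_k\operatorname{\Delta}\Omega^k_0\subset\subset f_k^{-1}(\{1<d<3\})$, and it loses property (vi), which is needed later.

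The missing idea is that no localization is needed, because only a one-sided bound on $E_k$ is required. You also have the direction reversed. Since $\mathcal A^h_g((f_k)_\sharp\Omega_k)\le\mathcal A^{h_k}_{g_k}(\Omega_k)+E_k$, the sandwich needs $\limsup E_k\le 0$, not $E_k\ge -o(1)$; the lower bound then follows from the sandwich itself. The paper argues as follows.
\begin{itemize}
\item Fix $A=\{a<d<b\}$ with $h>0$ on $\{d\le a\}$ and $h<0$ on $\{d\ge b\}$. On $A$, your bounded-$h$ estimates apply verbatim.
\item Off $f_k^{-1}(A)$, one has $(\chi_{\Omega_k}-\chi_{\Omega^k_0})h_k\le 0$ pointwise, because $\Omega^k_0\supset f_k^{-1}(\{d<a\})$ and $\Omega^k_0\cap f_k^{-1}(\{d>b\})=\emptyset$. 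Together with $1-J_k\ge 0$, the Jacobian term is $\le 0$ there.
\item For the multiplicity term outside $\Sigma_b$: $\theta_{\Omega^k_0}=0$, $\theta_{\Omega_k}-\overline\theta_{\Omega_k}\ge 0$ and $h<0$.
\item For the multiplicity term inside $\Sigma_a$: all $\ell$ preimages lie in $\Omega^k_0$, with $\ell$ odd almost everywhere. If $j$ of them lie in $\Omega_k$, then $(j-(j\bmod 2))-(\ell-1)\le 0$, while $h>0$.
\end{itemize}
Hence both pieces of $E_k$ are non-positive off $A$ and tend to zero on $A$. This gives $\limsup E_k\le 0$ and closes the sandwich without any a priori control of $\Omega_k$ near the ends.
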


\begin{proof}
    First, note that $\Omega^k_0$ is a valid competitor for $\mathcal A^{h_k}_{g_k}$ and that $\bd \Omega^k_0 = \Gamma^k_\tau$.  Hence we have  
    \[
    \mathcal A^{h_k}_{g_k}(\Omega_k) \le \mathcal A^{h_k}_{g_k}(\Omega^k_0) = \area_{g_k}(\Gamma^k_\tau) \to \area_g(\Sigma) 
    \]
    as $k\to \infty$.  

    Now consider the pushforward  $(f_k)_\sharp \Omega_k$. Since $f_k:M_k\to M$ has degree one modulo two, by definition of degree of continuous mapping, we have  
    \begin{align}\label{eq: degree one map}
    (f_k)_{\#}\big(f_k^{-1}(\Omega_t)\big)=\Omega_t \quad \text{as Caccioppoli sets}
    \end{align}
    for almost every $t\in[1,3]$.
    Since $\Omega_k$ is a valid competitor for $\mathcal A_{g_k}^{h_k}$, it contains a neighborhood of $f_k^{-1}(\Sigma_1)$ and hence it contains $f_k^{-1}(\Omega_{1+\delta})$ for which $\eqref{eq: degree one map}$ holds. 
    It follows that $(f_k)_{\#}\Omega_k$ is a valid competitor for $\mathcal A_g^h$, since it contains a neighborhood of $\Sigma_1$ and disjoint from a neighborhood of $\Sigma_3$.

Recall that $\Omega^k_0 = f_k^{-1}(\Omega_0)$ and that $(f_k)_\sharp \Omega^k_0 = \Omega_0$.  Since $\Omega_0$ minimizes $\mathcal A^h_g$, we deduce that
\begin{align*}
\area_g(\Sigma) = \mathcal A^h_g(\Omega_0) &\le \mathcal A^h_g((f_k)_\sharp \Omega_k) \phantom{\int}\\
        &= \mathcal H^2_g(\bd^*(f_k)_\sharp  \Omega_k) - \int_M (\chi_{(f_k)_\sharp \Omega_k} - \chi_{\Omega_0})h\, dv\\
        &\le \area_{g_k}(\bd \Omega_k) - \int_{M} (\chi_{(f_k)_\sharp \Omega_k} - \chi_{(f_k)_\sharp \Omega_0^k})h\, dv\\
        &= \mathcal A^{h_k}_{g_k}(\Omega_k) + \int_{M_k} (\chi_{\Omega_k}-\chi_{\Omega_0^k}) (h\circ f_k)\, dv_k - \int_M (\chi_{(f_k)_\sharp \Omega_k} - \chi_{(f_k)_\sharp \Omega_0^k})h\, dv.
        \end{align*}
Here we used Proposition \ref{prop:area-ineq} to get from the second to the third line. 
Next, we examine the bulk difference in more detail.  Using Proposition \ref{prop:integral-formula}, we rewrite this as 
\begin{align*}
&I_k := \int_{M_k} (\chi_{\Omega_k}-\chi_{\Omega_0^k}) (h\circ f_k)\, dv_k - \int_M (\chi_{(f_k)_\sharp \Omega_k} - \chi_{(f_k)_\sharp \Omega_0^k})h\, dv \\
&\qquad = \int_{M_k} (\chi_{\Omega_k} - \chi_{\Omega^k_0}) (h\circ f_k) (1-J_f)\, dv_{k} + \int_{M} ((\theta_{\Omega_k} - \overline \theta_{\Omega_k}) - (\theta_{\Omega^k_0} - \overline \theta_{\Omega^k_0}))h\, dv. 
\end{align*} 
By the previous calculation and the fact that $\limsup \mathcal A^{h_k}_{g_k}(\Omega_k) \le \area_g(\Sigma)$, we must have $\liminf I_k \ge 0$. 

On the other hand, we claim that $\limsup I_k \le 0$.   Split $\{1 < d < 3\}$ into two regions.  Define $A = \{a < d < b\}$ where $a > 1$ and $b < 3$ are chosen so that $h > 0$ inside $\Sigma_a$ and $h < 0$ outside $\Sigma_b$.  Then we have 
\[
\int_{f_k^{-1}(A)} (\chi_{\Omega_k} - \chi_{\Omega^k_0}) (h\circ f_k) (1-J_{f_k})\, dv_k \to 0 
\]
since $h\circ f_k$ is bounded on $f_k^{-1}(A)$ and $\vol(M_k,g_k) \to \vol(M,g)$.  On the other hand, since $f_k$ is 1-Lipschitz, we have $1-J_{f_k} \ge 0$ and we see that 
\[
(\chi_{\Omega_k} - \chi_{\Omega_0^k}) (h\circ f_k) \le 0
\]
outside $f_k^{-1}(A)$. Thus we have 
\[
\int_{M_k - f_k^{-1}(A)} (\chi_{\Omega_k} - \chi_{\Omega^k_0})(h\circ f_k)(1-J_{f_k})\, dv_k \le 0. 
\]
Now we examine the multiplicity term. Again we split into an integral over $A$ and the remainder. On $A$, we have 
\[
\int_{A} ((\theta_{\Omega_k} - \overline \theta_{\Omega_k}) - (\theta_{\Omega^k_0} - \overline \theta_{\Omega^k_0}))h\, dv = \int_A (\theta_{\Omega_k} - \overline \theta_{\Omega_k}) h\, dv - \int_A (\theta_{\Omega^k_0} - \overline \theta_{\Omega^k_0})h\, dv. 
\]
Since $h$ is bounded on $A$, we can use  Proposition \ref{prop:multiplicity-bound} and Proposition \ref{prop:multiplicity} to estimate 
\begin{gather*}
\left\vert \int_A (\theta_{\Omega_k} - \overline \theta_{\Omega_k})h\, dv\right\vert \le \sup_A \vert h\vert \int_A (\theta_{M_k} - \overline \theta_{M_k})\, dv \to 0,\\
\left\vert \int_A (\theta_{\Omega^k_0} - \overline \theta_{\Omega^k_0})h\, dv\right\vert \le \sup_A \vert h\vert \int_A (\theta_{M_k} - \overline \theta_{M_k})\, dv \to 0.
\end{gather*}
It remains to understand what happens outside of $A$. We claim that 
\[
((\theta_{\Omega_k} - \overline \theta_{\Omega_k}) - (\theta_{\Omega^k_0} - \overline \theta_{\Omega^k_0}))h \le 0 
\]
outside $A$. First consider a point $x$ lying outside $\Sigma_b$.  Then we have $\theta_{\Omega_k} - \overline \theta_{\Omega_k} \ge 0$ and $\theta_{\Omega^k_0} = \overline \theta_{\Omega^k_0} = 0$ and $h \le 0$ so the sign is as claimed.  Second, consider a point $x$ lying inside $\Sigma_a$.  Then $x \in \Omega_0$ and all the preimages of $x$ lie in $\Omega^k_0$.  We can suppose there are only finitely many preimages $y_1,\hdots,y_\ell \in \Omega_{0}^k$ and we note that $\ell$ must be odd. Let us suppose that $j$ out of these $\ell$ points belong to $\Omega_k$.  Then 
\begin{align*}
(\theta_{\Omega_k} - \overline \theta_{\Omega_k}) - (\theta_{\Omega^k_0} - \overline \theta_{\Omega^k_0}) & = (j - j\text{ mod } 2) - (\ell -1) \\
&= (j - \ell) + (1- j\text{ mod } 2). 
\end{align*} 
If $j$ is odd, then we have $(j-\ell) + (1-j\text{ mod } 2) = j-\ell \le 0$. On the other hand, if $j$ is even then $j +1 \le \ell$ since $\ell$ is odd, and again we conclude that $(j-\ell) + (1-j\text{ mod } 2) = j-\ell + 1 \le 0$.  Since $h > 0$ inside $\Sigma_a$, the sign is as claimed inside $\Sigma_a$ as well.  This verifies the claim, and it follows that 
\[
\int_{M-A} ((\theta_{\Omega_k} - \overline \theta_{\Omega_k}) - (\theta_{\Omega^k_0} - \overline \theta_{\Omega^k_0}))h\, dv \le 0. 
\]
Combining these observations, we deduce that $\limsup I_k \le 0$. 

Since we have already shown that $\liminf I_k \ge 0$, it follows that $I_k \to 0$ as $k\to \infty$.  Recall from earlier that we have shown $\limsup \mathcal A^h_{g_k}(\Omega_k) \le \mathcal A^h_g(\Omega_0)$ and that 
\[
\mathcal A^h_g(\Omega_0) \le \mathcal A^h_g((f_k)_\sharp \Omega_k) = \mathcal A^{h_k}_{g_k}(\Omega_k) + I_k.
\]
Since $I_k\to 0$, we can then deduce that $\mathcal A^{h_k}_{g_k}(\Omega_k) \to \mathcal A^h_g(\Omega_0)$ and $\mathcal A^h_g((f_k)_\sharp \Omega_k) \to \mathcal A^h_g(\Omega_0)$. 
\end{proof}

\begin{prop}
    We have $(f_k)_\sharp \Omega_k \to \Omega_0$ as Caccioppoli sets and $\vert \bd  (f_k)_\sharp \Omega_k\vert  \to \vert \bd \Omega_0\vert$ as varifolds in $\{d\le 5\}$. 
\end{prop}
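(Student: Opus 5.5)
Write $\Omega'_k := (f_k)_\sharp \Omega_k$. The plan is to combine the quantitative minimality of Proposition \ref{Proposition:QuantitativeMinimality} with the convergence $\mathcal A^h_g(\Omega'_k)\to \mathcal A^h_g(\Omega_0)$ from the previous proposition, and then upgrade weak convergence to varifold convergence via Proposition \ref{prop:area-converges}.

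First we show $\vol(\Omega'_k \operatorname{\Delta}\Omega_0)\to 0$. Fix a small neighborhood $U$ of $\Sigma$. By Proposition \ref{Proposition:QuantitativeMinimality},
\[
c(U)\,\vol((\Omega'_k\operatorname{\Delta}\Omega_0)-U) \le \mathcal A^h_g(\Omega'_k)-\mathcal A^h_g(\Omega_0)\to 0,
\]
so $\limsup_k \vol(\Omega'_k\operatorname{\Delta}\Omega_0)\le \vol(U)$. Since $U$ can be taken to be a thin tubular neighborhood $\{|d-\tau|<\delta\}$ with volume tending to $0$ as $\delta\to 0$, we obtain $\M(\lb\Omega'_k\rb-\lb\Omega_0\rb)\to 0$. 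This is convergence as Caccioppoli sets. It also gives flat convergence $\bd\lb\Omega'_k\rb\to\bd\lb\Omega_0\rb$ in $\mathcal Z_2$, with filling $\lb\Omega'_k\rb-\lb\Omega_0\rb$ of mass going to $0$.

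Next we handle the areas. Split
\[
\mathcal A^h_g(\Omega'_k)=\mathcal H^2_g(\bd^*\Omega'_k)-\int(\chi_{\Omega'_k}-\chi_{\Omega_0})h\,dv .
\]
The bulk term is the main obstacle, because $h$ is unbounded near $\Sigma_1$ and $\Sigma_3$, so $L^1$ convergence of the characteristic functions alone does not force the integral to vanish. We deal with this using signs. Choose $a,b$ as in the previous proof, with $h>0$ inside $\Sigma_a$ and $h<0$ outside $\Sigma_b$. On $\{d\le a\}\cup\{d\ge b\}$ the integrand $-(\chi_{\Omega'_k}-\chi_{\Omega_0})h$ is nonnegative, because there $\Omega_0$ contains $\{d\le a\}$ and misses $\{d\ge b\}$. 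On $\{a<d<b\}$, $h$ is bounded, so that part of the integral tends to $0$ by the first step. Hence the bulk contribution to $\mathcal A^h_g(\Omega'_k)$ has $\liminf\ge 0$. This yields $\limsup\mathcal H^2(\bd^*\Omega'_k)\le \area(\Sigma)$. By lower semicontinuity of mass under flat convergence we also have $\liminf\mathcal H^2(\bd^*\Omega'_k)\ge\area(\Sigma)$. Therefore $\M(\bd\lb\Omega'_k\rb)\to\M(\bd\lb\Omega_0\rb)$, and as a byproduct the nonnegative far-region bulk terms also tend to $0$.

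Finally, Proposition \ref{prop:area-converges} applied to $T_k=\bd\lb\Omega'_k\rb\to T=\bd\lb\Omega_0\rb=\lb\Sigma\rb$ gives $|\bd\Omega'_k|\to|\bd\Omega_0|$ as varifolds. All supports lie in $\{1<d<3\}\subset\{d\le 5\}$, so the convergence holds in $\{d\le 5\}$ as claimed.
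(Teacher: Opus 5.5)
Your proposal is correct and follows essentially the same route as the paper. It uses quantitative minimality (Proposition \ref{Proposition:QuantitativeMinimality}) for convergence as Caccioppoli sets, then the sign splitting into $\{a<d<b\}$ and its complement to get $\limsup \int(\chi_{(f_k)_\sharp\Omega_k}-\chi_{\Omega_0})h\,dv\le 0$, then lower semicontinuity of mass and Proposition \ref{prop:area-converges}. The only difference is that you spell out the shrinking-neighborhood step (letting $U$ be a thin tube around $\Sigma$ with $\vol(U)\to 0$), which the paper leaves implicit.
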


\begin{proof}
    The convergence as Caccioppoli sets follows from Proposition \ref{Proposition:QuantitativeMinimality} since $\mathcal A^h_g((f_k)_\sharp \Omega_k) \to \mathcal A^h_g(\Omega_0)$ and $\Omega_0$ is the unique minimizer for $\mathcal A^h_g$.
    It follows that $\bd \lb (f_k)_\sharp \Omega_k\rb \to \bd \lb \Omega_0\rb$ as currents. 
    Then by Proposition \ref{prop:area-converges}, to get convergence as varifolds, it suffices to show that $\M(\bd \lb (f_k)_\sharp \Omega_k\rb)  \to \area_g(\Sigma)$. Observe that 
    \[
    \mathcal A^h_g((f_k)_\sharp \Omega_k) = \M(\bd \lb (f_k)_\sharp \Omega_k\rb ) - \int (\chi_{(f_k)_\sharp \Omega_k} - \chi_{\Omega_0})h\, dv \to \mathcal A^h_g(\Omega_0) = \area_g(\Sigma). 
    \]
    Since $\liminf \M(\bd \lb (f_k)_\sharp \Omega_k\rb ) \ge \area_g(\Sigma)$, it is enough to show that 
    \[
    \limsup \int (\chi_{(f_k)_\sharp \Omega_k}-\chi_{\Omega_0})h\, dv \le 0. 
    \]
    To that end, again consider the annulus $A$ from before. We have 
    \begin{align*} 
    \limsup \int (\chi_{(f_k)_\sharp \Omega_k}-\chi_{\Omega_0})h\, dv & = \int (\chi_{(f_k)_\sharp \Omega_k \cap A}-\chi_{\Omega_0 \cap A})h\, dv\\ 
    &\qquad + \int (\chi_{(f_k)_\sharp \Omega_k-A}-\chi_{\Omega_0-A})h\, dv.
    \end{align*} 
    The first integral on the right goes to 0 since $(f_k)_\sharp \Omega_k \to \Omega_0$ as Caccioppoli sets and $h$ is bounded on $A$.  We claim that the second integral is non-positive. Indeed, this follows from the fact that $\chi_{\Omega_0-A} = 1$ and $h > 0$ inside $\Sigma_a$, and $\chi_{\Omega_0-A} = 0$ and $h < 0$ outside $\Sigma_b$.  Thus we indeed have 
\[
    \limsup \int (\chi_{(f_k)_\sharp \Omega_k}-\chi_{\Omega_0})h\, dv \le 0. 
    \]
    Hence $\M(\bd \lb (f_k)_\sharp \Omega_k\rb ) \to \area_g(\Sigma)$, and we obtain the varifold convergence. We note for future use that we have also demonstrated that 
    \[
    \int (\chi_{(f_k)_\sharp \Omega_k}-\chi_{\Omega_0})h\, dv \to 0
    \]
    as well. 
\end{proof}

\begin{prop}
    We have $\area_{g_k}(\bd \Omega_k)\to \area_g(\Sigma)$. 
\end{prop}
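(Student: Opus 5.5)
We combine the two convergence statements already proved. By the previous propositions we have
\[
\mathcal A^{h_k}_{g_k}(\Omega_k) = \area_{g_k}(\bd \Omega_k) - \int_{M_k} (\chi_{\Omega_k} - \chi_{\Omega^k_0})(h\circ f_k)\, dv_k \to \area_g(\Sigma).
\]
So it suffices to show that the bulk term $B_k := \int_{M_k} (\chi_{\Omega_k} - \chi_{\Omega^k_0})(h\circ f_k)\, dv_k$ tends to $0$. We will prove this by comparing $B_k$ with the corresponding bulk term in $M$.

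Recall the quantity $I_k$ from the proof that $\mathcal A^{h_k}_{g_k}(\Omega_k)\to \mathcal A^h_g(\Omega_0)$. It satisfies
\[
I_k = B_k - \int_M (\chi_{(f_k)_\sharp \Omega_k} - \chi_{(f_k)_\sharp \Omega^k_0})h\, dv ,
\]
and that proof showed $I_k \to 0$. Since $(f_k)_\sharp \Omega^k_0 = \Omega_0$, the subtracted integral is exactly $\int_M (\chi_{(f_k)_\sharp \Omega_k} - \chi_{\Omega_0})h\, dv$. This integral was shown to tend to $0$ at the end of the proof of the varifold convergence proposition. Hence
\[
B_k = I_k + \int_M (\chi_{(f_k)_\sharp \Omega_k} - \chi_{\Omega_0})h\, dv \to 0 .
\]
It follows that
\[
\area_{g_k}(\bd\Omega_k) = \mathcal A^{h_k}_{g_k}(\Omega_k) + B_k \to \area_g(\Sigma) + 0 = \area_g(\Sigma).
\]

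As an independent check on the lower bound, Proposition \ref{prop:area-ineq} gives $\area_{g_k}(\bd\Omega_k) \ge \M(\bd\lb (f_k)_\sharp\Omega_k\rb)$. The right-hand side tends to $\area_g(\Sigma)$ by the previous proposition, so $\liminf_k \area_{g_k}(\bd\Omega_k) \ge \area_g(\Sigma)$.

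The only delicate point is bookkeeping. One must make sure that the identity defining $I_k$, established via Proposition \ref{prop:integral-formula}, uses $(f_k)_\sharp\Omega^k_0 = \Omega_0$. That equality holds because $\tau$ was chosen so that \eqref{eq: degree one map} applies with $t=\tau$. Once that is confirmed, the argument is immediate and no further estimates are needed.
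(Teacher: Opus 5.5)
Your argument is correct and is exactly the paper's proof: you write the bulk term as $I_k + \int_M (\chi_{(f_k)_\sharp\Omega_k}-\chi_{\Omega_0})h\,dv$ and invoke the two earlier results showing each piece tends to zero. The lower-bound check is redundant. Also, the identity $(f_k)_\sharp\Omega^k_0=\Omega_0$ does not depend on the choice of $\tau$: for any measurable $W\subset M$ one has $(f_k)_\sharp f_k^{-1}(W)=W$, because almost every point of $M$ has an odd number of preimages under $f_k$.
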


\begin{proof}
    We know that $\mathcal A^{h_k}_{g_k}(\Omega_k) \to \mathcal A^h_g(\Omega_0) = \area_g(\Sigma)$, and we have 
    \[
    \mathcal A^{h_k}_{g_k}(\Omega_k) = \area_{g_k}(\bd \Omega_k) - \int (\chi_{\Omega_k}-\chi_{\Omega^k_0}) (h\circ f_k)\, dv_k. 
    \]
    Hence it is equivalent to show that 
    \[
    \int (\chi_{\Omega_k}-\chi_{\Omega^k_0}) (h\circ f_k)\, dv_k \to 0
    \]
    as $k\to \infty$. We rewrite this as 
    \[
    \int (\chi_{\Omega_k}-\chi_{\Omega^k_0}) (h\circ f_k)\, dv_k = I_k + \int (\chi_{(f_k)_\sharp \Omega_k}-\chi_{\Omega_0})h\, dv. 
    \]
    We have already shown that both terms on the right go to zero and this concludes the proof. 
\end{proof}

\begin{prop}
    Let $U$ be a small tubular neighborhood of $\Sigma$. Then we have $\area_{g_k}(\bd \Omega_k - f_k^{-1}(U)) \to 0$ as $k\to \infty$. 
\end{prop}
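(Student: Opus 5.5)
The idea is to compare the area of $\bd\Omega_k$ with the mass of its pushforward, localized away from $U$. Write $T_k = \bd\lb\Omega_k\rb$; it is the current of the smooth surface $\bd\Omega_k$. Set $S_k = (f_k)_\sharp T_k = \bd\lb(f_k)_\sharp\Omega_k\rb$. By the previous proposition, $|S_k|\to|\bd\Omega_0|$ as varifolds, and by the proposition just before it, $\area_{g_k}(\bd\Omega_k)\to\area_g(\Sigma)$.

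Choose a slightly smaller tubular neighborhood $U'\subset\subset U$ of $\Sigma$. Since $\|S_k\|\to \mathcal H^2\llcorner\Sigma$ weakly as measures (mass measures converge under varifold convergence) and $\Sigma\subset U'$, we get $\|S_k\|(U')\to\area_g(\Sigma)$, or at least $\liminf_k \|S_k\|(\overline{U'})\ge \area_g(\Sigma)$ by lower semicontinuity on open sets applied to $U'$.

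The key step is a localized version of the mass inequality \eqref{eq:lip}. For a 1-Lipschitz map and any Borel set $B\subset M$, the pushforward of the restriction satisfies
\[
\|S_k\|(B)\le \M\big((f_k)_\sharp(T_k\llcorner f_k^{-1}(B))\big)\le \|T_k\|(f_k^{-1}(B)) = \area_{g_k}(\bd\Omega_k\cap f_k^{-1}(B)).
\]
This holds because $(f_k)_\sharp T_k$ restricted to $B$ equals $(f_k)_\sharp(T_k\llcorner f_k^{-1}(B))$, and mod-$2$ cancellation only decreases mass. Alternatively, one can apply the area formula for the smooth surface $\bd\Omega_k$ and use that the tangential Jacobian is at most $1$. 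Applying this with $B=U'$ gives
\[
\area_{g_k}(\bd\Omega_k\cap f_k^{-1}(U))\ge \area_{g_k}(\bd\Omega_k\cap f_k^{-1}(U'))\ge \|S_k\|(U').
\]
Hence
\[
\area_{g_k}(\bd\Omega_k - f_k^{-1}(U)) \le \area_{g_k}(\bd\Omega_k) - \|S_k\|(U'),
\]
and taking $\limsup$ gives at most $\area_g(\Sigma)-\area_g(\Sigma)=0$.

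The main care point is the localized mass inequality, specifically checking that restriction commutes with pushforward in the mod-$2$ setting. I would handle it via the area formula on the smooth surface $\bd\Omega_k$: the density of $\|S_k\|$ at $x$ is at most the number of preimages counted with tangential Jacobians $\le 1$. This is exactly the fact used in Proposition~\ref{prop:area-ineq}, applied to the restricted piece. A second, minor point is using an open set $U'$ with $\Sigma\subset U'$ so that weak convergence of measures yields the $\liminf$ lower bound.
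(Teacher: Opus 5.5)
Your argument is correct and is essentially the paper's proof, run directly rather than by contradiction. Both rest on the localized inequality $\|(f_k)_\sharp \bd\lb\Omega_k\rb\|(U)\le \area_{g_k}(\bd\Omega_k\cap f_k^{-1}(U))$, combined with the varifold convergence $|\bd(f_k)_\sharp\Omega_k|\to|\bd\Omega_0|$ and $\area_{g_k}(\bd\Omega_k)\to\area_g(\Sigma)$. Your justification of that inequality, via $(f_k)_\sharp T_k\llcorner B=(f_k)_\sharp(T_k\llcorner f_k^{-1}(B))$ or the area formula on $\bd\Omega_k$, fills in a step the paper leaves implicit; the auxiliary $U'$ is harmless but unnecessary, since $U$ is already open and contains $\Sigma$.
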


\begin{proof}
Suppose for contradiction that $\liminf \area_{g_k}(\bd \Omega_k - f_k^{-1}(U)) \ge 2\eps > 0$. Since we have already seen that $\area_{g_k}(\bd \Omega_k) \to \area_g(\Sigma)$ as $k\to \infty$, this implies that 
\[
\area_{g_k}(\bd \Omega_k \cap f_k^{-1}(U)) \le \area_g(\Sigma) - \eps
\]
for large $k$. But then  
\[
\| \bd (f_k)_\sharp \lb \Omega_k \rb \|(U) \le \area_{g_k}(\bd \Omega_k \cap f_k^{-1}(U)) \le \area_g(\Sigma) - \eps
\]
and this contradicts the varifold convergence $\vert \bd (f_k)_\sharp \Omega_k\vert \to \vert \bd \Omega_0\vert$. 
\end{proof}

\begin{prop}
    There is a connected component $\Gamma_k$ of $\bd \Omega_k$ such that $\vert (f_k)_\sharp \lb \Gamma_k\rb \vert\to \vert \Sigma\vert$ as varifolds. 
\end{prop}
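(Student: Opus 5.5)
The plan is to decompose $\bd\Omega_k$ into its connected components and push each one forward separately. Write $\bd\Omega_k=\Gamma^1_k\cup\dots\cup\Gamma^{N_k}_k$ and $T^i_k=(f_k)_\sharp\lb\Gamma^i_k\rb$. Pushforward commutes with boundary, so each $T^i_k$ is a mod $2$ cycle in the annulus $W=\{1<d<3\}$, and
\[
\sum_i T^i_k=(f_k)_\sharp\bd\lb\Omega_k\rb=\bd\lb(f_k)_\sharp\Omega_k\rb\to\lb\Sigma\rb .
\]
I will use two mass facts. First, by \eqref{eq:lip}, $\M(T^i_k)\le\area_{g_k}(\Gamma^i_k)$, so $\sum_i\M(T^i_k)\le\area_{g_k}(\bd\Omega_k)\to\area_g(\Sigma)$. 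Second, $\M(\sum_iT^i_k)\to\area_g(\Sigma)$ by the varifold convergence in the preceding proposition. Hence the measures $\sum_i\|T^i_k\|$ have total mass tending to $\area_g(\Sigma)$ and dominate $\|\bd\lb(f_k)_\sharp\Omega_k\rb\|$, which converges to $\mathcal H^2\llcorner\Sigma$. So $\sum_i\|T^i_k\|\to\mathcal H^2\llcorner\Sigma$ weakly. This is consistent with the previous proposition: area outside $f_k^{-1}(U)$ is negligible. In particular, any subsequential flat limit of any single $T^i_k$, or of any subsum, is supported in $\Sigma$.

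The main obstacle is that $N_k$ may be unbounded, so we cannot simply pass to limits component by component. I would fix a small $\eta>0$ and split the components into big ones ($\M(T^i_k)\ge\eta$) and small ones. There are at most $2\area_g(\Sigma)/\eta$ big ones. For the small ones, I use that $H_2(W,\Z_2)\cong\Z_2$ is generated by $\Sigma$, and that $W$ satisfies an isoperimetric inequality for mod 2 cycles. Concretely, a cycle of mass $m<\tfrac12\area_g(\Sigma)$ is null-homologous and bounds a $3$-chain of mass at most $Cm^{3/2}$. This can be justified by a deformation to a Lipschitz neighborhood retract, or by comparison with the nearly Euclidean annulus. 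Applying this to each small $T^i_k$ and summing, the small part $R_k$ bounds a chain of mass at most $C\eta^{1/2}\sum_i\M(T^i_k)\le C'\eta^{1/2}$.

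For the big components, after passing to a subsequence the number of them is a constant $L$. Each one converges in the flat topology by compactness, and each limit is supported in $\Sigma$. By the constancy theorem each limit is either $0$ or $\lb\Sigma\rb$. Their sum differs from $\lb\Sigma\rb$ by the limit of $R_k$, which is flat-close to $0$ within $C'\eta^{1/2}$. Choosing $\eta$ so small that $C'\eta^{1/2}$ is much less than the flat distance between $0$ and $\lb\Sigma\rb$, the sum of the limits must be $\lb\Sigma\rb$. So an odd number, in particular at least one, of the big components has limit $\lb\Sigma\rb$. Let $\Gamma_k$ be that component.

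Finally, we upgrade flat convergence of $(f_k)_\sharp\lb\Gamma_k\rb$ to varifold convergence via Proposition \ref{prop:area-converges}. Lower semicontinuity gives $\liminf\M((f_k)_\sharp\lb\Gamma_k\rb)\ge\area_g(\Sigma)$. The total-mass bound $\sum_i\M(T^i_k)\to\area_g(\Sigma)$ gives the matching $\limsup$. Hence $\vert(f_k)_\sharp\lb\Gamma_k\rb\vert\to\vert\Sigma\vert$, first along the subsequence. A standard subsequence argument, or simply relabeling as done elsewhere in the paper, then finishes. As a byproduct, all the other components have pushforward mass tending to zero.
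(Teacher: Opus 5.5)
Your argument is correct, but it handles the many small components differently from the paper. The paper uses no isoperimetric inequality. It takes $\Gamma_k$ to be the component whose pushforward has the largest mass and argues by contradiction, using only compactness, lower semicontinuity of mass, and the constancy theorem. The key observation is that any subcollection of components whose pushforward masses total at most $\area_g(\Sigma)-\delta$ has all subsequential limits equal to $0$: such a limit is supported in $\Sigma$ but has mass below $\area_g(\Sigma)$. The paper then splits into two cases. If the largest component keeps a definite mass, the remaining components form such a subcollection. If all component masses tend to zero, one splits the components into two groups, each carrying about half the total. Either way the full pushforward would converge to $0$ rather than $\lb\Sigma\rb$.

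You instead control all small components at once in flat norm via $\M(S)\le C\M(T)^{3/2}$; the superlinear exponent is exactly what makes the total filling mass $O(\eta^{1/2})$. You then do a parity count over boundedly many big components. This costs an ingredient the paper never sets up. Also, your $\Gamma_k$ is chosen only along a subsequence. To get the full sequence, redefine $\Gamma_k$ as the largest-mass component, which by your byproduct eventually agrees with your choice. With the paper's canonical choice, the contradiction argument applies verbatim to every subsequence. In return, your argument shows more explicitly that exactly one component carries the limit.

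One statement should be corrected. It is not true that every cycle in $W$ of mass less than $\tfrac12\area_g(\Sigma)$ is null-homologous in $W$. In the rescaled, nearly Euclidean metric, $\Sigma_{1+\delta}$ has area about $4\pi$, well below $\tfrac12\area_g(\Sigma)\approx 8\pi$, yet it generates $H_2(W,\Z_2)$. This does not hurt your proof, because you only apply the inequality to cycles of mass at most $\eta$. In any case you may fill inside the ball $\{d<3\}$, which is bi-Lipschitz to a Euclidean ball with constant near $1$, with no mass threshold at all; that is all the flat-norm argument in $M$ needs.
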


\begin{proof}
    We know that $\bd \lb (f_k)_\sharp \Omega_k\rb  = (f_k)_\sharp \lb \bd \Omega_k\rb $ converges to $\lb \Sigma\rb$ as currents. Let $\Gamma_k$ be the component of $\bd \Omega_k$ for which $(f_k)_\sharp \lb \Gamma_k\rb$ has the largest mass in $M$. By the compactness theorem, after passing to a subsequence, $(f_k)_\sharp \lb \Gamma_k\rb $ will converge to some limiting current $T$.  Moreover, since $\vert (f_k)_\sharp \lb \bd \Omega_k\rb\vert$ converges to $\vert \Sigma\vert$ as varifolds, the current $T$ is necessarily supported in $\Sigma$. Then by the constancy theorem either $T = \lb \Sigma\rb $ or $T = 0$. We claim that $T = \lb \Sigma\rb$. 

    Suppose to the contrary that $T = 0$. Then we consider two subcases and show that both lead to contradiction. First suppose that $\liminf \M((f_k)_\sharp \lb \Gamma_k\rb ) > 0$. Then we let $\Lambda_k$ be the union of all remaining components of $\bd \Omega_k$ and note that $\limsup \M((f_k)_\sharp \lb \Lambda_k\rb ) < \area(\Sigma)$. After passing to a subsequence $(f_k)_\sharp \lb \Lambda_k\rb $ will converge to a limiting current $S$.  By the same argument as before, $S$ is supported in $\Sigma$ and so by the constancy theorem either $S = 0$ or $S = \lb \Sigma \rb$. Since the mass of $(f_k)_\sharp \lb \Lambda_k\rb$ is always a definite amount less than $\area_g(\Sigma)$, we must have $S = 0$. But then since $(f_k)_\sharp \lb \Gamma_k\rb \to 0$ and $(f_k)_\sharp \lb \bd \Omega_k\rb  = (f_k)_\sharp \lb \Gamma_k\rb  + (f_k)_\sharp \lb \Lambda_k\rb $, this implies $(f_k)_\sharp \lb \bd \Omega_k\rb \to 0$ which is a contradiction. 

    Otherwise, we have $\M((f_k)_\sharp \lb \Gamma_k \rb)\to 0$ and we conclude that the maximum of $\M((f_k)_\sharp \lb \Lambda\rb)$ over all components $\Lambda$ of $\bd \Omega_k$ goes to 0. Thus we can split $\bd \Omega_k$ into $\Lambda^k_1 \cup \Lambda_k^2$ where each $\Lambda_k^i$ is a union of components of $\bd \Omega_k$ for which $\M((f_k)_\sharp \lb \Lambda_k^i\rb)$ is at most  $\frac 1 2 \M((f_k)_\sharp \lb \bd \Omega_k\rb) + \eps$. After passing to a subsequence, we have $\Lambda_k^1 \to S_1$ and $\Lambda_k^2\to S_2$ as currents and by the same argument as before we must have $S_1=0$ and $S_2 = 0$. Again this contradicts that $(f_k)_\sharp\lb \bd \Omega_k\rb$ converges to $\lb \Sigma\rb $. 

    Thus we deduce that $(f_k)_\sharp\lb  \Gamma_k\rb$ converges to $\lb \Sigma\rb$ as currents. This implies that 
    \[
    \liminf \M((f_k)_\sharp \lb \Gamma_k\rb) \ge \area_g(\Sigma).
    \]
    But we also know that 
    \[
    \M((f_k)_\sharp \lb \Gamma_k\rb) \le \area_{g_k}(\Gamma_k) \le \area_{g_k}(\bd \Omega_k)\to \area_g(\Sigma)
    \]
    and hence $\M((f_k)_\sharp \lb \Gamma_k\rb)\to \area_g(\Sigma)$. This implies that $\vert (f_k)_\sharp \lb \Gamma_k\rb \vert\to \vert \Sigma\vert$ as varifolds, and the proposition is proved. 
\end{proof}

\subsection{Comparing Stability Inequalities} Finally, we can prove the main result. The idea is to compare the stability inequality on $\Gamma_k$ with the stability inequality on $\Sigma$. Since $\Sigma$ is a degenerate stable critical point for $\mathcal A^h_g$ with first eigenfunction 1 according to Proposition \ref{Proposition:DegenerateStable}, we have 
\[
\int_\Sigma \frac {R_g}{2} + \frac{\| \mathring A\|^2}{2} + \big(\vert \grad h\vert + \nu(h)\big) + \frac 3 4 h^2 - \vert \grad h\vert\, da = 4\pi. 
\]
Likewise, we can apply the stability inequality on $\Gamma_k$ with the test function 1 and use the fact that $\Gamma_k$ is connected to deduce that 
\[
\int_{\Gamma_k} \frac{R_{g_k}}{2} + \frac 3 4 (h\circ f_k)^2 + \nu_k(h\circ f_k)\, da_k \le 4\pi. 
\]
By our scaling procedure \eqref{eq:zoom} and the fact that $\area_{g_k}(\Gamma_k) \to \area_g(\Sigma)$, we see that 
\[
\left[ \int_{\Gamma_k} \frac{R_{g_k}}{2} da_k\right] - \left[\int_\Sigma \frac{R_g}{2} + \frac{\| \mathring A\|^2}{2} + \big(\vert \grad h\vert + \nu(h)\big)\, da\right] \ge c > 0
\]
where $c$ does not depend on $k$. Hence to get a contradiction it suffices to show that 
\[
\liminf \int_{\Gamma_k} \frac 3 4 (h\circ f_k)^2 + \nu_k(h\circ f_k)\, da_k \ge \int_\Sigma \frac 3 4 h^2 - \vert \grad h\vert\, da. 
\]
Observe that 
\begin{align*}
\int_{\Gamma_k} \frac 3 4 (h\circ f_k)^2 + \nu_k(h\circ f_k)\, da_k &\ge \int_{\Gamma_k} \frac 3 4 (h\circ f_k)^2 - \vert \grad^{g_k}(h\circ f_k)\vert\, da_k \\
&\ge \int_{\Gamma_k} \frac 3 4 h(f_k(x))^2 - \vert \grad^g h\vert(f_k(x))\, da_k,
\end{align*}
where we used the fact that $f_k$ is 1-Lipschitz to get from the first to the second line. Hence we just need to show that 
\[
\liminf \int_{\Gamma_k} \left(\frac 3 4 h^2 - \vert \grad h\vert\right)\circ f_k\, da_k \ge \int_\Sigma \frac 3 4 h^2 - \vert \grad h\vert\, da. 
\]
To see this, we split the integral into three pieces. Let $U$ be a small tubular neighborhood of $\Sigma$ on which 
\[
\frac 3 4 h^2 - \vert \grad h\vert \ge 0. 
\]
Let $V = \{1<d<1+\eta\}\cup \{3-\eta < d < 3\}$ where $\eta$ is chosen so that 
\[
\frac 3 4 h^2 - \vert \grad h\vert \ge 0 
\]
on $V$. 
Such choices of $U$ and $V$ are possible by the properties of $h$. Finally, let $W = \{1<d<3\}-(U\cup V)$. 

We make the following observations. First, since $\frac 3 4 h^2 - \vert \grad h\vert$ is bounded on $W$ and $\area_{g_k}(\Gamma_k - f_k^{-1}(W))\to 0$, we have 
\[
\int_{\Gamma_k \cap f_k^{-1}(W)} \left(\frac 3 4 h^2 - \vert \grad h\vert\right)\circ f_k\, da_k \to 0. 
\]
Second, by the choice of $V$, we have 
\[
\int_{\Gamma_k \cap f_k^{-1}(V)} \left(\frac 3 4 h^2 - \vert \grad h\vert\right)\circ f_k\, da_k \ge 0. 
\]
Third since $\frac 3 4 h^2 - \vert \grad h\vert \ge 0$ on $U$ and $f_k$ is 1-Lipschitz, we have 
\begin{align*}
    \int_{\Gamma_k \cap f_k^{-1}(U)} \left(\frac 3 4 h^2 - \vert \grad h\vert\right)\circ f_k \, da_k \ge \int_{\vert (f_k)_\sharp \lb \Gamma_k\rb \vert \cap U} \frac 3 4 h^2 - \vert \grad h\vert\, da. 
\end{align*}
Then by the varifold convergence $\vert (f_k)_\sharp \lb \Gamma_k\rb \vert \to \vert \Sigma\vert$, we have 
\[
\int_{\vert(f_k)_\sharp \lb \Gamma_k\rb \vert \cap U} \frac 3 4 h^2 - \vert \grad h\vert^2\, da \to \int_\Sigma \frac 3 4 h^2 - \vert \grad h\vert^2\, da. 
\]
Combining everything, we deduce that 
\[
\liminf \int_{\Gamma_k} \left(\frac 3 4 h^2 - \vert \grad h\vert\right)\circ f_k\, da_k \ge \int_\Sigma \frac 3 4 h^2 - \vert \grad h\vert\, da.
\]
This is a contradiction and the theorem follows.

\bibliographystyle{plain}
\bibliography{bibliography}

\end{document}